\documentclass[12pt,twoside]{amsart}

\usepackage{amsmath}
\usepackage{amsfonts}
\usepackage{amssymb,enumerate}
\usepackage{amsthm}
\usepackage[all]{xy}
\usepackage{hyperref}
\allowdisplaybreaks[4] \footskip=12pt
\renewcommand{\uppercasenonmath}[1]{}

\numberwithin{equation}{section} \theoremstyle{plain}
\newtheorem*{thm*}{Main Theorem}
\newtheorem{thm}{Theorem}[section]
\newtheorem{cor}[thm]{Corollary}
\newtheorem*{cor*}{Corollary}
\newtheorem{lem}[thm]{Lemma}
\newtheorem*{lem*}{Lemma}

\newtheorem*{fact*}{Fact}

\newtheorem*{nota*}{Notation}
\newtheorem{prop}[thm]{Proposition}
\newtheorem*{prop*}{Proposition}
\newtheorem{rem}[thm]{Remark}
\newtheorem*{rem*}{Remark}

\newtheorem*{observation*}{Observation}

\newtheorem*{exa*}{Example}
\newtheorem{df}[thm]{Definition}
\newtheorem*{df*}{Definition}

\newtheorem*{conj*}{Conjecture}

\newtheorem*{con*}{Construction}

\renewcommand{\geq}{\geqslant}
\renewcommand{\leq}{\leqslant}

\begin{document}
\begin{center}
{\large  \bf  Finitistic dimensions in triangulated categories with a compact silting generator}

\vspace{0.5cm}  Xiaoyan Yang\\
%\bigskip
School of Science, Zhejiang University of Science and Technology, Hangzhou 310023\\
E-mail: yangxy@zust.edu.cn
\end{center}

\bigskip
\centerline { \bf  Abstract}
%\bigskip
\leftskip10truemm \rightskip10truemm \noindent This paper studies finitistic dimensions in triangulated categories with a compact silting generator. We unify several notions of (big) finitistic dimension appearing in the literature, show that they are essentially equivalent, and relate them to the abelian heart when the generator is tilting. This yields a new categorical perspective on the finitistic dimension conjecture. Furthermore,
we establish explicit inequalities for (big) finitistic and global dimensions under recollements, demonstrating that finiteness in the middle category is equivalent to finiteness in the outer categories. These results generalize classical ring-theoretic theorems and apply to triangular matrix rings, exact contexts and trivial extensions.\\
\vbox to 0.3cm{}\\
{\it Key Words:} silting and tilting objects; projective dimension; (big) finitistic dimension; global dimension; recollement\\
{\it 2020 Mathematics Subject Classification:} 18G80; 18G20; 16E10

\leftskip0truemm \rightskip0truemm

\bigskip
\section{\bf Introduction}
In homological algebra and representation theory, global dimension and (big) finitistic dimension play crucial roles in understanding the structure and complexity of modules over algebras. A longstanding open problem is the \emph{finitistic dimension conjecture}, which asserts that every Artin algebra has finite finitistic dimension \cite{Ba}, where the finitistic dimension $\operatorname{fpd}\Lambda$ of an algebra $\Lambda$ is defined as the supremum of the projective dimensions of all
finitely generated modules having finite projective dimension. This conjecture implies several other homological conjectures, including the Auslander-Reiten conjecture, the Nakayama conjecture, the Wakamatsu tilting conjecture. Despite recent progress on  finitistic dimensions (\cite{CCZ,G,GI,Ri}),  the finitistic dimension conjecture remains open.

In light of the difficulty of directly settling this conjecture, researchers have shifted their focus from the classical module category to the more general framework of triangulated categories, aiming to reinterpret the finitistic dimension as an intrinsic property of such categories. To this end,
Krause \cite{K1} introduced a notion of \emph{finitistic dimension} for a Hom-finite triangulated category $\mathcal{S}$ via the generativity of objects, defined as
\[
\operatorname{fin.dim} \mathcal{S} := \inf \{\mathrm{sup}\{|n|\geq0\hspace{0.03cm}|\hspace{0.03cm}\mathrm{Hom}_\mathcal{T}(X,X[-n])\neq 0\}\hspace{0.03cm}|\hspace{0.03cm}X \text{ is a finitistic generator of } \mathcal{S} \},
\]and proved that for a ring $R$,
$\operatorname{fpd} R < \infty\Longleftrightarrow\operatorname{fin.dim} \operatorname{Perf}(R) < \infty$, where $\operatorname{Perf}(R)$ denote the category of perfect complexes over $R$.

A different notion, applicable to general triangulated categories, was later proposed by Biswas-Chen-Rahul-Parker-Zheng \cite{BCRPZ}. In their setting, finiteness serves as a necessary condition for the existence of a bounded $t$-structure and is related to the vanishing of the singularity category, thereby connecting to regularity conditions in algebraic geometry. This result vastly generalizes the conjecture of Antieau-Gepner-Heller on the regularity of schemes (see \cite[Conjecture 1.5]{AGH}). Building on these developments, Chen-Chen-Zhang \cite{CCZ} investigated (big) finitistic and global dimensions, establishing explicit inequalities that relate these dimensions of the middle category in a recollement of triangulated categories to those of the outer categories. More concretely, let $\mathcal{T}$ be a compactly generated triangulated category with a compact generator $M$ and $(\mathcal{T}^{\leq0},\mathcal{T}^{\geq0})$ the $t$-structure generated by $M$. Denote by $\mathcal{T}^\mathrm{b}$ and $\mathcal{T}^c$ the full subcategories
of $\mathcal{T}$ consisting of all bounded and compact objects, respectively.
The \emph{finitistic dimension} of $\mathcal{T}$ with respect to $M$ is defined as
$$\mathrm{fpd}(\mathcal{T},M):=\mathrm{sup}\{\mathrm{pdim}_M(X)
\hspace{0.03cm}|\hspace{0.03cm}
X\in\mathcal{T}^c\cap\mathcal{T}^\mathrm{b}\cap\mathcal{T}^{\geq0}\ \textrm{with}\ \mathrm{pdim}_M(X)<\infty\},$$where $\mathrm{pdim}_M(X):=\mathrm{inf}\{n\in\mathbb{Z}\hspace{0.03cm}|\hspace{0.03cm}
\mathrm{Hom}_\mathcal{T}(X,Y[i])=0\ \textrm{for\ all}\ Y\in\mathcal{T}^{\mathrm{b}}\cap\mathcal{T}^{\leq0}\ \textrm{and}\ i>n\}$. They also defined the \emph{finitistic dimension} of $\mathcal{T}^c$
at $M$ as
$$\mathrm{findim}(\mathcal{T}^c,M):=\mathrm{inf}\{n\in\mathbb{N}\hspace{0.03cm}|\hspace{0.03cm}
M(-\infty,-1]^{\bot}\cap\mathcal{T}^c \subseteq \langle M\rangle^{[0,+\infty)}[n]\},$$
where $M(-\infty,-1]^{\bot}=\{X\in\mathcal{T}\hspace{0.03cm}|\hspace{0.03cm}
\mathrm{Hom}_\mathcal{T}(M[n],X)=0\ \textrm{for}\ n\geq 1\}$.
Meanwhile, Kostas \cite{Ko} adopted a complementary perspective, defining the \emph{global dimension} $\mathrm{gl.dim}_{\mathcal{H}_M}\mathcal{T}$ of $\mathcal{T}$ relative to $\mathcal{H}_M:=\mathcal{T}^{\leq0}\cap\mathcal{T}^{\geq0}$ as the smallest $n$ so that $\mathrm{pdim}_M(X)\leq n$ for any $X\in\mathcal{H}_M$, and proved that the finiteness of this dimension is intrinsic to the triangulated category itself.

Taken together, these categorical perspectives not only unify the classical module-theoretic case but also extend naturally to geometric and derived settings, providing a broader framework for the study of homological dimensions. In this paper, we build on these insights and systematically investigates projective dimension, global dimension, and various (big) finitistic dimensions in triangulated categories equipped with a compact silting (or tilting) generator. Our first main result summarizes the relationships among the finitistic dimensions of $\mathcal{T}$ arising from different perspectives.

 \vspace{2mm} \noindent{\bf Theorem A}\label{Th1.4} {\rm(Theorems \ref{lem4.2} and \ref{lem4.13}).} {\it{Let $\mathcal{T}$ be a compactly generated triangulated category with compact silting generator
$M$. Then
 \begin{center}$\operatorname{fin.dim} \mathcal{T}^c\leq\mathrm{fpd}(\mathcal{T},M)
\leq\mathrm{findim}(\mathcal{T}^c,M)
\leq
 \mathrm{max}\{0,\mathrm{fpd}(\mathcal{T},M)\}\leq\mathrm{fpd}(\mathcal{T},M)-\mathrm{inf}M$.\end{center}In additional, if $M$ is tilting, then $\mathrm{fpd}(\mathcal{T},M)=\mathrm{fpd}(\mathcal{H}_M)=\mathrm{findim}(\mathcal{T}^c,M)$ and
$\mathrm{fpd}(\mathcal{T},M)<\infty\Longleftrightarrow\operatorname{fin.dim} \mathcal{T}^c<\infty$,  where $\mathrm{fpd}(\mathcal{H}_M)
=\mathrm{sup}\{\mathrm{projdim}_{\mathcal{H}_M}H\hspace{0.03cm}|\hspace{0.03cm}
H\in\mathcal{H}_M\ \textrm{has\ a\ resolution}\ 0\rightarrow P_n\rightarrow\cdots\rightarrow P_0\rightarrow M\rightarrow 0\ \textrm{in}\ \mathcal{H}_M\ \textrm{with}\ P_i\in\mathrm{add}(M)\}$.}}

 \vspace{2mm}
The theorem shows that these distinct notions of finitistic dimension are mutually bounded and coincide under the tilting condition. This offers a flexible framework for translating between different categorical settings. Moreover, it reframes the finitistic dimension conjecture: rather than relying on the detailed structure of a ring, this perspective is based on intrinsic properties of triangulated categories, such as generativity and $t$-structures. In the context of tilting theory,  the theorem implies that finiteness of the finitistic dimension is preserved under derived equivalence. Thus, these results unify existing notions of finitistic dimension, lift the classical conjecture to the triangulated level, and reduces the problem to the abelian heart $\mathcal{H}_M$ in the tilting case. We also obtained analogous results for various (big) finitistic dimensions (Theorem \ref{lem3.2}).

A natural extension of this categorical approach is the study of recollements, introduced by Beilinson-Bernstein-Deligne \cite{BBD} to decompose derived categories of sheaves into open and closed parts. Recollements of derived module categories can be viewed as short exact sequences that describe a derived module category in terms of a subcategory and a quotient, providing a useful framework for understanding connections among three algebraic or geometric objects. Our second main result builds on this framework by providing explicit inequalities for global dimension and (big) finitistic dimension under recollements of triangulated categories, thereby significantly generalizing classical ring-theoretic results on homological dimensions in the recollement setting.

\vspace{2mm} \noindent{\bf Theorem B}\label{Th1.4} {\rm(Theorems \ref{lem6.1}, \ref{lem7.1} and \ref{lem6.2}).} {\it{Let $\mathcal{R},\mathcal{S},\mathcal{T}$ be compactly generated triangulated categories with compact silting generators
$M_\mathcal{R},M_\mathcal{S},M_\mathcal{T}$ forming a recollement:
\begin{center}$\xymatrix@C=25pt@R=8pt{
\mathcal{ R}\ar[rr]|{i_\ast=i_!} &&\mathcal{S}\ar@<-1.5ex>[ll]_{i^\ast}\ar@<+1.5ex>[ll]^{i^!} \ar[rr]|{{j^!=j^\ast}} & &\mathcal{T}\ar@<-1.5ex>[ll]_{j_!} \ar@<+1.5ex>[ll]^{j_\ast} }$\end{center}Under the hypotheses that $\mathrm{pdim}_M(i_{*}(M_{\mathcal{R}}))<\infty$, we establish the following inequalities
\begin{itemize}
\item \textbf{(Big finitistic dimension)}
\[
\mathrm{FPD}(\mathcal{S}, M_{\mathcal{S}}) \leq \mathrm{FPD}(\mathcal{R}, M_{\mathcal{R}}) + \mathrm{FPD}(\mathcal{T}, M_{\mathcal{T}}) + C,
\]
where \( C=-\mathrm{inf}M_\mathcal{S}
+\operatorname{projdim}_{\mathcal{S}}i_*(M_\mathcal{R})+\mathrm{sup}i_\ast(M_\mathcal{R})
+\mathrm{projdim}_\mathcal{S}j_!(M_\mathcal{T})+\mathrm{sup}j_!(M_\mathcal{T})
+1 \). Moreover, \( \mathrm{FPD}(\mathcal{R}, M_{\mathcal{R}}) \) and \( \mathrm{FPD}(\mathcal{T}, M_{\mathcal{T}}) \) are each bounded by \( \mathrm{FPD}(\mathcal{S}, M_{\mathcal{S}}) \) plus explicit constants.

\item \textbf{(Finitistic dimension)} For the finitistic dimension \( \mathrm{fpd} \) defined via compact objects, similar bounds are obtained under the stronger assumption \( i_{*}(M_{\mathcal{R}}) \in \mathcal{S}^{c} \).

 \item \textbf{(Global dimension)} Analogous inequalities hold for \( \mathrm{gl.dim} \), leading to a characterization: \( \mathrm{gl.dim}\mathcal{S} < \infty\Longleftrightarrow \mathrm{gl.dim}\mathcal{R} < \infty \) and \( \mathrm{gl.dim}\mathcal{T} < \infty \).
\end{itemize}}}

\vspace{2mm}
These results unify and extend the classical recollement inequalities for rings due to Chen and Xi \cite{CX}. As applications, we recover and refine known bounds for homological dimensions of triangular matrix rings, idempotent recollements, exact contexts, and trivial extensions within this unified categorical framework.

Structure of the paper. Section 2 recalls necessary definitions and facts about compactly generated triangulated categories. Section 3 introduces projective and injective dimensions for objects in such categories, providing several equivalent characterizations via resolutions and the vanishing of functors.  In Section 4, we study various (big) finitistic dimensions, prove their relationships (Theorem A), and show that for a tilting generator $M$,
$\mathrm{FPD}(\mathcal{T},M)=\mathrm{FPD}(\mathcal{H}_M)$ (see Theorem \ref{lem3.2}). Section 5 contains our main results on how these dimensions behave under recollements (Theorem B), together with applications to (dg) rings.

\bigskip
\section{\bf Preliminaries}
This section is devoted to drawing some basic consequences
for use throughout this paper.  For terminology we shall follow \cite{BBD,CCZ,HKM,N0,Y1}.

 Let $\mathcal{T}$ be a triangulated category with the shift functor $[1]$. Given two full subcategories $\mathcal{X},\mathcal{Y}\subseteq\mathcal{T}$, denote by $\mathcal{X}\ast\mathcal{Y}$ the collection of objects $Z\in\mathcal{T}$ appearing
in an exact triangle $X\rightarrow Z\rightarrow Y\rightarrow X[1]$ with $X\in\mathcal{X}$ and $Y\in\mathcal{Y}$. For an interval $\Theta$ of integers, denote \begin{center}$\mathcal{X}^{\perp_\Theta}:=\{Y\in\mathcal{T}\hspace{0.02cm}|\hspace{0.02cm}\mathrm{Hom}_\mathcal{T}(X,Y[i])=0\ \textrm{for}\ X\in\mathcal{X}\ \textrm{and}\ i\in \Theta\},$\end{center} \begin{center}$^{\perp_\Theta}\mathcal{X}:=\{Y\in\mathcal{T}\hspace{0.02cm}|\hspace{0.02cm}\mathrm{Hom}_\mathcal{T}(Y[i],X)=0\ \textrm{for}\ X\in\mathcal{X}\ \textrm{and}\ i\in \Theta\},$\end{center}where $\Theta$ sometimes is represented by symbols $> 0$ or $\leq0$ and so on with the obvious meaning. The symbol $\perp$ stands for $\perp_0$.

\begin{df} {\rm (\cite{BBD})
Let  \( \mathcal{A} \subseteq \mathcal{T} \) be a subcategory and \( G \in \mathcal{T} \) an object.

(1) \(\operatorname{smd}(\mathcal{A})\) (resp. \(\operatorname{add}(\mathcal{A})\), \(\operatorname{Add}(\mathcal{A})\), \(\operatorname{Prod}(\mathcal{A})\) denotes the full subcategory of \( \mathcal{T} \) consisting of all direct summands (resp. finite direct sums, coproducts, products) of objects in \( \mathcal{A} \).

(2) For \( n > 0 \), define inductively:
\[
\operatorname{coprod}_1(\mathcal{A}) := \operatorname{add}(\mathcal{A}),\
\operatorname{coprod}_{n+1}(\mathcal{A}) := \operatorname{coprod}_1(\mathcal{A}) * \operatorname{coprod}_n(\mathcal{A}),
\]
\[
\operatorname{Coprod}_1(\mathcal{A}) := \operatorname{Add}(\mathcal{A}),\
\operatorname{Coprod}_{n+1}(\mathcal{A}) := \operatorname{Coprod}_1(\mathcal{A}) * \operatorname{Coprod}_n(\mathcal{A}).
\]

(3) Set \(\operatorname{coprod}(\mathcal{A}) := \bigcup_{n>0} \operatorname{coprod}_n(\mathcal{A})\), and \(\operatorname{Coprod}(\mathcal{A})\) denotes the smallest full subcategory of \( \mathcal{T} \) containing \( \mathcal{A} \) and closed under taking coproducts and extensions.

  (4) For integers \( s \leq t \), let
$G[s,t] := \{ G[-i]\hspace{0.03cm}|\hspace{0.03cm} \textrm{for}\ s \leq i \leq t \}$.
We extend this notation to allow \( s \) or \( t \) to be infinite; for example,
$G(-\infty, t] := \{ G[-i]\hspace{0.03cm}|\hspace{0.03cm} \textrm{for}\ i \leq t \}$.
For \( n > 0 \), let
\[
\langle G \rangle_n^{[s,t]} := \operatorname{smd}(\operatorname{coprod}_n(G[s, t])),\
\langle G \rangle^{[s,t]} := \bigcup_{n>0} \langle G \rangle_n^{[s,t]},\ \langle G \rangle_n := \langle G \rangle_n^{(-\infty,\infty)},\ \langle G \rangle:=\bigcup_{n>0}\langle G \rangle_n.
\]

\item Let \( s \leq t \) be integers (possibly infinite) and  $n>0$. Set
\[
\overline{\langle G \rangle}_n^{[s,t]} := \operatorname{smd}(\operatorname{Coprod}_n(G[s,t])),\
\overline{\langle G \rangle}^{[s,t]} := \operatorname{smd}(\operatorname{Coprod}(G[s,t]))
\]}
\end{df}

{\bf t-structures.} An object $C$ in a triangulated category $\mathcal{T}$ is called \emph{compact} if $\mathrm{Hom}_\mathcal{T}(C,-)$ commutes with all existing coproducts. Compact objects form a thick subcategory
of $\mathcal{T}$, denoted by $\mathcal{T}^c$. If $\mathcal{T}$ is cocomplete, $\mathcal{T}^c$ is skeletally small and $(\mathcal{T}^c)^\bot=0$, then we say that $\mathcal{T}$ is \emph{compactly generated}. In this case,
for $C\in\mathcal{T}^c$, there is a unique object
of $\mathcal{T}$, up to isomorphism, representing the functor
$$\mathrm{Hom}_\mathbb{Z}(\mathrm{Hom}_\mathcal{T}(C,-),\mathbb{Q}/\mathbb{Z}):\mathcal{T}^{\mathrm{op}}\rightarrow\mathrm{Ab},$$
denote by $C^\ast$, it is called the \emph{Brown-Comenetz dual} of $C$.

A pair $(\mathcal{U},\mathcal{V})$ of subcategories  in a triangulated category $\mathcal{T}$ is said to be a \emph{torsion pair} if $\mathcal{U}^\bot=\mathcal{V}, {^\bot}\mathcal{V}=\mathcal{U}$ and
$\mathcal{U}\ast\mathcal{V}=\mathcal{T}$. Such a torsion pair is said to be a \emph{t-structure} if $\mathcal{U}[1]\subseteq\mathcal{U}$. If $(\mathcal{U},\mathcal{V})$ is a t-structure, then $\mathcal{U}\cap\mathcal{V}[1]$ is an abelian category, called the \emph{heart} of $(\mathcal{U},\mathcal{V})$.
Let $\mathcal{M}$ be an additively closed subcategory of  $\mathcal{T}$, i.e. $\mathrm{Add}(\mathcal{M})=
\mathcal{M}$. We say that $\mathcal{M}$ is \emph{silting} if $(\mathcal{M}^{\bot_{>0}}, \mathcal{M}^{\bot_{\leq0}})$ is a $t$-structure in $\mathcal{T}$. An object $M$ in $\mathcal{T}$ is \emph{silting} if $\mathrm{Add}(M)$ is a silting subcategory. It follows
from \cite[Theorem 2.1]{HKM} that the pair $(M^{\bot_{>0}},M^{\bot_{\leq0}})$ is a t-structure of $\mathcal{T}$ whose  abelian heart is $\mathcal{H}_M:=M^{\bot_{\neq0}}$.
If $\mathcal{M}$ is a silting subcategory with $\mathrm{Hom}_\mathcal{T}(\mathcal{M},\mathcal{M}[\neq0])=0$, then  $\mathcal{M}$ is
\emph{tilting}.

Let $\mathcal{T}$ be a compactly generated triangulated category and $M$ a compact silting generator with abelian heart  $\mathcal{H}_M$. Then $\mathcal{T}$ admits set-indexed coproducts and products by the
Brown representability theorem. Denote $\mathcal{P}=\mathrm{Add}(M)$ and $\mathcal{I}=\mathrm{Prod}(M^\ast)$. For $n\geq0$, put $$\mathcal{T}^{< n+1}=\mathcal{T}^{\leq n}:=\mathcal{P}^{\perp_{>n}},\qquad  \mathcal{T}^{\geq n+1}=\mathcal{T}^{>n}:=\mathcal{P}^{\perp_{\leq n}}$$ and set $\mathcal{T}^{-}:=\bigcup_{n>0}\mathcal{T}^{\leq n},\ \mathcal{T}^{+}:=\bigcup_{n>0}\mathcal{T}^{\geq -n},\ \mathcal{T}^\mathrm{b}:=\mathcal{T}^-\cap\mathcal{T}^+$.
 Let
$\mathcal{H}^0_M(-)$ be the cohomological functor $\mathcal{T}\rightarrow\mathcal{H}_M$ and $\mathcal{H}^{-\ell}_M(-):=\mathcal{H}^0_M(-[\ell])$. For any object
$X\in\mathcal{T}$ and $\ell\in\mathbb{Z}$, the truncation functors
 $\tau^{\leq \ell}$ and $\tau^{\geq \ell}$ yield canonical exact triangles
$$\tau^{\leq \ell}(X)\rightarrow X\rightarrow \tau^{\geq \ell+1}(X)\rightarrow\tau^{\leq \ell}(X)[1],$$
$$\tau^{\leq \ell-1}(X)\rightarrow \tau^{\leq \ell}(X)\rightarrow\mathcal{H}^{\ell}_M(X)[-\ell]\rightarrow\tau^{\leq \ell-1}(X)[1],$$
$$\tau^{\geq\ell+1}(X)[-1]\rightarrow\mathcal{H}^{\ell}_M(X)[-\ell]\rightarrow\tau^{\geq\ell}(X)\rightarrow \tau^{\geq\ell+1}(X).$$
We also set $\mathrm{sup}X:=\mathrm{sup}\{\ell\in\mathbb{Z}\hspace{0.03cm}|\hspace{0.03cm}\mathcal{H}^\ell_M(X)\neq0\}$, $\mathrm{inf}X:=\mathrm{inf}\{\ell\in\mathbb{Z}\hspace{0.03cm}|\hspace{0.03cm}\mathcal{H}^\ell_M(X)\neq0\}$.

\begin{lem}\label{lem0.2} {\rm (\cite[Lemma 2.5]{Y1})} For $P\in\mathcal{P}$ and $I\in\mathcal{I}$, $X\in\mathcal{T}$ and $\ell\in\mathbb{Z}$, one has $$\mathrm{Hom}_\mathcal{T}(P[-\ell],X)\cong\mathrm{Hom}_{\mathcal{T}}(P,\mathcal{H}^\ell_M(X))\cong\mathrm{Hom}_{\mathcal{H}_M}(\mathcal{H}^0_M(P),\mathcal{H}^\ell_M(X)),$$
$$\mathrm{Hom}_\mathcal{T}(X,I[-\ell])\cong\mathrm{Hom}_{\mathcal{T}}(\mathcal{H}^\ell_M(X),I)\cong\mathrm{Hom}_{\mathcal{H}_M}(\mathcal{H}^\ell_M(X),\mathcal{H}^0_M(I)).$$
\end{lem}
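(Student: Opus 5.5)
The plan is to reduce both chains of isomorphisms to the truncation triangles and to the defining orthogonality properties of the $t$-structure $(\mathcal{T}^{\leq0},\mathcal{T}^{\geq0})=(M^{\bot_{>0}},M^{\bot_{\leq0}})$. Replacing $X$ by $X[\ell]$, and using $\mathcal{H}^\ell_M(X)=\mathcal{H}^0_M(X[\ell])$, we may assume $\ell=0$ throughout.

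\textbf{First chain.} Let $P\in\mathcal{P}=\mathrm{Add}(M)$. The key observations are the following.
\begin{enumerate}[(a)]
\item $P\in\mathcal{T}^{\leq0}$. This holds because $M$ is silting: $\mathrm{Hom}(M,M[i])=0$ for $i>0$, and this passes to coproducts in the first variable and to summands.
\item $\mathrm{Hom}(P,Y[i])=0$ for $i\ge0$ whenever $Y\in\mathcal{T}^{\geq1}=\mathcal{P}^{\perp_{\leq0}}$. This is immediate from the definition.
\end{enumerate}
Apply $\mathrm{Hom}(P,-)$ to the triangle $\tau^{\leq0}X\to X\to\tau^{\geq1}X\to$. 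The terms $\mathrm{Hom}(P,\tau^{\geq1}X)$ and $\mathrm{Hom}(P,\tau^{\geq1}X[-1])$ both vanish, the latter since $\tau^{\geq1}X[-1]\in\mathcal{T}^{\geq2}\subseteq\mathcal{T}^{\geq1}$. Hence $\mathrm{Hom}(P,X)\cong\mathrm{Hom}(P,\tau^{\leq0}X)$.

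Next apply $\mathrm{Hom}(P,-)$ to $\tau^{\leq-1}X\to\tau^{\leq0}X\to\mathcal{H}^0_M(X)\to$. Here $\mathrm{Hom}(P,\tau^{\leq-1}X)=0$ and $\mathrm{Hom}(P,\tau^{\leq-1}X[1])=0$, because $\tau^{\leq-1}X[1]\in\mathcal{T}^{\leq -2}$ and $\mathrm{Hom}(M,Z)=0$ for $Z\in\mathcal{P}^{\perp_{>-1}}$ shifted appropriately. Strictly, $\mathcal{T}^{\leq-1}=\mathcal{P}^{\perp_{>-1}}$ gives $\mathrm{Hom}(P,Z[i])=0$ for $i\geq0$. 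This yields $\mathrm{Hom}(P,\tau^{\leq0}X)\cong\mathrm{Hom}(P,\mathcal{H}^0_M(X))$.

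For the last isomorphism, the same argument with $X=P$ shows that the natural map $P\to\mathcal{H}^0_M(P)$ induces, for any $H\in\mathcal{H}_M$, a bijection $\mathrm{Hom}_{\mathcal{H}_M}(\mathcal{H}^0_MP,H)\cong\mathrm{Hom}(\tau^{\leq0}P,H)=\mathrm{Hom}(P,H)$. This is the standard adjunction: $\tau^{\geq0}$ is left adjoint to the inclusion of $\mathcal{T}^{\geq0}$, and $P=\tau^{\leq0}P$ by (a).

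\textbf{Second chain.} Here we use $I\in\mathcal{I}=\mathrm{Prod}(M^\ast)$. We need $I\in\mathcal{T}^{\geq0}$ and $\mathrm{Hom}(Y,I[i])=0$ for $Y\in\mathcal{T}^{\leq -1}$ and $i\le0$. Both follow from Brown–Comenetz duality. First, $\mathrm{Hom}(Y,M^\ast[i])\cong\mathrm{Hom}_\mathbb{Z}(\mathrm{Hom}(M,Y[-i]),\mathbb{Q}/\mathbb{Z})$, and this vanishes exactly when $\mathrm{Hom}(M,Y[-i])=0$. Second, products in the second variable preserve the vanishing. In particular, $\mathrm{Hom}(M[j],M^\ast)\cong\mathrm{Hom}(M,M[j])^\vee=0$ for $j>0$, which gives $M^\ast\in M^{\bot_{\leq0}}$ up to the indexing convention; the same formula shows $\mathrm{Hom}(\mathcal{T}^{\leq-1},M^\ast)=0$.

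The dual argument then goes as follows. Apply $\mathrm{Hom}(-,I)$ to the truncation triangles. The triangle $\tau^{\leq-1}X\to X\to\tau^{\geq0}X\to$ gives $\mathrm{Hom}(X,I)\cong\mathrm{Hom}(\tau^{\geq0}X,I)$. The triangle $\tau^{\geq1}X[-1]\to\mathcal{H}^0_M(X)\to\tau^{\geq0}X\to\tau^{\geq1}X$ gives $\mathrm{Hom}(\tau^{\geq0}X,I)\cong\mathrm{Hom}(\mathcal{H}^0_MX,I)$, using that $\mathrm{Hom}(\mathcal{T}^{\geq1},I)=0$ and $\mathrm{Hom}(\mathcal{T}^{\geq1}[-1],I)=0$. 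The second of these vanishing statements follows from the duality formula, since it amounts to $\mathrm{Hom}(M,Z)=0$ for $Z\in\mathcal{T}^{\geq1}$. The final isomorphism uses that $\tau^{\leq0}$ is right adjoint to the inclusion of $\mathcal{T}^{\leq0}$, together with $I=\tau^{\geq0}I$.

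\textbf{Main obstacle.} The main obstacle is bookkeeping. One must check the shift conventions $\mathcal{T}^{\leq n}=\mathcal{P}^{\perp_{>n}}$, so that $P\in\mathcal{T}^{\leq0}$ and $I\in\mathcal{T}^{\geq0}$ hold with exactly these indices. One must also verify the Brown–Comenetz vanishing $\mathrm{Hom}(\mathcal{T}^{\geq1},M^\ast)=0$ and $\mathrm{Hom}(\mathcal{T}^{\leq-1},M^\ast)=0$ in the correct degrees. I would settle this first by writing $\mathrm{Hom}(Z,M^\ast)\cong\mathrm{Hom}(M,Z)^\vee$ and reading off the vanishing directly from the definitions of $\mathcal{T}^{\leq n}$ and $\mathcal{T}^{\geq n}$.
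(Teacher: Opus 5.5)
Your proof is correct. The paper gives no proof of this lemma; it quotes it from the author's earlier paper (Lemma 2.5 there). Your argument is the standard one. You reduce to $\ell=0$, strip off $\tau^{\geq1}X$ and $\tau^{\leq-1}X$ using $\mathcal{T}^{\geq1}=\mathcal{P}^{\perp_{\leq0}}$ and $\mathcal{T}^{\leq-1}=\mathcal{P}^{\perp_{\geq0}}$, and finish with the truncation adjunctions. Dually, the Brown--Comenetz formula $\mathrm{Hom}_\mathcal{T}(Z,M^\ast)\cong\mathrm{Hom}_\mathbb{Z}(\mathrm{Hom}_\mathcal{T}(M,Z),\mathbb{Q}/\mathbb{Z})$ gives $\mathrm{Hom}_\mathcal{T}(\mathcal{T}^{\leq-1},\mathcal{I})=0=\mathrm{Hom}_\mathcal{T}(\mathcal{T}^{\geq1},\mathcal{I})$.

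Your normalization $\mathcal{H}^\ell_M(X)=\mathcal{H}^0_M(X[\ell])$ is the right one. It is forced by the triangle $\tau^{\leq\ell-1}X\to\tau^{\leq\ell}X\to\mathcal{H}^\ell_M(X)[-\ell]\to$. The paper's displayed definition of $\mathcal{H}^{-\ell}_M$ carries the opposite sign, evidently a typo, and with that sign the lemma would fail.

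There are two small slips to fix.

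First, in (a), showing $P\in\mathcal{P}^{\perp_{>0}}$ requires $\mathrm{Hom}_\mathcal{T}(M,M^{(\Lambda)}[i])=0$ for $i>0$. That means passing to coproducts in the \emph{second} variable, not the first, and this uses compactness of $M$. More simply, $\mathcal{P}\subseteq{}^{\perp}(\mathcal{P}^{\perp_{\leq0}})=\mathcal{T}^{\leq0}$ by the torsion-pair axiom. This route also derives $\mathrm{Hom}_\mathcal{T}(M,M[i])=0$ for $i>0$ from the paper's definition of silting, which only asks for a $t$-structure.

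Second, your duality computation shows $M^\ast\in M^{\perp_{<0}}=\mathcal{T}^{\geq0}$, not $M^{\perp_{\leq0}}=\mathcal{T}^{\geq1}$. Indeed $\mathrm{Hom}_\mathcal{T}(M,M^\ast)\cong\mathrm{Hom}_\mathbb{Z}(\mathrm{End}(M),\mathbb{Q}/\mathbb{Z})\neq0$ for $M\neq0$. The statement $I\in\mathcal{T}^{\geq0}$ is exactly what the last step of your second chain needs.
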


Let $X\in\mathcal{T}^{-}$. By Lemma \ref{lem0.2}, there is $f:P\rightarrow X$ with $P\in\mathcal{P}[-\mathrm{sup}X]$, such that $\mathcal{H}^{\mathrm{sup}X}_M(f)$ is surjective. We call $f$ a \emph{sppj morphism}.  A \emph{sppj resolution} $P_\bullet$ of $X$ is a sequence of exact triangles
\begin{align}
X_{n+1}\stackrel{g_{n+1}}\longrightarrow P_n\stackrel{f_n}\longrightarrow X_n\longrightarrow X_{n+1}[1]
\label{exact03}
\tag{$\dag_n$}\end{align}
such that $f_n$ is a sppj morphism for $n\geq 0$ with $X_0=X$. Then $\mathrm{sup}X_{n+1}\leq \mathrm{sup}X_n$ for all $n\geq0$.  Set $d_n=g_{n}f_n:P_n\rightarrow P_{n-1}$, and write $$\cdots\longrightarrow P_n\stackrel{d_n}\longrightarrow P_{n-1}\longrightarrow\cdots\longrightarrow P_1\stackrel{d_1}\longrightarrow P_0\longrightarrow X.$$
Similarly,  for $Y\in\mathcal{T}^{+}$, there exists $g:Y\rightarrow I$ with $I\in\mathcal{I}[-\mathrm{inf}Y]$ such that $\mathcal{H}^{\mathrm{inf}Y}_M(g)$ is injective.  We call $g$ an \emph{ifij morphism}.
 An \emph{ifij coresolution} $I^\bullet$ of $Y$ is a sequence of exact triangles
\begin{align}
Y^{n}\stackrel{f^{n}}\longrightarrow I^n\stackrel{g^{n+1}}\longrightarrow Y^{n+1}\longrightarrow Y^{n}[1]
\label{exact03}
\tag{$\dag^n$}\end{align}
such that $f^n$ is an ifij morphism for $n\geq 0$ with $Y^0=Y$. Then $\mathrm{inf}Y^{n+1}\geq \mathrm{inf}Y^n$  for all $n\geq0$.  Set $d^n=f^{n+1}g^{n+1}:I^n\rightarrow I^{n+1}$, and write $$Y\longrightarrow I^0\stackrel{d^0}\longrightarrow I^{1}\longrightarrow\cdots\longrightarrow I^n\stackrel{d^n}\longrightarrow I^{n+1}\longrightarrow \cdots.$$

\begin{rem}\label{lem:0.1} {\rm (1) Let $X\in\mathcal{T}^{-}$ and $f:P'\rightarrow X$ be a sppj morphism. One has an exact triangle $Z'\stackrel{b}\rightarrow P'\stackrel{a}\rightarrow X\rightarrow Y'[1]$. So the following triangle $$Z'\oplus P'\stackrel{\left[\begin{smallmatrix}b &0\\ 0 &1\end{smallmatrix}\right]}\longrightarrow P'\oplus P'\stackrel{[a,0]}\longrightarrow X\longrightarrow (Y'\oplus P')[1]$$is exact. Set $P=P'\oplus P'$ and $Z=Z'\oplus P'$. Then $\mathrm{sup}Z=\mathrm{sup}X$ and $f=[a,0]$ is a sppj morphism, that is,
there is an exact triangle $Z\rightarrow P\stackrel{f}\rightarrow X\rightarrow Y[1]$, such that $g$ is a sppj morphism and $\mathrm{sup}Y=\mathrm{sup}X$. Similarly, for $Y\in\mathcal{T}^{+}$, there exists an exact triangle $Y\stackrel{g}\rightarrow I\rightarrow W\rightarrow Y[1]$, such that $g$ is an ifij morphism and $\mathrm{inf}Y=\mathrm{inf}W$.

(2) It follows from Lemma \ref{lem0.2} that $\mathrm{inf}M^\ast=0$ and $\mathrm{sup}M^\ast=-\mathrm{inf}M$. In fact, $M^\ast$ is cosilting, i.e., $(^{\bot_{\geq0}}M^\ast,^{\bot_{<0}}M^\ast)$ is a $t$-structure.}
\end{rem}

\bigskip
\section {\bf Projective and injective dimensions for objects in $\mathcal{T}$}
We work in a fixed compactly generated triangulated category $\mathcal{T}$ with a compact silting generator $M$ and associated heart $\mathcal{H}_M$. This section introduces projective and injective dimensions for objects in $\mathcal{T}$ and provides various criteria for recognizing them, for the convenience of later discussion.

\begin{df}\label{lem:2.2}{\rm (1) The \emph{projective dimension} $\mathrm{projdim}_\mathcal{T}X$ of an object $X\in\mathcal{T}$ is defined as $$\mathrm{projdim}_\mathcal{T}X=\mathrm{inf}\{n\in\mathbb{Z}\hspace{0.03cm}|\hspace{0.03cm}
\mathrm{Hom}_\mathcal{T}(X,U[i])=0\ \textrm{for\ all}\ U\in\mathcal{T}^{-}\ \textrm{and}\ i>n+\mathrm{sup}U\}.$$

(2) The \emph{injective dimension}  $\mathrm{injdim}_\mathcal{T}Y$ of an object $Y\in\mathcal{T}$  is defined as $$\mathrm{injdim}_\mathcal{T}Y=\mathrm{sup}\{n\in\mathbb{Z}\hspace{0.03cm}|\hspace{0.03cm}
\mathrm{Hom}_\mathcal{T}(V[i],Y)=0\ \textrm{for\ all}\ V\in\mathcal{T}^{+}\ \textrm{and}\ i<-n+\mathrm{inf}V\}.$$}
\end{df}

We now establish some useful criteria for the projective dimensions of objects.

\begin{prop}\label{lem2.3} Let $X\in\mathcal{T}$ and $n\in\mathbb{Z}$. The following are equivalent:

$(1)$ $\mathrm{projdim}_\mathcal{T}X\leq n$;

$(2)$ $\mathrm{Hom}_\mathcal{T}(X,U[i])=0$ for all $U\in\mathcal{T}^{\mathrm{b}}$ and $i>n+\mathrm{sup}U$;

$(3)$ $\mathrm{Hom}_\mathcal{T}(X,\mathcal{H}_M[i])=0$ for all $i>n$.\\
In particular, $\mathrm{projdim}_\mathcal{T}X=\mathrm{sup}\{n\in\mathbb{Z}\hspace{0.03cm}|\hspace{0.03cm}
\mathrm{Hom}_\mathcal{T}(X,\mathcal{H}_M[n])\neq0\}$. If $\mathrm{projdim}_\mathcal{T}X<\infty$, then
$\mathrm{projdim}_\mathcal{T}X=\mathrm{sup}\{n\in\mathbb{Z}\hspace{0.03cm}|\hspace{0.03cm}
\mathrm{Hom}_\mathcal{T}(X,\mathcal{P}[n])\neq0\}$.
\end{prop}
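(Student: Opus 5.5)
We prove the cycle $(1)\Rightarrow(2)\Rightarrow(3)\Rightarrow(1)$ and then read off the two formulas.

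The implication $(1)\Rightarrow(2)$ is immediate, because $\mathcal{T}^{\mathrm{b}}\subseteq\mathcal{T}^{-}$. For $(2)\Rightarrow(3)$, take $H\in\mathcal{H}_M$ with $H\neq0$. Then $H\in\mathcal{T}^{\mathrm{b}}$ and $\mathrm{sup}H=0$, so (2) gives $\mathrm{Hom}_\mathcal{T}(X,H[i])=0$ for all $i>n$. (If $H=0$ there is nothing to prove.)

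The substantive step is $(3)\Rightarrow(1)$. Fix $U\in\mathcal{T}^{-}$ and set $s=\mathrm{sup}U$; we may assume $s$ is finite, since $U\neq0$ and the silting $t$-structure is nondegenerate. We must show $\mathrm{Hom}_\mathcal{T}(X,U[i])=0$ for $i>n+s$. I would proceed in three stages.
\begin{enumerate}
\item \emph{Bounded objects.} First treat $U\in\mathcal{T}^{\mathrm{b}}$ by induction on the number of nonzero cohomologies, using the truncation triangles
\[
\tau^{\leq \ell-1}U\rightarrow \tau^{\leq \ell}U\rightarrow \mathcal{H}^{\ell}_M(U)[-\ell]\rightarrow .
\]
Each factor $\mathcal{H}^\ell_M(U)[-\ell]$ with $\ell\leq s$ satisfies $\mathrm{Hom}_\mathcal{T}(X,\mathcal{H}^\ell_M(U)[i-\ell])=0$ whenever $i-\ell>n$. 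This holds because $i>n+s\geq n+\ell$. The long exact sequence then gives the vanishing for $U$.
\item \emph{Splitting off the bounded part.} For general $U\in\mathcal{T}^{-}$, fix $i>n+s$ and choose $m$ with $m\ll s$. Use the triangle
\[
\tau^{\leq m}U\rightarrow U\rightarrow \tau^{\geq m+1}U\rightarrow .
\]
The object $\tau^{\geq m+1}U$ is bounded with $\mathrm{sup}\leq s$, so stage 1 kills its contribution. It then suffices to show $\mathrm{Hom}_\mathcal{T}(X,(\tau^{\leq m}U)[i])=0$ for $m$ sufficiently negative.
\item \emph{The unbounded tail.} This is the main obstacle, since (3) only controls bounded objects and $X$ is arbitrary rather than compact. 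I would first prove that (3) forces $X\in\mathcal{T}^{-}$, so that $X$ is ``bounded above'' relative to $M$. The point is that $\mathrm{Hom}_\mathcal{T}(X,-)$ vanishing on deep shifts of the heart should give $\mathcal{H}^\ell_M$-type vanishing via Lemma \ref{lem0.2}, applied with the cosilting object $M^\ast$ (Remark \ref{lem:0.1}(2)). Concretely, $\mathrm{Hom}_\mathcal{T}(X,I[-\ell])\cong\mathrm{Hom}_{\mathcal{H}_M}(\mathcal{H}^\ell_M(X),\mathcal{H}^0_M(I))$. Since $\mathcal{H}^0_M(M^\ast)$ is an injective cogenerator of $\mathcal{H}_M$, and $M^\ast$ is bounded with amplitude $-\mathrm{inf}M$, the vanishing from stage 1 yields $\mathcal{H}^\ell_M(X)=0$ for $\ell\ll0$. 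That is, $X\in\mathcal{T}^{\geq -N}$ for some $N$. Then $\mathrm{Hom}_\mathcal{T}(X,(\tau^{\leq m}U)[i])=0$ as soon as $m+(-i)<-N$, because $\tau^{\leq m}U[i]\in\mathcal{T}^{\leq m-i}$ and $(\mathcal{T}^{\leq m-i},\mathcal{T}^{>m-i})$ is a torsion pair. So choosing $m$ small enough finishes (1). If the injective-cogenerator argument proves delicate, my fallback is to describe $\tau^{\leq m}U$ by a homotopy colimit or sppj-type resolution. The risk there is that $X$ need not be compact, so I would insist on the route through $M^\ast$.
\end{enumerate}

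For the formulas, the equivalence $(1)\Leftrightarrow(3)$ says $\mathrm{projdim}_\mathcal{T}X$ is the least $n$ such that $\mathrm{Hom}_\mathcal{T}(X,\mathcal{H}_M[i])=0$ for all $i>n$. This is exactly $\mathrm{sup}\{n\mid \mathrm{Hom}_\mathcal{T}(X,\mathcal{H}_M[n])\neq0\}$.

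For the $\mathcal{P}$-version, suppose $d=\mathrm{projdim}_\mathcal{T}X<\infty$. We show both inequalities.
\begin{itemize}
\item \emph{Upper bound.} Objects $P\in\mathcal{P}$ lie in $\mathcal{T}^{-}$ with $\mathrm{sup}P\leq0$, so (1) gives $\mathrm{Hom}_\mathcal{T}(X,P[i])=0$ for all $i>d$.
\item \emph{Attained at $d$.} Pick a nonzero map $X\rightarrow H[d]$ with $H\in\mathcal{H}_M$. Take a sppj triangle $Z\rightarrow P\rightarrow H\rightarrow$ with $P\in\mathcal{P}$; by Lemma \ref{lem0.2}, $\mathrm{sup}Z\leq0$. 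Then $\mathrm{Hom}_\mathcal{T}(X,Z[d+1])=0$ by (1), so $X\rightarrow H[d]$ lifts to a nonzero map $X\rightarrow P[d]$.
\end{itemize}
Hence $\mathrm{Hom}_\mathcal{T}(X,\mathcal{P}[d])\neq0$, and the supremum equals $d$.
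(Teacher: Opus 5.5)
\paragraph{Verdict.} The proposal has a genuine gap in stage 3, and it is fatal as written. The rest is correct: the trivial implications, stages 1 and 2, and the lifting argument for the $\mathcal{P}$-formula. Stage 1 is exactly the paper's induction for $(3)\Rightarrow(2)$, and the lift through a sppj triangle is the paper's argument.

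\paragraph{The error in stage 3.} Your last step uses the $t$-structure orthogonality in the wrong direction. The torsion pair $(\mathcal{T}^{\leq k},\mathcal{T}^{>k})$ gives $\mathrm{Hom}_\mathcal{T}(\mathcal{T}^{\leq k},\mathcal{T}^{>k})=0$. It says nothing about maps \emph{from} $X\in\mathcal{T}^{>k}$ \emph{to} $(\tau^{\leq m}U)[i]\in\mathcal{T}^{\leq k}$. The vanishing you are implicitly using amounts to $\mathrm{Hom}_\mathcal{T}(\mathcal{T}^{\geq 0},\mathcal{T}^{\leq -1})=0$. That would give $\mathrm{Hom}_\mathcal{T}(H',H[i])=0$ for all $H,H'\in\mathcal{H}_M$ and $i\geq 1$; for $\mathcal{T}=\mathrm{D}(R)$ and $M=R$ it would make every $R$-module projective. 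Maps from $X$ into objects deep in the aisle are exactly the high-degree Ext groups that projective dimension measures, so no boundedness property of $X$ can dispose of them. There are two further problems. Bounding $X$ via $M^\ast$ needs $M^\ast\in\mathcal{T}^{\mathrm{b}}$, i.e.\ $\mathrm{inf}M>-\infty$, which is not assumed (compare Remark \ref{lem:2.1}(2)). And that argument yields $X\in\mathcal{T}^{+}$, not the announced $X\in\mathcal{T}^{-}$.

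\paragraph{What is missing.} You need a limit argument that uses (3) in \emph{every} degree to control the tail. One option is to show $U\cong\mathrm{holim}_k\,\tau^{\geq -k}U$.
\begin{itemize}
\item Build compatible maps $U\to\tau^{\geq -k}U$ and test the comparison map with $\mathrm{Hom}_\mathcal{T}(M[j],-)$. By Lemma \ref{lem0.2} its values on $\tau^{\geq -k}U$ are eventually constant, so the Milnor sequence shows the comparison map is invertible, since $M$ generates.
\item Applying $\mathrm{Hom}_\mathcal{T}(X,-)$ to the triangle defining the homotopy limit gives, for arbitrary $X$,
\[
0\rightarrow\mathrm{lim}^1\,\mathrm{Hom}_\mathcal{T}(X,\tau^{\geq -k}U[i-1])\rightarrow\mathrm{Hom}_\mathcal{T}(X,U[i])\rightarrow\mathrm{lim}\,\mathrm{Hom}_\mathcal{T}(X,\tau^{\geq -k}U[i])\rightarrow 0.
\]
\item For $i>n+s$ the right-hand term vanishes by your stage 1.
\item For $k\geq -s-1$ the transition maps of the $\mathrm{lim}^1$ system are surjective. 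Their cokernels embed in $\mathrm{Hom}_\mathcal{T}(X,\mathcal{H}^{-k-1}_M(U)[i+k+1])$, which is zero by (3) since $i+k+1>n$.
\end{itemize}
Hence $\mathrm{Hom}_\mathcal{T}(X,U[i])=0$. This replaces your stages 2 and 3. The paper itself does not spell this step out; it defers $(2)\Rightarrow(1)$ to the argument of Theorem 3.3 in Yang's earlier paper.
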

\begin{proof} (1) $\Rightarrow$ (2) $\Rightarrow$ (3) are trivial.
(3) $\Rightarrow$ (2) By the induction on $\mathrm{sup}U-\mathrm{inf}U$.

(2) $\Rightarrow$ (1) By analogy with the proof of (3) $\Rightarrow$ (2) in \cite[Theorem 3.3]{Y1}.

 If $\mathrm{projdim}_\mathcal{T}X=n$, then $\mathrm{Hom}_\mathcal{T}(X,H[n])\neq0$ for some $H\in\mathcal{H}_M$. Consider the exact triangle $H'\rightarrow P\stackrel{f}\rightarrow H\rightarrow H'[1]$ with $f$ a sppj morphism. As $\mathrm{sup}H'\leq0$, $\mathrm{Hom}_\mathcal{T}(X,H'[n+1])=0$, it follows from the exact sequence $\mathrm{Hom}_\mathcal{T}(X,P[n])\rightarrow\mathrm{Hom}_\mathcal{T}(X,H[n])\rightarrow0$ that $\mathrm{Hom}_\mathcal{T}(X,P[n])\neq0$. Conversely, if $\mathrm{projdim}_\mathcal{T}X=m> n$, then $\mathrm{Hom}_\mathcal{T}(X,\mathcal{H}_M[m])\neq0$, so
  $\mathrm{Hom}_\mathcal{T}(X,\mathcal{P}[m])\neq0$ by the preceding proof, a contradiction, it means that $\mathrm{projdim}_\mathcal{T}X\leq n$. Also $\mathrm{Hom}_\mathcal{T}(X,P[n])\neq0$ for some $P\in\mathcal{P}$, one has
 $\mathrm{projdim}_\mathcal{T}X=n$.
\end{proof}

\begin{rem}\label{lem:2.1}{\rm  (1) Let $X\in\mathcal{T}$ and set $s=\mathrm{sup}X$. If $s=\infty$, then $\mathrm{projdim}_\mathcal{T}X\geq-s$. Assume that $s<\infty$.
Consider the following exact triangle $$\tau^{\geq1}(M^\ast)[-1]\rightarrow\mathcal{H}^{0}_M(M^\ast)\rightarrow M^\ast\rightarrow \tau^{\geq1}(M^\ast).$$ As $X[s]\in\mathcal{T}^{\leq0}$ and $M^\ast\in\mathcal{T}^{\geq0}$,
$\mathrm{Hom}_\mathcal{T}(X[s],\tau^{\geq1}(M^\ast)[-1])=0
=\mathrm{Hom}_\mathcal{T}(X[s],\tau^{\geq1}(M^\ast))$, so
$\mathrm{Hom}_\mathcal{T}(X[s],\mathcal{H}^{0}_M(M^\ast))\cong
\mathrm{Hom}_\mathcal{T}(X[s],M^\ast)
\cong\mathrm{Hom}_\mathcal{T}(\mathcal{H}^{s}_M(X),M^\ast)\neq0$ by \cite[Theorem 1.3]{HKM} and Lemma \ref{lem0.2}. Hence $\mathrm{projdim}_\mathcal{T}X\geq-\mathrm{sup}X$ by Proposition \ref{lem2.3}.

(2) Let $\mathrm{projdim}_\mathcal{T}X=n$. If $\mathrm{inf}M=-\infty$, then $\mathrm{inf}X\geq\mathrm{inf}M-\mathrm{projdim}_\mathcal{T}X$.
If $M\in\mathcal{T}^\mathrm{b}$, then $M^\ast\in\mathcal{T}^\mathrm{b}$. So $\mathrm{Hom}_\mathbb{Z}(\mathrm{Hom}_\mathcal{T}(M,X[-i]),\mathbb{Q}/\mathbb{Z})
 \cong\mathrm{Hom}_\mathcal{T}(X,M^\ast[i])=0$ for $i>n+\mathrm{sup}M^\ast$, it follows from Lemma \ref{lem0.2} that $\mathrm{Hom}_\mathcal{T}(M,\mathcal{H}^{-i}_M(X))\cong
 \mathrm{Hom}_\mathcal{T}(M[i],X)=0$ for $i>n-\mathrm{inf}M$. Hence $\mathrm{inf}X\geq\mathrm{inf}M-\mathrm{projdim}_\mathcal{T}X$.

 (3) Let $X\in\mathcal{T}$ and $n\in\mathbb{Z}$. Then
 $\mathrm{projdim}_\mathcal{T}X[n]=\mathrm{projdim}_\mathcal{T}X+n$. By Proposition \ref{lem2.3},
 our $\mathrm{projdim}_\mathcal{T}X$ coincides with the projective dimension with respect to $M$ (denoted $\mathrm{pdim}_M(X)$ in \cite{BCRPZ}) and also with the projective dimension relative to $\mathcal{H}_M$ (denoted
$\mathrm{pdim}_{\mathcal{H}_M}X$ in \cite{Ko}).

(4) Let $X\in\mathcal{T}^-$. Then $\mathrm{projdim}_\mathcal{T}X+\mathrm{sup}X$  coincides with the projective dimension $\mathrm{pd}_\mathcal{T}X$ defined in \cite[Definition 3.1]{Y1}. Let $X'\rightarrow P\stackrel{f}\rightarrow X\rightarrow X'[1]$ be an exact triangle with $f$ a sppj morphism. If $\mathrm{projdim}_\mathcal{T}X+\mathrm{sup}X\geq1$, then $\mathrm{projdim}_\mathcal{T}X'=\mathrm{projdim}_\mathcal{T}X-1$ by \cite[Proposition 3.6]{Y1}.}
\end{rem}

The next result characterizes the projective dimension through sppj resolution, generalizing \cite[Theorem 2.22]{Mi18} and \cite[Proposition 2.15]{Ko}.

\begin{prop}\label{lem2.5} Let $X\in\mathcal{T}^{-}$ and $d\geq0$. The following are equivalent:

$(1)$ $\mathrm{projdim}_\mathcal{T}X+\mathrm{sup}X=d$.

$(2)$ For any sppj resolution $P_\bullet$ of $X$, there exists $e\geq0$ such that $P_n=0$ for $n>e$ and $e+\mathrm{sup}P_0-\mathrm{sup}P_e=d$ and the triangle $X_{e}\rightarrow P_{e-1}\rightarrow X_{e-1}\rightarrow X_{e}[1]$ is not split;

$(3)$ $X$ has a sppj resolution of length $d$, such that $\mathrm{sup}P_i=\mathrm{sup}X$ for $0\leq i\leq d$ and the triangle $X_{d}\rightarrow P_{d-1}\rightarrow X_{d-1}\rightarrow X_{d}[1]$ is not split;

$(4)$ $d$ is the smallest number which satisfies $X\in\mathcal{P}[-\mathrm{sup}X]\ast\cdots\ast\mathcal{P}[-\mathrm{sup}X+d]$.
\end{prop}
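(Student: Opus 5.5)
Throughout, write $s=\mathrm{sup}X$ and $\mathrm{pd}(Y):=\mathrm{projdim}_\mathcal{T}Y+\mathrm{sup}Y$ for $Y\in\mathcal{T}^-$. By Remark \ref{lem:2.1}(1), $\mathrm{pd}(Y)\geq 0$. The engine of the proof is Remark \ref{lem:2.1}(4): along a sppj resolution $P_\bullet$ one has $\mathrm{projdim}_\mathcal{T}X_{n+1}=\mathrm{projdim}_\mathcal{T}X_n-1$ as long as $\mathrm{pd}(X_n)\geq1$. Moreover, since $f_n$ is sppj, $P_n\in\mathcal{P}[-\mathrm{sup}X_n]$, so $\mathrm{sup}P_n=\mathrm{sup}X_n$ whenever $X_n\neq 0$. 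I will prove $(1)\Rightarrow(2)\Rightarrow(3)\Rightarrow(4)\Rightarrow(1)$.

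\emph{$(1)\Rightarrow(2)$.} Fix a sppj resolution. By Remark \ref{lem:2.1}(4), $\mathrm{projdim}_\mathcal{T}X_n=\mathrm{projdim}_\mathcal{T}X-n$ while $\mathrm{pd}(X_{n-1})\geq1$. Since $\mathrm{sup}X_n\leq s$, we get
\[
\mathrm{pd}(X_n)\leq d-n+(\mathrm{sup}X_n-s)\leq d-n.
\]
Hence there is a first index $e\leq d$ with $\mathrm{pd}(X_e)=0$.

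The key claim, which I expect to be the main obstacle, is that $\mathrm{pd}(Y)=0$ forces $Y\in\mathcal{P}[-\mathrm{sup}Y]$. To see this, take the sppj triangle $Y'\rightarrow P\rightarrow Y\rightarrow Y'[1]$. Then $\mathrm{sup}(Y'[1])\leq\mathrm{sup}Y-1$. Since $\mathrm{projdim}_\mathcal{T}Y=-\mathrm{sup}Y$, Definition \ref{lem:2.2}(1) gives $\mathrm{Hom}_\mathcal{T}(Y,Y'[1])=0$, because $1>-\mathrm{sup}Y+\mathrm{sup}(Y'[1])$ after a shift. So the triangle splits and $Y$ is a summand of $P$, hence lies in $\mathcal{P}[-\mathrm{sup}Y]$ because $\mathrm{Add}$ is closed under summands. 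In particular $X_e$ is itself projective, so the resolution may be taken to stop with $P_e=X_e$ and $P_n=0$ for $n>e$. This is the sense in which ``any'' resolution terminates; I would phrase it this way, since the cone of an arbitrary sppj map could be nonzero but split.

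Then
\[
d=\mathrm{projdim}_\mathcal{T}X+s=(\mathrm{projdim}_\mathcal{T}X_e+e)+s=e-\mathrm{sup}X_e+s=e+\mathrm{sup}P_0-\mathrm{sup}P_e.
\]
For non-splitness: if the triangle at stage $e-1$ split, then $X_{e-1}$ would be a summand of $P_{e-1}$, giving $\mathrm{pd}(X_{e-1})=0$ and contradicting the minimality of $e$.

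\emph{$(2)\Rightarrow(3)$.} Use the doubling trick of Remark \ref{lem:0.1}(1) at each step to build a sppj resolution with $\mathrm{sup}X_{n+1}=\mathrm{sup}X_n=s$. This requires knowing $d$ is intrinsic, which follows since (2) applied to this resolution gives $e=d$ because $\mathrm{sup}P_0=\mathrm{sup}P_e$. Non-splitness again comes from (2).

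\emph{$(3)\Rightarrow(4)$.} Read the triangles $X_{n+1}\rightarrow P_n\rightarrow X_n\rightarrow$ backwards. By descending induction, $X_n\in\mathcal{P}[-s+n]\ast\cdots\ast\mathcal{P}[-s+d]$, using $X_n\in P_n\ast X_{n+1}[1]$ and $P_n\in\mathcal{P}[-s]$ shifted appropriately. This gives membership of $X$ with length $d$.

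For minimality, suppose $X\in\mathcal{P}[-s]\ast\cdots\ast\mathcal{P}[-s+d']$. Since $\mathrm{Hom}_\mathcal{T}(\mathcal{P}[-s+j],H[i])=0$ for $H\in\mathcal{H}_M$ and $i\neq s-j$, we get $\mathrm{Hom}_\mathcal{T}(X,\mathcal{H}_M[i])=0$ for $i>d'-s$. Hence $\mathrm{projdim}_\mathcal{T}X\leq d'-s$ by Proposition \ref{lem2.3}, i.e. $d'\geq\mathrm{pd}(X)$. The computation in the first step gives $\mathrm{pd}(X)=d$, so $d'\geq d$.

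\emph{$(4)\Rightarrow(1)$.} The same Hom-vanishing argument gives $\mathrm{pd}(X)\leq d$. Conversely, running $(1)\Rightarrow(3)\Rightarrow(4)$ for the value $\mathrm{pd}(X)$ shows that $X$ lies in the filtration of length $\mathrm{pd}(X)$. Minimality of $d$ then forces $d\leq\mathrm{pd}(X)$, so $\mathrm{pd}(X)=d$.
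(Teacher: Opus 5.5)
Your argument for $(1)\Rightarrow(2)\Rightarrow(3)$ is essentially the paper's. The paper inducts on $d$ via Remark~\ref{lem:2.1}(4), and its base case $d=0$ is exactly your splitting claim. Your $(2)\Rightarrow(3)$ uses the same doubling argument from Remark~\ref{lem:0.1}(1). Your caveat about (2) is correct and worth keeping: read literally, (2) is false. For $X=M$, the maps $f_n=[1,0]\colon M\oplus M\to M$ give a sppj resolution with $P_n\neq 0$ for all $n$; this is just Remark~\ref{lem:0.1}(1) iterated. The paper's induction tacitly truncates at the first split stage, which you make explicit.

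The one real omission is in $(3)\Rightarrow(4)$. First, two slips:
\begin{itemize}
\item The descending induction should read $X_n[n]\in\mathcal{P}[-s+n]\ast\cdots\ast\mathcal{P}[-s+d]$; as written it fails at $n=d$.
\item $\mathrm{Hom}_\mathcal{T}(\mathcal{P}[-s+j],\mathcal{H}_M[i])$ vanishes for $i\neq j-s$, not $i\neq s-j$.
\end{itemize}
The substantive issue is minimality. You invoke ``the computation in the first step'' to get $\mathrm{projdim}_\mathcal{T}X+s=d$. But that computation assumed (1), and it needed $d$ to be the \emph{first} index with $\mathrm{pd}(X_n)=0$ along the given resolution. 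In $(1)\Rightarrow(2)$ you only proved ``$e$ minimal $\Rightarrow$ the stage-$(e-1)$ triangle is non-split''; here you need the converse.

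The converse is short. In (3), $\mathrm{sup}X_n=\mathrm{sup}P_n=s$ for $n\leq d$. Suppose $\mathrm{pd}(X_n)=0$ for some $n<d$. Then $X_n\in\mathcal{P}[-s]$, so $\mathrm{Hom}_\mathcal{T}(X_n,X_{n+1}[1])=0$ and the triangle splits. Hence $X_{n+1}$ is a summand of $P_n$ and lies in $\mathcal{P}[-s]$. Inductively the stage-$(d-1)$ triangle splits, contradicting (3). Now Remark~\ref{lem:2.1}(4) gives $\mathrm{projdim}_\mathcal{T}X=d-s$, and your Hom-vanishing bound finishes the step.

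The paper avoids projective dimension here altogether. If $X\in\mathcal{P}[-s]\ast\cdots\ast\mathcal{P}[-s+d-1]$, dimension shifting along the triangles gives $\mathrm{Hom}_\mathcal{T}(X_{d-1},X_d[1])\cong\mathrm{Hom}_\mathcal{T}(X,X_d[d])=0$. So the stage-$(d-1)$ triangle splits directly.

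For $(4)\Rightarrow(1)$ your route genuinely differs from the paper's. The paper inducts on $d$: it peels off $Y\to Q\to X$ with $Q\in\mathcal{P}[-s]$ and applies the hypothesis to $Y$. This needs bookkeeping the paper leaves implicit:
\begin{itemize}
\item $\mathrm{sup}Y$ can drop below $s$, so (4) for $Y$ is not literally the inductive hypothesis.
\item For $d=1$ and $H\in\mathcal{H}_M$, the map $\mathrm{Hom}_\mathcal{T}(Y,H[-s])\to\mathrm{Hom}_\mathcal{T}(X,H[1-s])$ is only surjective, so non-vanishing does not transfer.
\end{itemize}
You instead get $\mathrm{pd}(X)\leq d$ from Hom-vanishing against $\mathcal{H}_M$ and Proposition~\ref{lem2.3}. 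You get $d\leq\mathrm{pd}(X)$ by running $(1)\Rightarrow(3)\Rightarrow(4)$ at $d_0=\mathrm{pd}(X)$ and using minimality of $d$. This is cleaner, and it rests on one inequality, filtration length $\geq\mathrm{projdim}_\mathcal{T}X+\mathrm{sup}X$, used in both $(3)\Rightarrow(4)$ and $(4)\Rightarrow(1)$. Its cost is that $(3)\Rightarrow(4)$ then needs the identification $\mathrm{pd}(X)=d$ supplied above, which the paper's direct splitting argument does not.
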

\begin{proof} Set $s=\mathrm{sup}X$ and fix  a sppj resolution of $X$
$$\cdots\longrightarrow P_n\stackrel{d_n}\longrightarrow P_{n-1}\longrightarrow\cdots\longrightarrow P_1\stackrel{d_1}\longrightarrow P_0\longrightarrow X.$$

 (1) $\Rightarrow$ (2) Consider the exact triangle $X_1\rightarrow P_0\rightarrow X\rightarrow X_1[1]$. We proceed by induction on $d$.
If $d=0$, then $\mathrm{Hom}_\mathcal{T}(X,X_1[1])=0$ as $\mathrm{sup}X_1\leq s$, so this triangle is split and $X\in\mathcal{P}[-s]$.
Assume that $d\geq1$. By Remark \ref{lem:2.1}(4), $\mathrm{projdim}_\mathcal{T}X_1+\mathrm{sup}X_1=d+\mathrm{sup}P_1-\mathrm{sup}P_0-1\leq d-1$. As $\cdots\rightarrow P_n\rightarrow\cdots\rightarrow P_1\rightarrow X_1$ is a sppj resolution of $X_1$, it follows from the induction that
there is $e\geq0$ such that $P_n=0$ for $n>e$ and $e-1+\mathrm{sup}P_1-\mathrm{sup}P_e=d+\mathrm{sup}P_1-\mathrm{sup}P_0-1$ and the triangle $X_{e}\rightarrow P_{e-1}\rightarrow X_{e-1}\rightarrow X_{e}[1]$ is not split, as claimed.

(2) $\Rightarrow$ (3) By Remark \ref{lem:0.1}(1), one can choose the sppj resolution $P^\bullet$, such that $\mathrm{sup}P_i=\mathrm{sup}X$ for all $i\geq 0$. In this case, $e=d$, as desired.

(3) $\Rightarrow$ (4) Set $X_0=X$. The corresponding exact triangles$$X_{n+1}\rightarrow P_n\rightarrow X_n\rightarrow X_{n+1}[1]$$
imply that $X\in\mathcal{P}[-s]\ast
\mathcal{P}[-s+1]\ast\cdots
\ast\mathcal{P}[-s+d]$ as $\mathrm{sup}X_{n}=\mathrm{sup}X$ for $n\geq0$. If $X\in\mathcal{P}[-s]\ast\cdots
\ast\mathcal{P}[-s+d-1]$, then $\mathrm{Hom}_\mathcal{T}(X,X_d[i])=0$ for $i> d$
as $\mathrm{Hom}_\mathcal{T}(M,X_d[i])=0$ for $i>\mathrm{sup}X_d$.
Hence the corresponding triangles imply that $$\mathrm{Hom}_\mathcal{T}(X_{d-1},
X_d[1])\cong\mathrm{Hom}_\mathcal{T}(X,X_d[d])=0,$$ so the triangle $X_{d}\rightarrow P_{d-1}\rightarrow X_{d-1}\rightarrow X_{d}[1]$ is split, a contradiction.

(4) $\Rightarrow$ (1) We may assume that $d\geq1$. As $\mathrm{Hom}_\mathcal{T}(M,\mathcal{H}_M[\neq0])=0$ and $X\in\mathcal{P}[-s]\ast\cdots\ast\mathcal{P}[-s+d]$, it follows that $\mathrm{Hom}_\mathcal{T}(X,\mathcal{H}_M[i])=0$ for $i>-s+d$. Also
 there exists an exact triangle $Y\rightarrow Q\rightarrow X\rightarrow Y[1]$ such that $Q\in\mathcal{P}[-s]$ and $d-1$ is the smallest number which satisfies $Y\in\mathcal{P}[-s]\ast\cdots\ast\mathcal{P}[-s+d-1]$. By the induction, $\mathrm{Hom}_\mathcal{T}(X,\mathcal{H}_M[-s+d])
 \cong\mathrm{Hom}_\mathcal{T}(Y,\mathcal{H}_M[-s+d-1])\neq0$, so $\mathrm{projdim}_\mathcal{T}X+\mathrm{sup}X=d$
by Proposition \ref{lem2.3}.
\end{proof}

Dually, we obtain analogous characterizations for the injective dimension.

\begin{prop}\label{lem3.3} Let $Y\in\mathcal{T}$ and $n\in\mathbb{Z}$. The following are equivalent:

$(1)$ $\mathrm{injdim}_\mathcal{T}Y\leq n$;

$(2)$ $\mathrm{Hom}_\mathcal{T}(V[i],Y)=0$ for all $V\in\mathcal{T}^{\mathrm{b}}$ and $i<-n+\mathrm{inf}V$;

$(3)$ $\mathrm{Hom}_\mathcal{T}(\mathcal{H}_M[i],Y)=0$ for $i<-n$.\\
In particular, $\mathrm{injdim}_\mathcal{T}Y=\mathrm{sup}\{n\in\mathbb{Z}\hspace{0.03cm}|\hspace{0.03cm}
\mathrm{Hom}_\mathcal{T}(\mathcal{H}_M[-n],Y)\neq0\}$. If $\mathrm{injdim}_\mathcal{T}Y<\infty$, then $\mathrm{injdim}_\mathcal{T}Y=\mathrm{sup}\{n\in\mathbb{Z}\hspace{0.03cm}|\hspace{0.03cm}
\mathrm{Hom}_\mathcal{T}(\mathcal{I}[-n],Y)\neq0\}$.
\end{prop}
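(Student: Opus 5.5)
We dualize the proof of Proposition \ref{lem2.3}, working with $\mathcal{T}^{+}$, $\mathrm{inf}$, $\mathcal{I}=\mathrm{Prod}(M^\ast)$ and ifij morphisms in place of $\mathcal{T}^-$, $\mathrm{sup}$, $\mathcal{P}$ and sppj morphisms.

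\textbf{The implications $(1)\Rightarrow(2)\Rightarrow(3)$.} These are immediate, since $\mathcal{T}^{\mathrm{b}}\subseteq\mathcal{T}^+$. Each $H\in\mathcal{H}_M$ lies in $\mathcal{T}^{\mathrm{b}}$ with $\mathrm{inf}H=0$ (or $H=0$), so (2) specializes to (3).

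\textbf{The implication $(3)\Rightarrow(2)$.} For $V\in\mathcal{T}^{\mathrm{b}}$ we induct on $\mathrm{sup}V-\mathrm{inf}V$. If the difference is $0$, then $V\cong\mathcal{H}^{a}_M(V)[-a]$ with $a=\mathrm{inf}V$. Hence $\mathrm{Hom}_\mathcal{T}(V[i],Y)=\mathrm{Hom}_\mathcal{T}(\mathcal{H}^a_M(V)[i-a],Y)$, which vanishes for $i-a<-n$. In general, use the triangle
\[
\mathcal{H}^{a}_M(V)[-a]\rightarrow V\rightarrow \tau^{\geq a+1}(V)\rightarrow \mathcal{H}^{a}_M(V)[-a+1].
\]
Both outer terms have $\mathrm{inf}\geq a$ and smaller amplitude, so the long exact Hom sequence gives the vanishing.

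\textbf{The implication $(2)\Rightarrow(1)$.} We extend from $\mathcal{T}^{\mathrm{b}}$ to $\mathcal{T}^+$, mirroring \cite[Theorem 3.3]{Y1}. For $V\in\mathcal{T}^+$ with $\mathrm{inf}V=a$, a map $V[i]\rightarrow Y$ with $i<-n+a$ should factor through a bounded truncation. The difficulty is that $\mathcal{T}^+$ is unbounded above, and homotopy colimits do not commute with $\mathrm{Hom}(-,Y)$ in general.

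The first thing I would try is to dualize the argument of the projective case. Write $V$ as a homotopy limit of the $\tau^{\leq m}V$, and use the Milnor $\lim^1$ sequence
\[
0\rightarrow{\lim}^1\mathrm{Hom}(\tau^{\leq m}V[i+1],Y)\rightarrow\mathrm{Hom}(V[i],Y)\rightarrow\lim\mathrm{Hom}(\tau^{\leq m}V[i],Y)\rightarrow0.
\]
However, $V$ is not a homotopy colimit of the $\tau^{\leq m}V$ in a way that is contravariantly exact. So I would instead use the fact that $V\cong\mathrm{hocolim}_m\,\tau^{\leq m}V$, which holds since the $t$-structure is compactly generated and therefore compatible with coproducts. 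This gives the Milnor sequence for $\mathrm{Hom}(\mathrm{hocolim},Y)$ with $\lim$ and $\lim^1$ of the groups $\mathrm{Hom}(\tau^{\leq m}V[i],Y)$ and $\mathrm{Hom}(\tau^{\leq m}V[i+1],Y)$. Each $\tau^{\leq m}V$ is bounded with $\mathrm{inf}\geq a$, so all these groups vanish for $i<-n+a$ by (2). Hence $\mathrm{Hom}(V[i],Y)=0$. This homotopy-colimit step is the main point to check.

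\textbf{The formula for $\mathrm{injdim}_\mathcal{T}Y$ and the $\mathcal{I}$ description.} The equivalence $(1)\Leftrightarrow(3)$ gives $\mathrm{injdim}_\mathcal{T}Y=\mathrm{sup}\{n\mid\mathrm{Hom}(\mathcal{H}_M[-n],Y)\neq0\}$.

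For the $\mathcal{I}$ description, suppose $\mathrm{injdim}_\mathcal{T}Y=n$, and pick $H$ with $\mathrm{Hom}(H[-n],Y)\neq0$. Take an ifij triangle $H\rightarrow I\rightarrow H'\rightarrow H[1]$ with $I\in\mathcal{I}$, so $\mathrm{inf}H'\geq0$. Then $\mathrm{Hom}(H'[-n-1],Y)=0$ by (2), because $-n-1<-n+\mathrm{inf}H'$. The exact sequence
\[
\mathrm{Hom}(I[-n],Y)\rightarrow\mathrm{Hom}(H[-n],Y)\rightarrow\mathrm{Hom}(H'[-n-1],Y)=0
\]
then shows $\mathrm{Hom}(I[-n],Y)\neq0$.

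Conversely, $I\in\mathcal{I}$ lies in $\mathcal{T}^{+}$ with $\mathrm{inf}I=0$ by Remark \ref{lem:0.1}(2). So if $\mathrm{Hom}(\mathcal{I}[-m],Y)\neq0$ then $m\leq n$ by the definition of $\mathrm{injdim}$, and equality holds.
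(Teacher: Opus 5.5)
Your overall plan is the one the paper has in mind. The paper gives no separate proof and only says the result is dual to Proposition \ref{lem2.3}. That proof consists of the trivial implications, induction on $\mathrm{sup}-\mathrm{inf}$, an extension from $\mathcal{T}^{\mathrm{b}}$ to half-bounded objects (deferred to \cite[Theorem 3.3]{Y1}), and a one-triangle argument for the formula with $\mathcal{P}$ (here $\mathcal{I}$). Your $(1)\Rightarrow(2)\Rightarrow(3)$, $(3)\Rightarrow(2)$ and the $\mathcal{I}$-formula are fine. The one slip there: $\mathrm{Hom}_\mathcal{T}(H'[-n-1],Y)=0$ should be quoted from (1), not (2), because $H'$ need not lie in $\mathcal{T}^{\mathrm{b}}$ when $\mathrm{inf}M=-\infty$. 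Since (1) is your hypothesis at that point, nothing is lost.

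The genuine gap is at the step you flagged, $(2)\Rightarrow(1)$. The hocolim presentation is valid: for $m\geq -k$ one has $\mathrm{Hom}_\mathcal{T}(M[k],\tau^{\leq m}V)\cong\mathrm{Hom}_\mathcal{T}(M[k],V)$, and $M$ is a compact generator. The Milnor sequence you display is also the right one, so drop the preceding remarks about homotopy limits. The problem is that its $\lim^1$ term is built from $\mathrm{Hom}_\mathcal{T}(\tau^{\leq m}V[i+1],Y)$, and (2) kills these only when $i+1<-n+a$. At the boundary $i=-n+a-1$ they need not vanish: for $V=H[-a]$ with $H\in\mathcal{H}_M$ and $\mathrm{Hom}_\mathcal{T}(H[-n],Y)\neq0$, they all equal $\mathrm{Hom}_\mathcal{T}(H[-n],Y)$. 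As written, you only get $\mathrm{Hom}_\mathcal{T}(V[i],Y)=0$ for $i<-n+a-1$, i.e.\ $\mathrm{injdim}_\mathcal{T}Y\leq n+1$. The paper's own use of this triangle in the proof of Proposition \ref{lem1.1} loses the same degree, harmlessly there, because only finiteness is needed.

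Either of two repairs works.
(a) Show the tower is Mittag-Leffler. From the triangle $\tau^{\leq m}V\to\tau^{\leq m+1}V\to\mathcal{H}^{m+1}_M(V)[-m-1]\to\tau^{\leq m}V[1]$, the cokernel of $\mathrm{Hom}_\mathcal{T}(\tau^{\leq m+1}V[j],Y)\to\mathrm{Hom}_\mathcal{T}(\tau^{\leq m}V[j],Y)$ embeds in $\mathrm{Hom}_\mathcal{T}(\mathcal{H}^{m+1}_M(V)[j-m-2],Y)$. This group is $0$ for $j=-n+a$ and $m\geq a-1$ by (3), so the $\lim^1$ term vanishes.
(b) Keep the weak bound $i<-n+\mathrm{inf}W-1$ for all $W\in\mathcal{T}^{+}$ and apply it to $W=\tau^{\geq a+1}V$, whose $\mathrm{inf}$ is at least $a+1$. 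Then finish with the triangle $\mathcal{H}^{a}_M(V)[-a]\to V\to\tau^{\geq a+1}V\to\mathcal{H}^{a}_M(V)[-a+1]$ and (3), exactly as in your $(3)\Rightarrow(2)$ step.
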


\begin{prop}\label{lem3.4}  Let $Y\in\mathcal{T}^{+}$ and $d\geq0$. The following are equivalent:

$(1)$ $\mathrm{injdim}_\mathcal{T}Y-\mathrm{inf}Y=d$;

$(2)$ For any inij resolution $I^\bullet$ of $Y$, there exists $e\geq0$ such that $I^n=0$ for $n>e$ and $e+\mathrm{inf}I^e-\mathrm{inf}I^0=d$ and the triangle $Y^{e-1}\rightarrow I^{e-1}\rightarrow Y^{e}\rightarrow Y^{e-1}[1]$ is not split;

$(3)$ $Y$ has an ifij resolution of length $d$, such that $\mathrm{inf}I^i=\mathrm{inf}Y$ for $0\leq i\leq d$ and the triangle $Y^{d-1}\rightarrow I^{d-1}\rightarrow Y^{d}\rightarrow Y^{d-1}[1]$ is not split;

$(4)$ $d$ is the smallest number which satisfies $Y\in\mathcal{I}[-\mathrm{inf}Y-d]\ast\cdots\ast\mathcal{I}[-\mathrm{inf}Y]$.
\end{prop}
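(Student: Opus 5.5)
The plan is to dualize the proof of Proposition \ref{lem2.5} step by step. We replace $\mathcal{P}=\mathrm{Add}(M)$ by $\mathcal{I}=\mathrm{Prod}(M^\ast)$, sppj resolutions by ifij coresolutions, $\mathrm{sup}$ by $\mathrm{inf}$, and Proposition \ref{lem2.3} by Proposition \ref{lem3.3}. Set $t=\mathrm{inf}Y$ and fix an ifij coresolution $Y\to I^0\to I^1\to\cdots$ with triangles $(\dag^n)$.

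First we need the dual of Remark \ref{lem:2.1}(4). If $Y'$ sits in a triangle $Y\stackrel{g}\to I\to Y'\to Y[1]$ with $g$ an ifij morphism and $\mathrm{injdim}_\mathcal{T}Y-\mathrm{inf}Y\geq1$, then $\mathrm{injdim}_\mathcal{T}Y'=\mathrm{injdim}_\mathcal{T}Y-1$. For this we apply $\mathrm{Hom}_\mathcal{T}(\mathcal{H}_M[i],-)$ to the triangle. The key input is that $\mathrm{Hom}_\mathcal{T}(\mathcal{H}_M[i],I)=0$ for $i\neq -\mathrm{inf}I$. This follows from Lemma \ref{lem0.2}, since $\mathrm{Hom}_\mathcal{T}(H,M^\ast[j])$ is dual to $\mathrm{Hom}_\mathcal{T}(M,H[-j])$, which vanishes for $j\neq0$. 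Together with the injectivity of $\mathcal{H}^{t}_M(g)$, this gives the shift by one. This is the step I expect to need the most care: one must check the boundary degree $i=-t$, where injectivity of $\mathcal{H}^t_M(g)$ is what makes the connecting map behave.

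With this in hand, the four implications go as follows.

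(1)$\Rightarrow$(2): induct on $d$. If $d=0$, then $\mathrm{Hom}_\mathcal{T}(Y^1[-1],Y)=0$, because $\mathrm{inf}Y^1\geq t$ and Proposition \ref{lem3.3}(2) applies. Hence $Y\to I^0$ splits and $Y\in\mathcal{I}[-t]$. For $d\geq1$, apply the lemma above to pass to $Y^1$, whose coresolution is the truncated one, and use the induction hypothesis; the index bookkeeping is the same as in Proposition \ref{lem2.5}.

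(2)$\Rightarrow$(3): use the dual of Remark \ref{lem:0.1}(1). This replaces $I$ by $I\oplus I$, which yields an ifij coresolution with $\mathrm{inf}I^i=t$ for all $i$, and then $e=d$.

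(3)$\Rightarrow$(4): the triangles exhibit $Y\in\mathcal{I}[-t-d]\ast\cdots\ast\mathcal{I}[-t]$. For minimality, suppose $Y$ lies in the shorter product. Then $\mathrm{Hom}_\mathcal{T}(Y^d[-i],Y)=0$ for $i>d$, since $\mathrm{Hom}_\mathcal{T}(Y^d,M^\ast[j])=0$ for $j>-\mathrm{inf}Y^d$. The triangles then give $\mathrm{Hom}_\mathcal{T}(Y^d[-1],Y^{d-1})\cong\mathrm{Hom}_\mathcal{T}(Y^d[-d],Y)=0$, so the last triangle splits, a contradiction.

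(4)$\Rightarrow$(1): membership in $\mathcal{I}[-t-d]\ast\cdots\ast\mathcal{I}[-t]$ together with $\mathrm{Hom}_\mathcal{T}(\mathcal{H}_M,M^\ast[\neq0])=0$ gives $\mathrm{Hom}_\mathcal{T}(\mathcal{H}_M[i],Y)=0$ for $i<-t-d$. For the reverse inequality, peel off a triangle $Y\to J\to Y'\to Y[1]$ with $J\in\mathcal{I}[-t]$ and $Y'$ minimal of length $d-1$. By induction we get $\mathrm{Hom}_\mathcal{T}(\mathcal{H}_M[-t-d],Y)\cong\mathrm{Hom}_\mathcal{T}(\mathcal{H}_M[-t-d+1],Y')\neq0$, and Proposition \ref{lem3.3} finishes.
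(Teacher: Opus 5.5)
Your plan is the paper's own route: the paper gives no separate proof, only ``dually'' after Proposition~\ref{lem2.5}. Your shift lemma, (3)$\Rightarrow$(4), and the upper bound in (4)$\Rightarrow$(1) essentially go through, modulo the small corrections at the end. But dualizing the proof of Proposition~\ref{lem2.5} verbatim also imports its defect in condition (2), so your (1)$\Rightarrow$(2) and (2)$\Rightarrow$(3) fail as written.

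\textbf{Condition (2) is false as stated.} It quantifies over \emph{all} ifij coresolutions and demands $I^n=0$ for $n>e$. Take $0\neq Y\in\mathcal{I}$, say $Y=M^\ast$, so $t=\mathrm{injdim}_\mathcal{T}Y=0$ and $d=0$. The inclusion of the first summand $Y\to Y\oplus Y$ is an ifij morphism with cone $Y$; this is exactly the padding in the dual of Remark~\ref{lem:0.1}(1). Iterating it gives an ifij coresolution with $I^n=Y\oplus Y\neq0$ for every $n$. Your base case only yields ``$Y\to I^0$ splits and $Y\in\mathcal{I}[-t]$'', which cannot force $I^n=0$ for $n>0$.

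Your (2)$\Rightarrow$(3) fails for the same reason. Once you pad so that $\mathrm{inf}I^i=t$ for all $i$, you also get $\mathrm{inf}Y^i=t$. So every $Y^i\neq0$, the coresolution never terminates, and ``then $e=d$'' has no meaning.

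\textbf{What your induction does prove.} For every ifij coresolution there is a least $e\geq0$ with $Y^e\in\mathcal{I}[-\mathrm{inf}Y^e]$. Such an $e$ exists with $e\leq d$: the quantity $\mathrm{injdim}_\mathcal{T}Y^k-\mathrm{inf}Y^k$ strictly drops while it is positive, and once it is $0$ the ifij morphism splits. For this $e$:
\begin{itemize}
\item $e+\mathrm{inf}I^e-\mathrm{inf}I^0=d$, because for $k<e$ the injective dimension drops by exactly one at each step, and $\mathrm{injdim}_\mathcal{T}Y^e=\mathrm{inf}Y^e$.
\item For $e\geq1$ the triangle $Y^{e-1}\to I^{e-1}\to Y^e\to Y^{e-1}[1]$ is non-split; otherwise $Y^{e-1}$ would be a summand of $I^{e-1}\in\mathcal{I}[-\mathrm{inf}Y^{e-1}]$.
\end{itemize}
To obtain (3), pad only so as to keep $\mathrm{inf}Y^k=t$; then $e=d$. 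You must then truncate by hand: replace $f^d$ by $\mathrm{id}_{Y^d}$, which is ifij since $Y^d\in\mathcal{I}[-t]$, so that $Y^{d+1}=0$.

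\textbf{Smaller corrections.}
\begin{itemize}
\item Shift lemma: injectivity of $\mathcal{H}^t_M(g)$ is used only to get $\mathrm{inf}Y^1\geq t$. The boundary case $\mathrm{injdim}_\mathcal{T}Y=t+1$ is handled by $\mathrm{Hom}_\mathcal{T}(\mathcal{H}_M[-t-1],I)=0$, which makes $\mathrm{Hom}_\mathcal{T}(\mathcal{H}_M[-t],Y^1)\to\mathrm{Hom}_\mathcal{T}(\mathcal{H}_M[-t-1],Y)$ surjective.
\item Base case: $Y^1$ need not be bounded, so invoke the definition of $\mathrm{injdim}_\mathcal{T}$ (test objects in $\mathcal{T}^+$), not Proposition~\ref{lem3.3}(2).
\item (3)$\Rightarrow$(4): the vanishing holds for $i\geq d$, and $i=d$ is the case you actually use; ``$i>d$'' does not cover it.
\item (4)$\Rightarrow$(1): your isomorphism fails for $d=1$. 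There $\mathrm{Hom}_\mathcal{T}(\mathcal{H}_M[-t],J)$ can be nonzero, and you only get a surjection $\mathrm{Hom}_\mathcal{T}(\mathcal{H}_M[-t],Y')\to\mathrm{Hom}_\mathcal{T}(\mathcal{H}_M[-t-1],Y)$. Argue instead that if the target vanishes, then $\mathrm{injdim}_\mathcal{T}Y=t=\mathrm{inf}Y$, hence $Y\in\mathcal{I}[-t]$, contradicting minimality.
\item Also in (4)$\Rightarrow$(1): $\mathrm{inf}Y'$ can exceed $t$, so run that induction for a fixed $t$ over objects with $\mathrm{inf}\geq t$, rather than in terms of $\mathrm{inf}Y'$.
\end{itemize}
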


Following \cite{CCZ,KPV}, set $\mathcal{T}^{\mathrm{sb}}:=\bigcup_{i\geq0}\mathrm{smd}(\mathrm{Coprod}(M[-i,i]))$, $\mathcal{T}^\mathrm{b}_p:=\mathcal{T}^-_p\cap\mathcal{T}^+_p$ and $\mathcal{T}^\mathrm{b}_i:=\mathcal{T}^-_i\cap\mathcal{T}^+_i$, where $\mathcal{T}^-_p:={^{\bot_\ll}}(\mathcal{T}^\mathrm{b})$, $\mathcal{T}^+_p:={^{\bot_\gg}}(\mathcal{T}^\mathrm{b})$,
 $\mathcal{T}^-_i:=(\mathcal{T}^\mathrm{b})^{\bot_\gg}$, $\mathcal{T}^+_i:=(\mathcal{T}^\mathrm{b})^{\bot_\ll}$.

\begin{rem}\label{lem:2.2}{\rm  (1) By Proposition \ref{lem2.3}, one has $\mathcal{T}^+_p=\{X\in\mathcal{T}\hspace{0.03cm}|\hspace{0.03cm}
 \mathrm{projdim}_\mathcal{T}X<\infty\}$, it follows from  \cite[Proposition 3.4]{CCZ} that $\mathcal{T}^{\mathrm{sb}}=\{X\in\mathcal{T}^-\hspace{0.03cm}|\hspace{0.03cm}
 \mathrm{projdim}_\mathcal{T}X<\infty\}\subseteq\mathcal{T}^+_p$.

  (2) Let $X\in\mathcal{T}^-$. Then $\mathrm{Hom}_\mathcal{T}(X,U[i])=0$ for $U\in\mathcal{T}^{\mathrm{b}}$ and $i<\mathrm{inf}U-\mathrm{sup}X$, so
 $\mathcal{T}^-\subseteq\mathcal{T}^-_p$. If $M\in\mathcal{T}^\mathrm{b}$, then $M^\ast\in\mathcal{T}^\mathrm{b}$, and so $\mathcal{T}^-=\mathcal{T}^-_p$ by Lemma \ref{lem0.2}. Thus
$\mathcal{T}^\mathrm{b}_p=
  \{X\in\mathcal{T}^\mathrm{b}\hspace{0.03cm}|\hspace{0.03cm}
 \mathrm{projdim}_\mathcal{T}X<\infty\}$ by Remark \ref{lem:2.1}(2), and $\mathcal{T}^{\mathrm{sb}}=\mathcal{T}^{\mathrm{b}}_p$ by  \cite[Proposition 3.4]{CCZ}.

(3) If $M\in\mathcal{T}^\mathrm{b}$, then $\mathcal{T}^+=\mathcal{T}^+_i$ by Lemma \ref{lem0.2}, and hence $\mathcal{T}^\mathrm{b}_i=
  \{Y\in\mathcal{T}^\mathrm{b}\hspace{0.03cm}|\hspace{0.03cm}
\mathrm{injdim}_\mathcal{T}Y<\infty\}$ as $\mathcal{T}^-_i=\{Y\in\mathcal{T}\hspace{0.03cm}|\hspace{0.03cm}
 \mathrm{injdim}_\mathcal{T}Y<\infty\}$. Let $Y\in\mathcal{T}$. Then $\mathrm{injdim}_\mathcal{T}Y\geq\mathrm{inf}Y$, $\mathrm{sup}Y\leq\mathrm{injdim}_\mathcal{T}Y-\mathrm{inf}M$,  and
 $\mathrm{injdim}_\mathcal{T}Y[n]=\mathrm{injdim}_\mathcal{T}Y-n$ for $n\in\mathbb{Z}$.

 (4) Following \cite{CCZ}, the \emph{global dimension} of $\mathcal{T}$ with respect to $M$ is defined as $$\mathrm{gpd}(\mathcal{T},M):=
 \mathrm{sup}\{\mathrm{projdim}_\mathcal{T}X\hspace{0.03cm}|\hspace{0.03cm}
X\in\mathcal{T}^{\mathrm{b}}\cap\mathcal{T}^{\geq0}\}.$$By \cite[Lemma 5.2]{CCZ}, $\mathrm{gpd}(\mathcal{T},M)=\mathrm{sup}\{\mathrm{projdim}_\mathcal{T}H\hspace{0.03
cm}|\hspace{0.03cm}H\in\mathcal{H}_M\}=\mathrm{gl.dim}_{\mathcal{H}_M}\mathcal{T}$, the \emph{global dimension of $\mathcal{T}$ relative to $\mathcal{H}_M$}. Thus by \cite[Theorem 2.9]{Ko}, $\mathrm{gpd}(\mathcal{T},M)<\infty$ if and only if $\mathcal{T}^{\mathrm{b}}\subseteq\mathcal{T}^{\mathrm{b}}_p$, and if $M\in\mathcal{T}^\mathrm{b}$ then $\mathrm{gpd}(\mathcal{T},M)<\infty$ if and only if $\mathcal{T}^{\mathrm{b}}=\mathcal{T}^{\mathrm{b}}_p$.

(5) If $R$ is a ring, then $\mathrm{projdim}_RX=\mathrm{projdim}_{\mathrm{D}(R)}X$ and $\mathrm{injdim}_RX=\mathrm{injdim}_{\mathrm{D}(R)}X$ for $X\in\mathrm{D}(R)$. Thus $\mathrm{gpd}(\mathrm{D}(R),R)=\mathrm{gldim}R$, the global dimension of $R$.

 (6) If $A$ is a non-positive dg ring, then $\mathrm{projdim}_{\mathrm{D}(A)}X=\mathrm{projdim}_AX$ and $\mathrm{injdim}_{\mathrm{D}(A)}X=\mathrm{injdim}_AX$ for any $X\in\mathrm{D}(A)$ by \cite[Definition 2.1]{BSSW}, it follows from \cite[Theorem 5.1]{Mi18} that $\mathrm{gpd}(\mathrm{D}(A),A)=\mathrm{gldim}A$, the global dimension of $A$.}
\end{rem}

\begin{prop}\label{lem1.1} The following statements  are equivalent:

 $(1)$ $\mathrm{gpd}(\mathcal{T},M)<\infty$;

 $(2)$ $\mathcal{T}^{b}\subseteq\mathcal{T}^{b}_p$;

  $(3)$ $\mathcal{T}^{b}\subseteq\mathcal{T}^{b}_i$;

 $(4)$ $\mathcal{T}^{+}\subseteq\mathcal{T}^{+}_p$;

  $(5)$ $\mathcal{T}^{-}\subseteq\mathcal{T}^{-}_i$.\\Moreover, if
 $M\in\mathcal{T}^{b}$ then all the above inclusions are actually equalities.
\end{prop}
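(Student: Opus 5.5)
We prove a cycle (1)$\Rightarrow$(4)$\Rightarrow$(2)$\Rightarrow$(1), and the dual chain (1)$\Rightarrow$(5)$\Rightarrow$(3)$\Rightarrow$(1). The equivalence (1)$\Leftrightarrow$(2) is already recorded in Remark \ref{lem:2.2}(4) via \cite[Lemma 5.2]{CCZ} and \cite[Theorem 2.9]{Ko}, so the work is in relating the bounded statements to the half-bounded ones, and in the injective side.

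\emph{Step (1)$\Rightarrow$(4).} Let $g=\mathrm{gpd}(\mathcal{T},M)<\infty$. By Remark \ref{lem:2.2}(4), $\mathrm{projdim}_\mathcal{T}H\leq g$ for all $H\in\mathcal{H}_M$, so $\mathrm{Hom}_\mathcal{T}(H,\mathcal{H}_M[i])=0$ for $i>g$. Take $X\in\mathcal{T}^+$, say $X\in\mathcal{T}^{\geq -m}$. By Proposition \ref{lem2.3}(3) it suffices to show $\mathrm{Hom}_\mathcal{T}(X,H[i])=0$ for $H\in\mathcal{H}_M$ and $i>g+m$.

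Fix such $H$ and $i$, and use the truncation triangle $\tau^{\leq i}X\rightarrow X\rightarrow\tau^{\geq i+1}X$. Since $H[i]\in\mathcal{T}^{\leq -i}$ and $\tau^{\geq i+1}X\in\mathcal{T}^{\geq i+1}$, the $t$-structure gives $\mathrm{Hom}(\tau^{\geq i+1}X,H[i])=0$ and $\mathrm{Hom}(\tau^{\geq i+1}X,H[i+1])=0$; I need to check the exact index bookkeeping against the sign conventions used here. This reduces the problem to $\tau^{\leq i}X$, which is bounded with cohomology in degrees $[-m,i]$.

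For bounded objects, $\mathrm{projdim}$ is controlled by its cohomologies: by induction on the number of nonzero cohomologies, using the triangles $\tau^{\leq \ell-1}\rightarrow\tau^{\leq\ell}\rightarrow\mathcal{H}^\ell_M[-\ell]$ and Remark \ref{lem:2.1}(3), one gets $\mathrm{projdim}_\mathcal{T}Z\leq g-\mathrm{inf}Z$ for $Z\in\mathcal{T}^\mathrm{b}$. Hence $\mathrm{projdim}_\mathcal{T}X\leq g+m<\infty$, so $X\in\mathcal{T}^+_p$ by Remark \ref{lem:2.2}(1).

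The genuine subtlety is that the truncation degree depends on $i$. So the argument must be organized as: for each $i>g+m$, first cut at a degree so that $\mathrm{Hom}$ from the upper part into $H[i]$ and $H[i+1]$ vanishes, then bound the lower part uniformly by $g+m$ using the cohomology induction.

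\emph{Step (4)$\Rightarrow$(2).} We have $\mathcal{T}^\mathrm{b}\subseteq\mathcal{T}^+\subseteq\mathcal{T}^+_p$. Also $\mathcal{T}^\mathrm{b}\subseteq\mathcal{T}^-\subseteq\mathcal{T}^-_p$ by Remark \ref{lem:2.2}(2). Therefore $\mathcal{T}^\mathrm{b}\subseteq\mathcal{T}^\mathrm{b}_p$.

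\emph{Injective side.} The chain (1)$\Rightarrow$(5)$\Rightarrow$(3) is dual to the projective chain, using Propositions \ref{lem3.3} and \ref{lem3.4}. Since $\mathcal{T}^-_i$ is the class of objects of finite injective dimension (Remark \ref{lem:2.2}(3)), we need to bound $\mathrm{injdim}_\mathcal{T}H$ for $H\in\mathcal{H}_M$ in terms of $g$. The key observation is
\[
\mathrm{Hom}_\mathcal{T}(H',H[i])=0\ \text{for all}\ H'\in\mathcal{H}_M,\ i>g.
\]
By Proposition \ref{lem3.3}(3) this gives $\mathrm{injdim}_\mathcal{T}H\leq g$. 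Extend to $\mathcal{T}^-$ by the dual truncation argument, and note $\mathcal{T}^\mathrm{b}\subseteq\mathcal{T}^+\subseteq\mathcal{T}^+_i$ (the inclusion $\mathcal{T}^+\subseteq\mathcal{T}^+_i$ is dual to Remark \ref{lem:2.2}(2)).

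For (3)$\Rightarrow$(1), suppose $\mathrm{gpd}=\infty$. The same displayed vanishing is symmetric in the two objects of $\mathcal{H}_M$, so a uniform bound on $\mathrm{injdim}$ over $\mathcal{H}_M$ would bound $\mathrm{gpd}$. The main obstacle is uniformity: (3) only gives finiteness object by object, not a uniform bound. To obtain one, I would argue by contradiction: take $H_n\in\mathcal{H}_M$ with $\mathrm{injdim}_\mathcal{T}H_n\geq n$, and consider the product $\prod_n H_n$, or more precisely $\mathcal{H}^0_M(\prod_n H_n)$. Products are exact enough because $M$ is compact, so $\mathrm{Hom}(M,-)$ commutes with products, and $\mathcal{H}_M$ is closed under products. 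The resulting object lies in $\mathcal{T}^\mathrm{b}$ but has infinite injective dimension, contradicting (3). The analogous coproduct trick, using compactness of $M$, handles (2)$\Rightarrow$(1) if one prefers not to cite \cite{Ko}.

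\emph{Equalities when $M\in\mathcal{T}^\mathrm{b}$.} By Remark \ref{lem:2.2}(2),(3) we have $\mathcal{T}^\pm_p\subseteq\mathcal{T}^\pm$ and $\mathcal{T}^\pm_i\subseteq\mathcal{T}^\pm$ in this case. Hence the reverse inclusions hold automatically: $\mathcal{T}^+_p\subseteq\mathcal{T}^+$, since objects of finite projective dimension have $\mathrm{inf}$ bounded below by Remark \ref{lem:2.1}(2), and dually $\mathcal{T}^-_i\subseteq\mathcal{T}^-$ by Remark \ref{lem:2.2}(3).
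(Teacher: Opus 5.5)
Your step (1)$\Rightarrow$(4) has a genuine gap, and the ``dual truncation argument'' in (1)$\Rightarrow$(5) fails for the same reason.

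The vanishing $\mathrm{Hom}_\mathcal{T}(\tau^{\geq i+1}X,H[i])=0$ does not follow from the $t$-structure. The axioms kill maps from $\mathcal{T}^{\leq a}$ to $\mathcal{T}^{\geq a+1}$. You are mapping from $\mathcal{T}^{\geq i+1}$ into $H[i]\in\mathcal{T}^{\leq -i}$, which is the opposite direction. Such groups are higher Ext groups; for a ring, $\mathrm{Hom}(A[-i-1],H[i])=\mathrm{Ext}^{2i+1}_R(A,H)$.

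Re-indexing cannot help. Whatever degree $N$ you cut at, $\tau^{\geq N}X$ is still unbounded above, so showing it has no maps into $H[i]$ is exactly the statement you are trying to prove. In the same way, for $Y\in\mathcal{T}^-$ the piece $\tau^{\leq N}Y$ stays unbounded below.

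Consequently your cycles only give (4)$\Rightarrow$(2) and (5)$\Rightarrow$(3); conditions (4) and (5) are never reached from (1). The other parts are fine: the bound $\mathrm{projdim}_\mathcal{T}Z\leq g-\mathrm{inf}Z$ for $Z\in\mathcal{T}^\mathrm{b}$, the product argument for (3)$\Rightarrow$(1), and the equalities when $M\in\mathcal{T}^\mathrm{b}$. The product argument in fact supplies the uniformity the paper leaves implicit in ``the converse is similar''.

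What is missing is a limit argument in place of a single truncation. The paper writes $X\in\mathcal{T}^+$ as the homotopy colimit of the bounded objects $\tau^{\leq\mathrm{inf}X+j}(X)$, $j\geq0$. This is valid because $M$ is compact, so $\mathcal{H}^\ell_M$ commutes with coproducts. Concretely, it uses the telescope triangle $\coprod_j\tau^{\leq\mathrm{inf}X+j}(X)\to\coprod_j\tau^{\leq\mathrm{inf}X+j}(X)\to X\to$. Applying $\mathrm{Hom}_\mathcal{T}(-,H[k])$, both outer terms are products of groups killed by your bounded estimate. This gives $\mathrm{Hom}_\mathcal{T}(X,\mathcal{H}_M[k])=0$ for $k>g-\mathrm{inf}X+1$. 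For $\mathcal{T}^-$ one uses the dual homotopy limit.

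Alternatively, you can avoid limits using tools already in Section 3. The definitions of $\mathrm{projdim}_\mathcal{T}$ and $\mathrm{injdim}_\mathcal{T}$ quantify over $\mathcal{T}^-$ and $\mathcal{T}^+$ respectively, and Propositions \ref{lem2.3} and \ref{lem3.3} identify them with the versions tested on $\mathcal{H}_M$.
\begin{itemize}
\item For (1)$\Rightarrow$(5): $\mathrm{projdim}_\mathcal{T}H\leq g$ for all $H\in\mathcal{H}_M$ gives $\mathrm{Hom}_\mathcal{T}(H[j],Y)=0$ for $Y\in\mathcal{T}^-$ and $j<-g-\mathrm{sup}Y$. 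That is, $\mathrm{injdim}_\mathcal{T}Y\leq g+\mathrm{sup}Y$.
\item For (1)$\Rightarrow$(4): your key observation gives $\mathrm{injdim}_\mathcal{T}H\leq g$. Hence $\mathrm{Hom}_\mathcal{T}(X,H[j])=0$ for $X\in\mathcal{T}^+$ and $j>g-\mathrm{inf}X$, that is, $\mathrm{projdim}_\mathcal{T}X\leq g-\mathrm{inf}X$.
\end{itemize}
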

\begin{proof} (1) $\Leftrightarrow$ (2) This follows from  \cite[Theorem 2.9]{Ko}.

(2) $\Leftrightarrow$ (3) Let $X\in\mathcal{T}^{b}$. Then $\mathrm{Hom}_\mathcal{T}(H,X[t])=0$ for all $H\in\mathcal{H}_M$ and $t>\mathrm{projdim}_\mathcal{T}H+\mathrm{sup}X$, it implies that $\mathrm{injdim}_\mathcal{T}X\leq\mathrm{projdim}_\mathcal{T}H+\mathrm{sup}X$. So $\mathcal{T}^{b}\subseteq\mathcal{T}^{b}_i$. The converse is similar.

(4) $\Rightarrow$ (2) Since $\mathcal{T}^{b}\subseteq\mathcal{T}^{+}\subseteq\mathcal{T}^{+}_p$ by (4), it follows that $\mathcal{T}^{b}\subseteq\mathcal{T}^{-}_p\cap\mathcal{T}^{+}_p=\mathcal{T}^{b}_p$.

(1) $\Rightarrow$ (4) Let $\mathrm{gpd}(\mathcal{T},M)=d<\infty$ and $X\in\mathcal{T}^{+}$.
There is a sequence of objects in  $\mathcal{T}^\mathrm{b}$
$$\tau^{\leq\mathrm{inf}X}(X)\stackrel{\nu_0}\longrightarrow\tau^{\leq\mathrm{inf}X+1}(X)
\stackrel{\nu_1}\longrightarrow\tau^{\leq\mathrm{inf}X+2}(X)\longrightarrow\cdots,$$ it yields an exact triangle $\coprod_{j\geq0}\tau^{\leq\mathrm{inf}X+j}(X)\rightarrow \coprod_{j\geq0}\tau^{\leq\mathrm{inf}X+j}(X)\rightarrow X\rightarrow
\coprod_{j\geq0}\tau^{\leq\mathrm{inf}X+j}(X)[1]$. As $\mathrm{Hom}_\mathcal{T}(\tau^{\leq\mathrm{inf}X+j}(X),\mathcal{H}_M[d-\mathrm{inf}X+t])=0$ for $t\geq1$ and $j\geq0$, it follows that $\mathrm{Hom}_\mathcal{T}(\coprod_{j\geq0}\tau^{\leq\mathrm{inf}X+j}(X),\mathcal{H}_M[d-\mathrm{inf}X+t])=0$ for $t\geq1$, so $\mathrm{Hom}_\mathcal{T}(X,\mathcal{H}_M[d-\mathrm{inf}X+t])=0$ for $t\geq2$. Thus $\mathrm{projdim}_\mathcal{T}X<\infty$, and then $\mathcal{T}^{+}\subseteq\mathcal{T}^{+}_p$.

(1) $\Rightarrow$ (5) $\Rightarrow$ (3) The proof is similar to that of (1) $\Rightarrow$ (4) $\Rightarrow$ (2).
\end{proof}

The following result is a generalization of \cite[Proposition 5.3]{Mi18}.

\begin{prop}\label{lem0.4} Let $\mathcal{R},\mathcal{S}$ be compactly generated triangulated categories with compact silting generators
$M_\mathcal{R},M_\mathcal{S}$. If $i:\mathcal{R}\rightarrow\mathcal{S}$ is a triangulated equivalence, then $$\mathrm{gpd}(\mathcal{R},M_\mathcal{R})\leq\mathrm{gpd}(\mathcal{S},M_\mathcal{S})+\mathrm{projdim}_\mathcal{S}i(M_\mathcal{R})
+\mathrm{sup}i(M_\mathcal{R}).$$Consequently, $\mathrm{gpd}(\mathcal{R},M_\mathcal{R})<\infty$ if and only if $\mathrm{gpd}(\mathcal{S},M_\mathcal{S})<\infty$.
\end{prop}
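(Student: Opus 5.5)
We compare the projective dimension of an object of $\mathcal{R}$, measured against the heart $\mathcal{H}_{M_\mathcal{R}}$, with the projective dimension of its image in $\mathcal{S}$, measured against $\mathcal{H}_{M_\mathcal{S}}$. We may assume $d:=\mathrm{gpd}(\mathcal{S},M_\mathcal{S})<\infty$ and $p:=\mathrm{projdim}_\mathcal{S}i(M_\mathcal{R})<\infty$. Put $s:=\mathrm{sup}\,i(M_\mathcal{R})$, and assume $s<\infty$; otherwise there is nothing to prove. By Remark \ref{lem:2.2}(4) and Proposition \ref{lem2.3}, it suffices to show the following: for every $H\in\mathcal{H}_{M_\mathcal{R}}$ and every $H'\in\mathcal{H}_{M_\mathcal{R}}$,
$$\mathrm{Hom}_\mathcal{R}(H,H'[t])=0\quad\text{for } t>d+p+s.$$
Since $i$ is an equivalence, this is the same as $\mathrm{Hom}_\mathcal{S}(iH,iH'[t])=0$.

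\textbf{Step 1: locate $iH$ and $iH'$ in the $t$-structure of $\mathcal{S}$.} By Lemma \ref{lem0.2} applied in $\mathcal{S}$ with $P=M_\mathcal{S}$,
$$\mathrm{Hom}_\mathcal{S}(M_\mathcal{S}[-\ell],iH)=\mathrm{Hom}_\mathcal{R}(i^{-1}M_\mathcal{S}[-\ell],H).$$
The object $i^{-1}M_\mathcal{S}$ is a compact generator of $\mathcal{R}$, so it lies in $\langle M_\mathcal{R}\rangle$, and therefore $\mathcal{S}$-cohomology of $iH$ is bounded; we need the explicit range.

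The cleaner route is to use $i(M_\mathcal{R})$ directly. We have $\mathrm{Hom}_\mathcal{S}(i M_\mathcal{R},iH[j])=0$ for $j\neq0$, and $\mathrm{projdim}_\mathcal{S}\,iM_\mathcal{R}=p$ with $\mathrm{sup}\,iM_\mathcal{R}=s$. Since $iM_\mathcal{R}\in\mathcal{S}^-$ has finite projective dimension, Proposition \ref{lem2.5}(4) gives
$$iM_\mathcal{R}\in\mathcal{P}_\mathcal{S}[-s]\ast\cdots\ast\mathcal{P}_\mathcal{S}[p].$$
Dually, $M_\mathcal{S}$ is built from $iM_\mathcal{R}$ in finitely many shifts. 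Combining these should give
$$iH\in\mathcal{S}^{\geq -p}\cap\mathcal{S}^{\leq s'}$$
for suitable bounds. The key point is that $\mathrm{inf}\,iH\geq -p$: indeed, $\mathrm{Hom}_\mathcal{S}(M_\mathcal{S}[j],iH)$ vanishes for $j>p$, which one sees by resolving through $iM_\mathcal{R}$ and using that $iH\in (iM_\mathcal{R})^{\perp_{\neq0}}$.

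\textbf{Step 2: the Hom-vanishing estimate.} Since $iH\in\mathcal{S}^{\mathrm b}$ (after a shift), we get $\mathrm{projdim}_\mathcal{S}\,iH\leq d-\mathrm{inf}\,iH\leq d+p$. Applying Proposition \ref{lem2.3}(2) with $U=iH'$ gives
$$\mathrm{Hom}_\mathcal{S}(iH,iH'[t])=0\quad\text{for } t>d+p+\mathrm{sup}\,iH'.$$
It remains to bound $\mathrm{sup}\,iH'\leq s$. This follows from the description of $iH'$ as a coproduct-and-extension closure: $H'$ is a quotient of $\mathrm{Add}(M_\mathcal{R})$ in the heart, so $\mathcal{H}^{0}$ of an sppj map exhibits $iH'$ as a cone of maps between objects of $\mathrm{Add}(iM_\mathcal{R})$ truncated. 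More precisely, $H'\in\mathcal{R}^{\leq0}=\overline{\langle M_\mathcal{R}\rangle}^{(-\infty,0]}$, so
$$iH'\in\overline{\langle iM_\mathcal{R}\rangle}^{(-\infty,0]}\subseteq\mathcal{S}^{\leq s},$$
since $\mathcal{S}^{\leq s}$ is closed under coproducts, extensions and positive shifts. This yields the desired inequality.

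For the consequence, apply the inequality to $i$ and to a quasi-inverse $i^{-1}$. Both constants are finite: $i(M_\mathcal{R})$ is compact, hence lies in $\langle M_\mathcal{S}\rangle$, hence has finite $\mathrm{sup}$ and finite projective dimension, since $M_\mathcal{S}$ has projective dimension $0$ by Proposition \ref{lem2.3}.

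The main obstacle is Step 1's bound $\mathrm{inf}\,iH\geq -p$; equivalently, one needs the symmetric statement for $i^{-1}$. I would try to prove it by using that $\mathrm{Hom}_\mathcal{S}(iM_\mathcal{R},-)$ detects $\mathcal{S}^{\geq}$-membership up to the shift $p$, via the triangle filtration of $iM_\mathcal{R}$ from Proposition \ref{lem2.5}(4) together with the fact that $iM_\mathcal{R}$ generates.
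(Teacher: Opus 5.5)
\textbf{Gap: the bound $\mathrm{inf}\,iH\ge -p$ in Step 1 is never proved.} Your reduction and Step 2 are sound:
\begin{itemize}
\item testing $\mathrm{Hom}_\mathcal{R}(H,H'[t])$ against heart objects $H'$ via Proposition \ref{lem2.3}(3) is correct;
\item $iH\in\mathcal{S}^{\mathrm b}$ follows from $i^{-1}M_\mathcal{S}\in\langle M_\mathcal{R}\rangle$, which gives $\mathrm{projdim}_\mathcal{S}iH\le d-\mathrm{inf}\,iH$;
\item $\mathrm{sup}\,iH'\le s$ follows from $\mathcal{R}^{\le0}=\overline{\langle M_\mathcal{R}\rangle}^{(-\infty,0]}$.
\end{itemize}
Testing against $H'$ rather than against $P\in\mathrm{Add}(M_\mathcal{R})$, as the paper does, also avoids needing $\mathrm{projdim}_\mathcal{R}H<\infty$ in advance, which the last clause of Proposition \ref{lem2.3} presupposes.

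Step 1 is exactly where the hypothesis on $p=\mathrm{projdim}_\mathcal{S}i(M_\mathcal{R})$ enters, and you flag it as the main obstacle and only say the bound ``should'' follow. The route you suggest also runs the wrong way. The filtration $iM_\mathcal{R}\in\mathcal{P}_\mathcal{S}[-s]\ast\cdots\ast\mathcal{P}_\mathcal{S}[p]$ controls $\mathrm{Hom}_\mathcal{S}(iM_\mathcal{R},-)$ in terms of the $t$-structure of $\mathcal{S}$. What you need is control of $\mathrm{Hom}_\mathcal{S}(M_\mathcal{S}[t],-)$ on $i(\mathcal{R}^{\ge0})$, i.e.\ information about where $i^{-1}M_\mathcal{S}$ sits in $\mathcal{R}$. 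Knowing only that $M_\mathcal{S}$ is built from finitely many shifts of $iM_\mathcal{R}$ gives boundedness of $iH$, but no bound in terms of $p$.

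\textbf{The missing step.} You had the right identity, $\mathrm{Hom}_\mathcal{S}(M_\mathcal{S}[-\ell],iH)\cong\mathrm{Hom}_\mathcal{R}(i^{-1}M_\mathcal{S}[-\ell],H)$, before switching to the ``cleaner route''. Continue from it as follows.
\begin{enumerate}
\item Apply the definition of $\mathrm{projdim}_\mathcal{S}iM_\mathcal{R}=p$ with $U=M_\mathcal{S}\in\mathcal{S}^{\le0}$ (so $\mathrm{sup}\,M_\mathcal{S}\le0$). This gives $\mathrm{Hom}_\mathcal{S}(iM_\mathcal{R},M_\mathcal{S}[t])=0$ for $t>p$.
\item Transport along $i^{-1}$: $\mathrm{Hom}_\mathcal{R}(M_\mathcal{R},i^{-1}M_\mathcal{S}[t])=0$ for $t>p$. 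By Lemma \ref{lem0.2}, $\mathrm{sup}\,i^{-1}M_\mathcal{S}\le p$.
\item For $t>p$ this puts $i^{-1}M_\mathcal{S}[t]$ in $\mathcal{R}^{\le -1}$. That aisle is left orthogonal to $H\in\mathcal{R}^{\ge0}$; this is where the $t$-structure of the silting object $M_\mathcal{R}$ is used.
\item Hence $\mathrm{Hom}_\mathcal{S}(M_\mathcal{S}[t],iH)=0$ for $t>p$, i.e.\ $\mathrm{inf}\,iH\ge -p$.
\end{enumerate}
This is precisely the paper's step $\mathrm{sup}\,j(M_\mathcal{S})\le\mathrm{projdim}_\mathcal{S}i(M_\mathcal{R})$ followed by its deduction of $\mathrm{inf}\,i(H)$. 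With it inserted, your argument is complete and gives the stated inequality. The ``consequence'' part, applying the inequality to $i$ and $i^{-1}$ with finite constants coming from compactness, is fine as written.
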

\begin{proof} Let $j:\mathcal{S}\rightarrow\mathcal{R}$ be the inverse of $i$. Clearly, $i(M_\mathcal{R})\in\mathcal{S}^c\subseteq\mathcal{S}^-$, it follows from Proposition \ref{lem2.5} that $\mathrm{projdim}_\mathcal{S}i(M_\mathcal{R})<\infty$ and $\mathrm{sup}i(M_\mathcal{R})<\infty$.
As $\mathrm{Hom}_\mathcal{R}(M_\mathcal{R},j(M_\mathcal{S})[t])\cong
\mathrm{Hom}_\mathcal{R}(i(M_\mathcal{R}),M_\mathcal{S}[t])$ for $t\in\mathbb{Z}$, $\mathrm{sup}j(M_\mathcal{S})\leq\mathrm{projdim}_\mathcal{S}i(M_\mathcal{R})$ by Lemma \ref{lem0.2} and Proposition \ref{lem2.3}.
Let $H\in\mathcal{H}_{M_\mathcal{R}}$. Since $\mathrm{Hom}_\mathcal{S}(M_\mathcal{S}[t],i(H))\cong
\mathrm{Hom}_\mathcal{R}(j(M_\mathcal{S})[t],H)\ \textrm{for}\ t>\mathrm{sup}j(M_\mathcal{S})$, one has $\mathrm{inf}i(H)\geq-\mathrm{projdim}_\mathcal{S}i(M_\mathcal{R})$.
For $P\in\mathrm{Add}(M_\mathcal{R})$ and $i\in\mathbb{Z}$, as $$\mathrm{Hom}_\mathcal{R}(H,P[t])\cong\mathrm{Hom}_\mathcal{S}(i(H),i(P)[t])\ \textrm{for}\ t>\mathrm{projdim}_\mathcal{S}i(H)+\mathrm{sup}i(P),$$it implies that $\mathrm{projdim}_\mathcal{R}H\leq\mathrm{projdim}_\mathcal{S}i(H)+\mathrm{sup}i(P)
\leq\mathrm{gpd}(\mathcal{S},M_\mathcal{S})+\mathrm{projdim}_\mathcal{S}i(M_\mathcal{R})
+\mathrm{sup}i(M_\mathcal{R})$, the desired inequality holds true.
\end{proof}

Let $\mathcal{A}$ be an abelian category whose derived category $\mathrm{D}(\mathcal{A})$ has $\operatorname{Hom}$ sets and arbitrary (small) coproducts. An object $T$ of $\mathcal{A}$ is called \emph{$n$-tilting} ($n\geq 0$) if
$T^{(\Lambda)}$ is the coproduct of $\Lambda$ copies of $T$ both in $\mathcal{A}$ and $\mathrm{D}(\mathcal{A})$ for each set $\Lambda$;
 $\operatorname{Ext}_{\mathcal{A}}^k(T,T^{(\Lambda)}) = 0$ for all $k > 0$ and sets $\Lambda$; $\mathrm{projdim}_\mathcal{A}T\leq n$;
there is a generating class $\mathcal{G}$ of $\mathcal{A}$ such that, for each $G \in \mathcal{G}$, there is an exact sequence $0 \to G \to T^0 \to \cdots \to T^n \to 0$
with each $T^k\in\operatorname{Add}(T)$. $T$ is called a \emph{classical tilting} object if it is a $n$-tilting object for some $n\geq0$ and is compact as an object of $\mathcal{D}(\mathcal{A})$. In this case, there is a triangulated equivalence
\[
i:\mathrm{D}(\mathcal{H}_T) \xrightarrow{\cong}\mathrm{D}(\mathcal{A})
\]
  where $\mathcal{H}_T$ is the heart of the associated $t$-structure $\tau_T = (T^{\perp>0}, T^{\perp<0})$ in $\mathrm{D}(\mathcal{A})$ by \cite[Proposition 2.3]{FMS};
$T$ is a compact generator of $\mathrm{D}(\mathcal{H}_T)$ and $i(T)=T$ is a compact silting object in $\mathrm{D}(\mathcal{A})$ by \cite[Corollary 2.5]{FMS} and \cite[Theoreem 3]{NSZ}.
Hence by Proposition \ref{lem0.4}, $$\mathrm{gpd}(\mathrm{D}(\mathcal{H}_T),T)\leq\mathrm{gpd}(\mathrm{D}(\mathcal{A}),T)
+\mathrm{projdim}_{\mathrm{D}(\mathcal{A})}T.$$ 

\bigskip
\section {\bf Finitistic dimensions of triangulated categories}
Throughout this section, $\mathcal{T}$, $M$ and $\mathcal{H}_M$ are as in the previous section. Our goal is to recall the various (big) finitistic dimensions of $\mathcal{T}$, to compare them, and to relate them to the (big) finitistic dimension of the heart $\mathcal{H}_M$.

 Following \cite{BCRPZ}, the \emph{big finitistic dimension} of $\mathcal{T}$ with respect to $M$ are defined as
$$\mathrm{FPD}(\mathcal{T},M):=\mathrm{sup}\{\mathrm{projdim}_\mathcal{T}X
\hspace{0.03cm}|\hspace{0.03cm}
X\in\mathcal{T}^\mathrm{b}\cap\mathcal{T}^{\geq0}\ \textrm{with}\ \mathrm{projdim}_\mathcal{T}X<\infty\}.$$ As $X[\mathrm{inf}X]\in\mathcal{T}^\mathrm{b}\cap\mathcal{T}^{\geq0}$ for $X\in\mathcal{T}^\mathrm{b}$, and $\mathrm{projdim}_\mathcal{T}X\leq\mathrm{projdim}_\mathcal{T}X+\mathrm{inf}X$ for $X\in\mathcal{T}^\mathrm{b}\cap\mathcal{T}^{\geq0}$, it follows that $\mathrm{FPD}(\mathcal{T},M)=\mathrm{sup}\{\mathrm{projdim}_\mathcal{T}X+\mathrm{inf}X
\hspace{0.03cm}|\hspace{0.03cm}
X\in\mathcal{T}^\mathrm{b}\ \textrm{with}\ \mathrm{projdim}_\mathcal{T}X<\infty\}$.
Following \cite{CCZ}, the \emph{big finitistic dimension} of $\mathcal{T}$ at $M$ is defined to be
$$\mathrm{Findim}(\mathcal{T},M):=\mathrm{inf}\{n\in\mathbb{N}\hspace{0.03cm}|\hspace{0.03cm}
\mathcal{T}^{\geq0}\cap\mathcal{T}^{\mathrm{sb}}\subseteq\bigcup_{m\geq0}\overline{\langle M\rangle}^{[0,m]}[n]\}.$$

The following theorem summarizes the close relationships among these big finitistic dimensions under suitable hypotheses.

\begin{thm}\label{lem3.2} $(1)$ $\mathrm{FPD}(\mathcal{T},M)\leq\mathrm{silp}\mathcal{T}\leq\mathrm{gpd}(\mathcal{T},M)$. If $\mathrm{gpd}(\mathcal{T},M)<\infty$, then $\mathrm{FPD}(\mathcal{T},M)=\mathrm{gpd}(\mathcal{T},M)$, where $\mathrm{silp}\mathcal{T}:=\mathrm{sup}\{\mathrm{injdim}_\mathcal{T}P
\hspace{0.03cm}|\hspace{0.03cm}P\in\mathcal{P}\}$.

$(2)$ $\mathrm{FPD}(\mathcal{T},M)
\leq\mathrm{Findim}(\mathcal{T},M)\leq
 \mathrm{max}\{0,\mathrm{FPD}(\mathcal{T},M)\}\leq\mathrm{FPD}(\mathcal{T},M)-\mathrm{inf}M$.

$(3)$ If $M$ is tilting, then $\mathrm{Findim}(\mathcal{T},M)=\mathrm{FPD}(\mathcal{T},M)=\mathrm{FPD}(\mathcal{H}_M)=\mathrm{sup}\{\mathrm{projdim}_{\mathcal{H}_M}H
\hspace{0.03cm}|\hspace{0.03cm}
H\in\mathcal{H}_M\ \textrm{with}\ \mathrm{projdim}_{\mathcal{H}_M}H<\infty\}$.
\end{thm}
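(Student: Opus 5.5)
I treat the three parts separately, using Proposition \ref{lem2.3} (projective dimension detected by $\mathrm{Hom}_\mathcal{T}(X,\mathcal{H}_M[i])$, or by $\mathcal{P}[i]$ once it is finite), Proposition \ref{lem3.3} (the dual for injective dimension), and Remark \ref{lem:2.2}(1),(4).

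\textbf{Part (1).} For $\mathrm{FPD}(\mathcal{T},M)\leq\mathrm{silp}\mathcal{T}$, take $X\in\mathcal{T}^\mathrm{b}\cap\mathcal{T}^{\geq0}$ with $n=\mathrm{projdim}_\mathcal{T}X<\infty$. By Proposition \ref{lem2.3} there is $P\in\mathcal{P}$ with $\mathrm{Hom}_\mathcal{T}(X,P[n])\neq0$. Now $X[-n]\in\mathcal{T}^\mathrm{b}$ has $\inf X[-n]\geq n$, so Proposition \ref{lem3.3}(2) with $V=X$ gives $\mathrm{injdim}_\mathcal{T}P\geq n$.

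For $\mathrm{silp}\mathcal{T}\leq\mathrm{gpd}(\mathcal{T},M)$, suppose $\mathrm{Hom}_\mathcal{T}(H[i],P)\neq0$ with $H\in\mathcal{H}_M$. Then $\mathrm{Hom}_\mathcal{T}(H,P[-i])\neq0$. Since $P\in\mathcal{T}^{\leq0}$, Proposition \ref{lem2.3}(2) gives $-i\leq\mathrm{projdim}_\mathcal{T}H\leq\mathrm{gpd}$, hence $\mathrm{injdim}_\mathcal{T}P\leq\mathrm{gpd}$.

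If $\mathrm{gpd}<\infty$, every $H\in\mathcal{H}_M\subseteq\mathcal{T}^\mathrm{b}\cap\mathcal{T}^{\geq0}$ has finite projective dimension. So $\mathrm{gpd}=\sup_H\mathrm{projdim}H\leq\mathrm{FPD}$ by Remark \ref{lem:2.2}(4), which gives equality.

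\textbf{Part (2).} By Remark \ref{lem:2.2}(1), $\mathcal{T}^\mathrm{sb}$ consists of the objects of $\mathcal{T}^-$ of finite projective dimension.

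\emph{First inequality.} Let $X\in\mathcal{T}^{\geq0}\cap\mathcal{T}^\mathrm{b}$ with finite projective dimension. Then $X\in\mathcal{T}^\mathrm{sb}$. If $X\in\overline{\langle M\rangle}^{[0,m]}[n]$, then, because $\mathrm{Hom}(M,\mathcal{H}_M[\neq0])=0$ and the class is closed under coproducts, extensions and summands, $\mathrm{Hom}_\mathcal{T}(X,\mathcal{H}_M[i])=0$ for $i>n$. Hence $\mathrm{projdim}X\leq n$, and $\mathrm{FPD}\leq\mathrm{Findim}$.

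\emph{Second inequality.} Take $X\in\mathcal{T}^{\geq0}\cap\mathcal{T}^\mathrm{sb}$ with $s=\sup X$ and $d=\mathrm{projdim}X+s$. Proposition \ref{lem2.5}(4) places $X$ in $\mathcal{P}[-s]\ast\cdots\ast\mathcal{P}[-s+d]$, and this lies in $\overline{\langle M\rangle}^{[0,m]}[n]$ as soon as $-s+d\leq n$, i.e.\ $\mathrm{projdim}X\leq n$ (with $m$ large). The problem is that $X$ need not be bounded below. I would handle this as in the proof of Proposition \ref{lem1.1}: write $X$ as a homotopy colimit of truncations, or better, use $\inf X\geq0$ together with a truncated sppj resolution to replace $X$ by a bounded object $X'$ with the same projective dimension. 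This gives $\mathrm{projdim}X\leq\max\{0,\mathrm{FPD}\}$, and $n\geq0$ is forced by the definition of $\mathrm{Findim}$.

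\emph{Last inequality.} Note $\mathrm{inf}M\leq0$, and $\mathrm{FPD}\geq\mathrm{projdim}M$-type bounds: the object $M[\inf M]$ has projective dimension $\inf M\cdot(-1)$ up to sign, giving $\mathrm{FPD}\geq\inf M$. Then $\max\{0,\mathrm{FPD}\}\leq\mathrm{FPD}-\inf M$.

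\textbf{Main obstacle.} The bounded-below reduction in the second inequality is where I expect the difficulty. My first attempt would use the sppj resolution and Remark \ref{lem:2.1}(4) to cut at degree $\mathrm{projdim}X$.

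\textbf{Part (3).} If $M$ is tilting, then $M\in\mathcal{H}_M$, so $\inf M=0$ and the chain in (2) collapses to give $\mathrm{Findim}=\mathrm{FPD}$. For $\mathrm{FPD}(\mathcal{T},M)=\mathrm{FPD}(\mathcal{H}_M)$, note that $\mathcal{P}=\mathrm{Add}(M)$ is the projectives of $\mathcal{H}_M$, and the realization functor gives $\mathrm{Ext}^i_{\mathcal{H}_M}(H,H')=\mathrm{Hom}_\mathcal{T}(H,H'[i])$ whenever $H$ has a finite $\mathcal{P}$-resolution. So projective dimensions in $\mathcal{H}_M$ and in $\mathcal{T}$ agree for such $H$, via Proposition \ref{lem2.5}.

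Conversely, let $X\in\mathcal{T}^\mathrm{b}\cap\mathcal{T}^{\geq0}$ have finite projective dimension. Since $X\in\mathcal{P}[-s]\ast\cdots\ast\mathcal{P}[-s+d]$ is a finite complex of projectives, I would show $\mathrm{projdim}X$ is bounded by $\mathrm{projdim}_{\mathcal{H}_M}\mathcal{H}^{\inf X}_M(X)$ plus a shift, or more simply dévissage through the cohomology objects of $X$. This is the standard argument for complexes over a ring and uses the same bounded-cohomology induction as Proposition \ref{lem2.3}.
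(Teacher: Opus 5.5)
\textbf{Parts (1) and (2).} These follow the paper's argument. Your proof of $\mathrm{silp}\mathcal{T}\leq\mathrm{gpd}(\mathcal{T},M)$, which the paper leaves implicit, is correct, with one adjustment: use the defining vanishing of $\mathrm{projdim}_\mathcal{T}H$ against $U=P\in\mathcal{T}^{\leq0}\subseteq\mathcal{T}^-$ rather than Proposition~\ref{lem2.3}(2), since $P$ need not be bounded.

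The ``main obstacle'' you flag in the second inequality of (2) does not exist. Since $\mathcal{T}^{\mathrm{sb}}\subseteq\mathcal{T}^-$ and $\mathcal{T}^{\geq0}\subseteq\mathcal{T}^+$, any $X\in\mathcal{T}^{\mathrm{sb}}\cap\mathcal{T}^{\geq0}$ already lies in $\mathcal{T}^{\mathrm{b}}$ and has finite projective dimension. So $\mathrm{projdim}_\mathcal{T}X\leq\mathrm{projdim}_\mathcal{T}X+\mathrm{inf}X\leq\mathrm{FPD}(\mathcal{T},M)$ immediately, and no homotopy colimit or replacement is needed. Likewise $\mathrm{projdim}_\mathcal{T}(M[\mathrm{inf}M])=\mathrm{inf}M$ exactly, by Remark~\ref{lem:2.1}(3), with no sign ambiguity. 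The easy half of (3), $\mathrm{FPD}(\mathcal{H}_M)\leq\mathrm{FPD}(\mathcal{T},M)$, is also fine.

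\textbf{The gap in (3).} The genuine gap is the reverse inequality $\mathrm{FPD}(\mathcal{T},M)\leq\mathrm{FPD}(\mathcal{H}_M)$. Both of your suggestions go through cohomology objects of $X$, and these need not have finite projective dimension. So they are not competitors for $\mathrm{FPD}(\mathcal{H}_M)$, and d\'evissage along truncation triangles only gives $\mathrm{projdim}_\mathcal{T}X\leq\max_\ell\{\mathrm{projdim}_\mathcal{T}\mathcal{H}^\ell_M(X)-\ell\}$, which is typically $\infty$.

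Concretely, take $\mathrm{D}(R)$ with $R=k[x]/(x^2)$ and $M=R$. The perfect complex $X=(R\xrightarrow{x}R)$ in degrees $-1,0$ has $\mathrm{projdim}X+\mathrm{inf}X=1-1=0$. Yet $\mathcal{H}^{-1}(X)\cong\mathcal{H}^0(X)\cong k$ has infinite projective dimension. The induction in Proposition~\ref{lem2.3} runs over the test object, not the source, so it does not help here.

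The missing idea, which the paper uses, is to pass to a \emph{syzygy} rather than a cohomology object:
\begin{enumerate}
\item Normalize $\mathrm{sup}X=0$, and let $i=\mathrm{inf}X$ and $n=\mathrm{projdim}_\mathcal{T}X$.
\item Take a resolution $X_{s+1}\rightarrow P_s\rightarrow X_s\rightarrow X_{s+1}[1]$ with $X_0=X$, $P_s\in\mathcal{P}$ and $X_{n+1}=0$.
\item Since $M$ is tilting, $\mathcal{P}\subseteq\mathcal{H}_M$. Hence the iterated objects $T_s\in\mathcal{P}\ast\mathcal{P}[1]\ast\cdots\ast\mathcal{P}[s]$ have cohomology only in degrees $[-s,0]$.
\item The triangle $X_{-i}[-i-1]\rightarrow T_{-i-1}\rightarrow X\rightarrow X_{-i}[-i]$ then forces $X_{-i}\in\mathcal{H}_M$.
\item The tail $0\rightarrow P_n\rightarrow\cdots\rightarrow P_{-i}\rightarrow X_{-i}\rightarrow0$ is exact in $\mathcal{H}_M$. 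By dimension shifting (Remark~\ref{lem:2.1}(4)), $X_{-i}$ is a heart object of finite projective dimension exactly $n+i=\mathrm{projdim}_\mathcal{T}X+\mathrm{inf}X$.
\end{enumerate}
Over a ring, this object is $\mathrm{coker}(P^{i-1}\rightarrow P^{i})$ for a bounded projective model of $X$. It contains $\mathcal{H}^{i}(X)$ but differs from it (in the example it is $R$ itself), which is exactly why arguing with cohomology objects fails.
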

\begin{proof} (1) Let $X\in\mathcal{T}^\mathrm{b}$ with $\mathrm{projdim}_\mathcal{T}X=n<\infty$. By Proposition \ref{lem2.3}, one has$$\mathrm{Hom}_\mathcal{T}(X[-n-\mathrm{inf}X+\mathrm{inf}X],P)
\cong\mathrm{Hom}_\mathcal{T}(X,P[n])\neq0$$ for some $P\in\mathcal{P}$, it implies that $\mathrm{injdim}_\mathcal{T}P\geq \mathrm{projdim}_\mathcal{T}X+\mathrm{inf}X$. So $\mathrm{FPD}(\mathcal{T},M)\leq\mathrm{silp}\mathcal{T}\leq\mathrm{gpd}(\mathcal{T},M)$, the equality is clear.

(2)  Let $\mathrm{Findim}(\mathcal{T},M)=d$ and $X\in\mathcal{T}^\mathrm{b}\cap\mathcal{T}^{\geq0}$ with $\mathrm{projdim}_\mathcal{T}X<\infty$. As $\mathcal{T}^{\mathrm{sb}}=\{X\in\mathcal{T}^-\hspace{0.03cm}|\hspace{0.03cm}
\mathrm{projdim}_\mathcal{T}X<\infty\}$,
 $X\in\bigcup_{m\geq0}\overline{\langle M\rangle}^{[0,m]}[d]$. For $p\geq0$, one has $\mathrm{Hom}_\mathcal{T}(M[-p],\mathcal{H}_M[t])=0$ for $t>0$, it follows that $\mathrm{Hom}_\mathcal{T}(X[-d],\mathcal{H}_M[t])=0$ for all $t>0$. Hence $\mathrm{projdim}_\mathcal{T}X\leq d$ by Proposition \ref{lem2.3}, it yields the first inequality.
Let $\mathrm{FPD}(\mathcal{T},M)=n$ and $X\in\mathcal{T}^\mathrm{sb}\cap\mathcal{T}^{\geq0}$. Then $X\in\mathcal{T}^\mathrm{b}$. Set $s=\mathrm{sup}X$ and $i=\mathrm{inf}X\geq0$. Then $-s\leq-s+i\leq\mathrm{projdim}_\mathcal{T}X+i\leq n$, so $0\leq\mathrm{projdim}_\mathcal{T}X+s\leq n+s$. By Proposition \ref{lem2.5},
$X\in\mathcal{P}[-s]\ast\cdots\ast\mathcal{P}[-s+n+s]$. If $n<0$, then
$X\in\mathcal{P}[-s]\ast\cdots\ast\mathcal{P}[n]\subseteq\bigcup_{m\geq0}\overline{\langle M\rangle}^{[0,m]}$.
If  $n\geq0$, then $X\in(\mathcal{P}[-s-n]\ast\cdots\ast\mathcal{P})[n]\subseteq\bigcup_{m\geq0}\overline{\langle M\rangle}^{[0,m]}[n]$. The second inequality holds true. By Remark \ref{lem:2.1}(2), $\mathrm{FPD}(\mathcal{T},M)\geq\mathrm{inf}M$, it yields the last inequality.

(3) Let $M$ be tilting and $H\in\mathcal{H}_M$. Then $\mathrm{prodim}_{\mathcal{H}_M}H= \mathrm{projdim}_\mathcal{T}H$ by \cite[Lemma 4.8]{Y1}, and so the inequality $\mathrm{FPD}(\mathcal{T},M)\geq\mathrm{FPD}(\mathcal{H}_M)$ is valid. Conversely,
let $X\in\mathcal{T}^\mathrm{b}$ with $\mathrm{projdim}_\mathcal{T}X=n<\infty$. We may assume that $\mathrm{sup}X=0$ and $i=\mathrm{inf}X\leq-1$. By Proposition \ref{lem2.5} and Remark \ref{lem:0.1}(1), one can choose a sppj resolution of $X$ $$P_n\rightarrow P_{n-1}\rightarrow\cdots\rightarrow P_0\rightarrow X$$  with the corresponding exact sequence $X_{s+1}\rightarrow P_s\rightarrow X_s\rightarrow X_{s+1}[1]$, such that $\mathrm{sup}X_s=0$ for $s\geq0$, $X_{n+1}=0$ and $X_{n-1}\not\in\mathcal{P}$, where $X_0=X$. For $s\geq0$, one has the following commutative diagram of exact triangles
\begin{center} $\xymatrix@C=25pt@R=20pt{
P_{s}[s-1] \ar@{=}[d]\ar[r]& X_{s}[s-1]\ar[d]\ar[r] &X_{s+1}[s]\ar[d]\ar[r]&P_{s}[s]\ar@{=}[d] \\
P_{s}[s-1] \ar[r]& T_{s-1}\ar[d]\ar[r] &T_{s}\ar[d]\ar[r]&P_{s}[s] \\
     & X_0\ar[d]\ar@{=}[r]& X_0\ar[d]  \\
     & X_{s}[s]\ar[r]& X_{s+1}[s+1]  }$
\end{center}where $T_s\in\mathcal{P}\ast\mathcal{P}[1]\ast\cdots\ast\mathcal{P}[s]$ for $0\leq s\leq n$. Since $M$ is tilting, $\mathrm{sup}T_s\leq0$ and $\mathrm{inf}T_{s}\geq-s$, it follows from the exact triangle $X_{-i}[-i-1]\rightarrow T_{-i-1}\rightarrow X\rightarrow X_{-i}[-i]$ that $\mathrm{inf}X_{-i}[-i-1]=i+1$, so $X_{-i}\in\mathcal{H}_M$
as $\mathrm{sup}X_{-i}[-i-1]\leq i+1$. If $n=-i$, then $X_{-i}=X_n\in\mathcal{P}$ and $\mathrm{projdim}_{\mathcal{H}_M}X_{-i}=0=n+i$. If $n>-i$, then $0\rightarrow X_{s+1}\rightarrow P_s\rightarrow X_s\rightarrow 0$ is exact in $\mathcal{H}_M$ for $s\geq-i$, so the sequence $0\rightarrow P_n\rightarrow\cdots\rightarrow P_{-i}\rightarrow X_{-i}\rightarrow0$ implies that $\mathrm{projdim}_{\mathcal{H}_M}X_{-i}=n+i$. Thus $\mathrm{FPD}(\mathcal{T},M)=\mathrm{FPD}(\mathcal{H}_M)$, the equalities follow from (2).
\end{proof}

Let $R$ be a commutative noetherian ring and $\mathcal{T}$ be $R$-linear. Following \cite{BIK1}, the category $\mathcal{T}$ is called \emph{noetherian} if  the $R$-module $\bigoplus_{n\in\mathbb{Z}}\mathrm{Hom}_\mathcal{T}(C,C[n])$ is finitely generated for any $C\in\mathcal{T}^c$. Equivalently, the $R$-module $\bigoplus_{n\in\mathbb{Z}}\mathrm{Hom}_\mathcal{T}(M,M[n])$ is finitely generated.

\begin{lem}\label{lem4.5} Let $\mathcal{T}$ be $R$-linear and noetherian. Then $\mathcal{I}$ is closed under coproducts in $\mathcal{T}$.
\end{lem}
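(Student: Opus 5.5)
The plan is to realize $M^\ast$ as a special case of a family of "Matlis-type" dual objects indexed by injective $R$-modules. For this family, coproducts are computed by direct sums of injective modules. Noetherianity of $\mathcal{T}$ and of $R$ makes those direct sums behave well.

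\emph{Step 1: the objects $T_J$.} For an injective $R$-module $J$, the functor
\[
\mathcal{T}^{\mathrm{op}}\to\mathrm{Ab},\qquad X\mapsto \mathrm{Hom}_R(\mathrm{Hom}_\mathcal{T}(M,X),J)
\]
is cohomological, because $J$ is injective. It sends coproducts to products, because $M$ is compact. By Brown representability it is therefore represented by an object $T_J$. The assignment $J\mapsto T_J$ is functorial and sends products of injective modules to products in $\mathcal{T}$.

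Put $E:=\mathrm{Hom}_\mathbb{Z}(R,\mathbb{Q}/\mathbb{Z})$. Adjunction gives $\mathrm{Hom}_R(N,E)\cong \mathrm{Hom}_\mathbb{Z}(N,\mathbb{Q}/\mathbb{Z})$, so $T_E\cong M^\ast$. Every injective $R$-module $J$ embeds in some $E^\Lambda$ and is then a direct summand of it. Hence $T_J$ is a direct summand of $T_{E^\Lambda}\cong (M^\ast)^\Lambda$, and so $T_J\in\mathcal{I}$. Conversely, each object of $\mathcal{I}$ is a direct summand of some $T_{E^\Lambda}$. (If $\mathcal{I}$ is read literally as products rather than their summands, I would first note that $\mathcal{I}$ is closed under summands, and otherwise prove the statement for $\mathrm{smd}$.)

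\emph{Step 2: coproducts of the $T_J$.} Let $\{J_\lambda\}$ be injective $R$-modules. Since $R$ is noetherian, $\bigoplus J_\lambda$ is injective (Bass–Papp), so $T_{\bigoplus J_\lambda}$ is defined. The inclusions $J_\lambda\to\bigoplus J_\lambda$ induce a canonical morphism $\varphi:\coprod T_{J_\lambda}\to T_{\bigoplus J_\lambda}$.

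To see that $\varphi$ is an isomorphism, it suffices to apply $\mathrm{Hom}_\mathcal{T}(M[n],-)$ for all $n$, because $M$ is a compact generator. On the source, compactness of $M$ gives
\[
\bigoplus_\lambda \mathrm{Hom}_R(\mathrm{Hom}_\mathcal{T}(M,M[n]),J_\lambda).
\]
On the target one gets $\mathrm{Hom}_R(\mathrm{Hom}_\mathcal{T}(M,M[n]),\bigoplus J_\lambda)$. The noetherian hypothesis says $\bigoplus_n \mathrm{Hom}_\mathcal{T}(M,M[n])$ is finitely generated over $R$, so each $\mathrm{Hom}_\mathcal{T}(M,M[n])$ is finitely generated. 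For finitely generated (indeed finitely presented, as $R$ is noetherian) modules, $\mathrm{Hom}_R$ commutes with direct sums. Hence the two sides agree via $\varphi$.

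This is the main step, and it is where both hypotheses are used. The delicate point is checking that the induced map is exactly the canonical comparison map. That follows from naturality of the Yoneda isomorphisms.

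\emph{Step 3: conclusion.} Let $I_\lambda\in\mathcal{I}$. By Step 1, each $I_\lambda$ is a direct summand of some $T_{J_\lambda}$ with $J_\lambda=E^{\Lambda_\lambda}$. Then $\coprod I_\lambda$ is a direct summand of $\coprod T_{J_\lambda}$. By Step 2, $\coprod T_{J_\lambda}\cong T_{\bigoplus J_\lambda}$, which lies in $\mathcal{I}$ by Step 1 since $\bigoplus J_\lambda$ is injective. Therefore $\coprod I_\lambda\in\mathcal{I}$, i.e.\ $\mathcal{I}$ is closed under coproducts.
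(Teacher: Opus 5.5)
Your proof is correct, and it takes a genuinely different route from the paper.

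\textbf{The paper's route.} The paper works over the possibly noncommutative ring $\mathrm{End}_\mathcal{T}(M)$, which is noetherian because it is module-finite over $R$. It observes that $\mathrm{Hom}_\mathcal{T}(M,\coprod_\lambda I_\lambda)\cong\bigoplus_\lambda\mathrm{Hom}_\mathcal{T}(M,I_\lambda)$ is an injective $\mathrm{End}_\mathcal{T}(M)$-module. Hence applying $\mathrm{Hom}_\mathcal{T}(M,-)$ to the canonical triangle $\coprod_\lambda I_\lambda\to\prod_\lambda I_\lambda\to I\to(\coprod_\lambda I_\lambda)[1]$ gives a split short exact sequence of modules. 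From this it concludes that $\coprod_\lambda I_\lambda$ is a retract of $\prod_\lambda I_\lambda$.

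\textbf{Your route.} You stay over the commutative ring $R$ and describe $\mathcal{I}$ through the Matlis-type duals $T_J$. You then identify the coproduct explicitly as $T_{\bigoplus_\lambda J_\lambda}$. You check the comparison map on every shift $M[n]$, using Bass--Papp over $R$ and the finite generation of each $\mathrm{Hom}_\mathcal{T}(M,M[n])$ over $R$.

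\textbf{Comparison.} The paper's argument is shorter. However, it only looks at the degree-zero functor $\mathrm{Hom}_\mathcal{T}(M,-)$, and it must pass from a splitting of $\mathrm{End}_\mathcal{T}(M)$-modules to a retraction in $\mathcal{T}$, a step it states without further detail. Your argument has four advantages:
\begin{itemize}
\item it needs no such lifting step;
\item it checks every shift, not only degree zero, which matters because for a silting but non-tilting $M$ the groups $\mathrm{Hom}_\mathcal{T}(M[n],M^\ast)$ with $n<0$ need not vanish;
\item it never uses the silting property or the heart;
\item it gives a concrete model of the coproduct as an object of $\mathcal{I}$.
\end{itemize}

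\textbf{One correction to your parenthetical remark.} If $\mathcal{I}$ meant literal products of copies of $M^\ast$, it would not be closed under summands, and the lemma itself would fail. Take $\mathrm{D}(\mathbb{Z})$ with $M=\mathbb{Z}$, so $M^\ast=\mathbb{Q}/\mathbb{Z}$. The torsion group $\bigoplus_{\mathbb{N}}\mathbb{Q}/\mathbb{Z}$ is not isomorphic to any product of copies of $\mathbb{Q}/\mathbb{Z}$: finite products have finite $p$-torsion, and infinite products contain elements of infinite order. So the summand-closed reading is the only one under which the statement holds. The paper uses it too, since it deduces membership in $\mathcal{I}$ from being a retract. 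Adopt that reading outright rather than proposing to prove closure under summands.
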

\begin{proof} There is a homomorphism of rings $R\rightarrow\mathrm{End}(M)$. As $\mathcal{T}$ is noetherian, it follows from \cite[P.131, Exercise 7]{AF} that  $\mathrm{End}(M)$ is a noetherian ring. Let $\{I_\lambda\}_{\lambda\in\Lambda}$ be a family of objects in $\mathcal{I}$. Consider the canonical exact triangle $$\coprod_{\lambda\in\Lambda}I_\lambda\rightarrow\prod_{\lambda\in\Lambda}I_\lambda\rightarrow I\rightarrow\coprod_{\lambda\in\Lambda}I_\lambda[1].$$
 As
 $\mathrm{Hom}_\mathcal{T}(M,\coprod_{\lambda\in\Lambda}I_\lambda)
\cong\coprod_{\lambda\in\Lambda}\mathrm{Hom}_\mathcal{T}(M,I_\lambda)
\cong\coprod_{\lambda\in\Lambda}\mathrm{Hom}_\mathcal{T}(M,\mathcal{H}^0_M(I_\lambda))$ is an injective $\mathrm{End}(M)$-module, it follows from \cite[Lemma 9.1]{B} that
$$0\rightarrow\mathrm{Hom}_\mathcal{T}(M,\coprod_{\lambda\in\Lambda}I_\lambda)
\rightarrow\mathrm{Hom}_\mathcal{T}(M,\prod_{\lambda\in\Lambda}I_\lambda)\rightarrow \mathrm{Hom}_\mathcal{T}(M,I)\rightarrow0$$ is split, so $\coprod_{\lambda\in\Lambda}I_\lambda$ is a retract of $\prod_{\lambda\in\Lambda}I_\lambda$ and $\coprod_{\lambda\in\Lambda}I_\lambda\in\mathcal{I}$.
\end{proof}

\begin{lem}\label{lem3.6} Assume that $\mathcal{T}$ is  $R$-linear and noetherian. Let $\{X_\lambda\}_{\lambda\in\Lambda}$ be a collection of objects in $\mathcal{T}$ and set $X=\coprod_{\lambda\in\Lambda}X_\lambda$. If $\mathrm{injdim}_{\mathcal{T}}X_\lambda\leq n$ for all $\lambda\in\Lambda$ and $\mathrm{inf}X>-\infty$, then
$\mathrm{injdim}_{\mathcal{T}}X\leq n$.
\end{lem}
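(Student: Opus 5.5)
We plan to use the resolution characterization of injective dimension (Proposition \ref{lem3.4}) together with Lemma \ref{lem4.5}. Let $a=\mathrm{inf}X>-\infty$. Since $\mathcal{H}^\ell_M$ commutes with coproducts ($M$ is compact, and Lemma \ref{lem0.2} identifies $\mathrm{Hom}_\mathcal{T}(M[-\ell],-)$ with $\mathrm{Hom}(M,\mathcal{H}^\ell_M(-))$), we have $\mathrm{inf}X_\lambda\geq a$ for every $\lambda$, and each $X_\lambda\in\mathcal{T}^+$. By Remark \ref{lem:2.2}(3), $\mathrm{injdim}_\mathcal{T}X_\lambda\geq\mathrm{inf}X_\lambda$. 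We may discard the $X_\lambda$ that are zero.

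The natural first attempt is to apply Proposition \ref{lem3.4}(4) to each $X_\lambda$ and take coproducts. Each $X_\lambda$ lies in $\mathcal{I}[-\mathrm{inf}X_\lambda-d_\lambda]\ast\cdots\ast\mathcal{I}[-\mathrm{inf}X_\lambda]$, where $d_\lambda=\mathrm{injdim}_\mathcal{T}X_\lambda-\mathrm{inf}X_\lambda\leq n-a$. Padding with zero objects places every $X_\lambda$ in the common class
\[\mathcal{I}[-n]\ast\mathcal{I}[-n+1]\ast\cdots\ast\mathcal{I}[-a].\]
Coproducts of triangles are triangles, so $X$ lies in $\mathrm{Coprod}$-versions of these classes. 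Lemma \ref{lem4.5} says $\mathcal{I}$ is closed under coproducts, hence $X\in\mathcal{I}[-n]\ast\cdots\ast\mathcal{I}[-a]$. There is a subtlety in taking coproducts of extensions: the $\ast$ decompositions are not canonical, but a coproduct of exact triangles is exact, so the coproduct of chosen decompositions works term by term.

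It then remains to show that an object $Y$ in $\mathcal{I}[-n]\ast\cdots\ast\mathcal{I}[-a]$ has $\mathrm{injdim}_\mathcal{T}Y\leq n$. We verify criterion (3) of Proposition \ref{lem3.3}: $\mathrm{Hom}_\mathcal{T}(\mathcal{H}_M[i],Y)=0$ for $i<-n$. For $I\in\mathcal{I}$ and $k$ with $a\leq k\leq n$, Lemma \ref{lem0.2} gives
\[\mathrm{Hom}_\mathcal{T}(H[i],I[-k])\cong\mathrm{Hom}_\mathcal{T}(\mathcal{H}^{k-i}_M(H),I)\cong\mathrm{Hom}_\mathcal{T}(\mathcal{H}^{k+i}_M(H),I),\]
and this vanishes unless $k+i=0$, i.e. $i=-k\geq-n$. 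The identification here is that $\mathrm{Hom}(H[i],I[-k])=\mathrm{Hom}(H[i+k],I)$, which is nonzero only if $H[i+k]$ has cohomology in degree $0$. So for $i<-n$ all the graded pieces vanish, and the long exact sequences along the iterated extensions give the claim.

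The main obstacle is the case $\mathrm{injdim}_\mathcal{T}X_\lambda=-\infty$, i.e. $X_\lambda=0$, which was already discarded, and the unboundedness of the $d_\lambda$. The latter is controlled by $n-a$, which is exactly where the hypothesis $\mathrm{inf}X>-\infty$ enters. The essential input is Lemma \ref{lem4.5}; without noetherianity, coproducts of objects of $\mathcal{I}$ need not be in $\mathcal{I}$, and the vanishing $\mathrm{Hom}(\mathcal{H}_M[i],\coprod I_\lambda)=0$ could fail.
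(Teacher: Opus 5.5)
Your proof is correct. It rests on the same two inputs as the paper's proof, namely Lemma~\ref{lem4.5} and the fact that a coproduct of exact triangles is exact, but it organizes them differently.

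The paper normalizes to $\mathrm{inf}X=0$ and inducts on $n$:
\begin{itemize}
\item it takes one ifij step $X_\lambda\to I_\lambda\to L_\lambda\to X_\lambda[1]$ for every $\lambda$ at once, with $I_\lambda=0$ when $\mathrm{inf}X_\lambda>0$;
\item it checks $\mathrm{injdim}_\mathcal{T}L_\lambda\leq n-1$ and applies the induction hypothesis to $\coprod L_\lambda$ after re-shifting;
\item it finishes with the coproduct triangle and the dual of Remark~\ref{lem:2.1}(4).
\end{itemize}

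You instead do the following:
\begin{itemize}
\item you invoke Proposition~\ref{lem3.4}$(1)\Rightarrow(4)$ once, which gives the complete finite filtration of each $X_\lambda$;
\item you pad all of these into the single class $\mathcal{I}[-n]\ast\cdots\ast\mathcal{I}[-a]$;
\item you prove this class is closed under coproducts, by induction on the number of factors using Lemma~\ref{lem4.5} layer by layer;
\item you bound the injective dimension of every object in the class by one Hom computation via Proposition~\ref{lem3.3}(3).
\end{itemize}

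Your version handles the varying infima more transparently, since padding replaces the case $I_\lambda=0$ and the shifts $L_\lambda[t]$. It also replaces the dimension-shifting step by a direct vanishing check. The paper's version is more self-contained. Proposition~\ref{lem3.4} is only stated there as the dual of Proposition~\ref{lem2.5}, and its proof is exactly the one-step induction that the paper carries out here.

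One small slip: in your display, the middle term $\mathrm{Hom}_\mathcal{T}(\mathcal{H}^{k-i}_M(H),I)$ does not match the right-hand one. The correct reduction is the one you give in words: $\mathrm{Hom}_\mathcal{T}(H[i],I[-k])\cong\mathrm{Hom}_\mathcal{T}(H[i+k],I)\cong\mathrm{Hom}_\mathcal{T}(\mathcal{H}^0_M(H[i+k]),I)$. This is zero unless $i+k=0$, so your conclusion stands.
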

\begin{proof} By shifting if necessary, we may assume that
$\mathrm{inf}X=\mathrm{inf}\{\mathrm{inf}X_\lambda\hspace{0.03cm}|\hspace{0.03cm}\lambda\in\Lambda\} = 0$. We prove the result by induction on $n\geq0$. If $n=0$, then
$\mathrm{inf}X_\lambda=\mathrm{injdim}_\mathcal{T}X_\lambda=0$ for all $\lambda\in\Lambda$, so that $X_\lambda\in\mathcal{I}$, and then $X\in\mathcal{I}$  by Lemma \ref{lem4.5}. Suppose now that $n > 0$. For each $\lambda\in\Lambda$, choose an element $I_\lambda\in \mathcal{I}$ and a map $f_\lambda:X_\lambda\rightarrow I_\lambda$
in $\mathcal{T}$ such that $\mathcal{H}^0_M(f_\lambda)$ is injective. If $\mathrm{inf}X_\lambda>0$, then
 take $I_\lambda=0$. The map $f_\lambda$ induces an exact triangle
$X_\lambda\rightarrow I_\lambda\rightarrow L_\lambda\rightarrow X_\lambda[1]$
in $\mathcal{T}$, such that $\mathrm{inf}L_\lambda\geq0$. Let $H\in\mathcal{H}_M$ and $i\in\mathbb{Z}$.
Applying the functor $\mathrm{Hom}_\mathcal{T}(H,-)$ to the above triangle gives the following exact
sequence:
$$\mathrm{Hom}_\mathcal{T}(H[i],I_\lambda)\rightarrow\mathrm{Hom}_\mathcal{T}(H[i],L_\lambda)
\rightarrow\mathrm{Hom}_\mathcal{T}(H[i-1],X_\lambda)\rightarrow
\mathrm{Hom}_\mathcal{T}(H[i-1],I_\lambda).$$
If $i< 0$, then $\mathrm{Hom}_\mathcal{T}(H[i],I_\lambda)=0=\mathrm{Hom}_\mathcal{T}(H[i-1],I_\lambda)$ by Proposition \ref{lem3.3}, it yields that
$$\mathrm{Hom}_\mathcal{T}(H[i],L_\lambda)
\cong\mathrm{Hom}_\mathcal{T}(H[i-1],X_\lambda),$$
so $\mathrm{injdim}_\mathcal{T}L_\lambda\leq\mathrm{injdim}_\mathcal{T}X_\lambda-1$. If $\mathrm{inf}\{\mathrm{inf}L_\lambda\hspace{0.03cm}|\hspace{0.03cm}\lambda\in\Lambda\} = 0$, then  $\mathrm{injdim}_\mathcal{T}\coprod_{\lambda\in\Lambda}L_\lambda\leq n-1$ by the induction. If $\mathrm{inf}\{\mathrm{inf}L_\lambda\hspace{0.03cm}|\hspace{0.03cm}\lambda\in\Lambda\}=t> 0$, then $\mathrm{inf}\{\mathrm{inf}(L_\lambda[t])\hspace{0.03cm}|\hspace{0.03cm}\lambda\in\Lambda\}=0$.
Since $\mathrm{injdim}_\mathcal{T}L_\lambda[t]=\mathrm{injdim}_\mathcal{T}L_\lambda-t\leq n-1-t$, it follows from
 the induction that $\mathrm{injdim}_\mathcal{T}\coprod_{\lambda\in\Lambda}L_\lambda=
 \mathrm{injdim}_\mathcal{T}\coprod_{\lambda\in\Lambda}L_\lambda[t]+t\leq n-1$. Hence the exact triangle
$$\coprod_{\lambda\in\Lambda}X_\lambda\stackrel{f}\rightarrow \coprod_{\lambda\in\Lambda}I_\lambda\rightarrow \coprod_{\lambda\in\Lambda}L_\lambda\rightarrow \coprod_{\lambda\in\Lambda}X_\lambda[1]$$implies that
$\mathrm{injdim}_\mathcal{T}X=\mathrm{injdim}_\mathcal{T}\coprod_{\lambda\in\Lambda}L_\lambda+1\leq n$ by the dual of Remark \ref{lem:2.1}(4) as $f$ is an ifij morphism.
\end{proof}

Bass \cite[Proposition 4.3]{HB} showed that if $R$ is a left noetherian ring with $\mathrm{injdim}_RR< \infty$ then
$\mathrm{FPD}(R)\leq\mathrm{injdim}_RR< \infty$.
The next result generalizes this statement to noetherian triangulated categories, which is a generalization of \cite[Theorem 3.5]{LS}.

\begin{cor}\label{lem3.2'} If $\mathcal{T}$ is $R$-linear and noetherian, then $\mathrm{FPD}(\mathcal{T},M)\leq\mathrm{injdim}_\mathcal{T}M=\mathrm{silp}\mathcal{T}$ for any set $\Lambda$. The equality holds when $\mathrm{injdim}_\mathcal{T}M<\infty$ and $\mathrm{silp}\mathcal{T}:=\mathrm{sup}\{\mathrm{projdim}_\mathcal{T}I
\hspace{0.03cm}|\hspace{0.03cm}I\in\mathcal{I}\}$.
\end{cor}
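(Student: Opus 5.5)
The plan is to get the inequality from Theorem \ref{lem3.2}(1) together with an equality $\mathrm{injdim}_\mathcal{T}M=\mathrm{sup}\{\mathrm{injdim}_\mathcal{T}P\mid P\in\mathcal{P}\}$. Theorem \ref{lem3.2}(1) already gives $\mathrm{FPD}(\mathcal{T},M)\leq\mathrm{sup}\{\mathrm{injdim}_\mathcal{T}P\mid P\in\mathcal{P}\}$, so it remains to show that every $P\in\mathcal{P}=\mathrm{Add}(M)$ satisfies $\mathrm{injdim}_\mathcal{T}P\leq\mathrm{injdim}_\mathcal{T}M$. The reverse inequality is trivial because $M\in\mathcal{P}$. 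This is where the noetherian hypothesis enters, through Lemma \ref{lem3.6}. Write $P$ as a summand of a coproduct $M^{(\Lambda)}$ for some set $\Lambda$ (this is the role of the phrase ``for any set $\Lambda$''). We may assume $n=\mathrm{injdim}_\mathcal{T}M<\infty$, since otherwise there is nothing to prove. Since $M$ is compact, $\mathcal{H}^\ell_M$ commutes with coproducts, so $\mathrm{inf}M^{(\Lambda)}=\mathrm{inf}M$.

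The main obstacle is the hypothesis $\mathrm{inf}>-\infty$ in Lemma \ref{lem3.6}, so I first show $\mathrm{inf}M>-\infty$. By Remark \ref{lem:2.2}(3), $\mathrm{injdim}_\mathcal{T}Y\geq\mathrm{inf}Y$, which bounds $\mathrm{inf}M$ only from above. For a lower bound I would argue directly. $\mathrm{injdim}_\mathcal{T}M=n$ gives $\mathrm{Hom}_\mathcal{T}(\mathcal{H}_M[i],M)=0$ for $i<-n$. Apply this to $H=\mathcal{H}^{\ell}_M(M)$ and use the triangles $\tau^{\leq\ell-1}M\to\tau^{\leq\ell}M\to\mathcal{H}^\ell_M(M)[-\ell]\to$. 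The nonzero component $\tau^{\leq\ell}M\to M$ together with $M\in\mathcal{T}^{-}$ should force $\mathcal{H}^\ell_M(M)=0$ for $\ell$ sufficiently negative. Alternatively, if $\mathrm{inf}M=-\infty$, one can first reduce to the statement $\mathrm{injdim}$ of a coproduct $\leq n$ via Proposition \ref{lem3.3}(3). Then test against $H[i]$ and, where necessary, truncate $H$ into compact approximations. I expect this step to need the most care. If it cannot be settled in general, I would treat $\mathrm{inf}M=-\infty$ separately or note that $\mathrm{inf}M>-\infty$ is implicitly assumed.

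With $\mathrm{inf}M^{(\Lambda)}>-\infty$, Lemma \ref{lem3.6} gives $\mathrm{injdim}_\mathcal{T}M^{(\Lambda)}\leq n$. The characterization in Proposition \ref{lem3.3}(3) is clearly inherited by direct summands, so $\mathrm{injdim}_\mathcal{T}P\leq n$. Hence $\mathrm{FPD}(\mathcal{T},M)\leq\mathrm{sup}_{P}\mathrm{injdim}_\mathcal{T}P=\mathrm{injdim}_\mathcal{T}M$.

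For the equality statement, assume $\mathrm{injdim}_\mathcal{T}M=n<\infty$. By Proposition \ref{lem3.4}, $M$ has a finite ifij coresolution whose terms lie in $\mathcal{I}[-\mathrm{inf}M-d]\ast\cdots\ast\mathcal{I}[-\mathrm{inf}M]$. Each $I\in\mathcal{I}$ has $\mathrm{projdim}_\mathcal{T}I$ bounded by the dual quantity $\mathrm{sup}\{\mathrm{projdim}_\mathcal{T}I\mid I\in\mathcal{I}\}$. The Brown–Comenetz duality $\mathrm{Hom}_\mathcal{T}(X,M^\ast[i])\cong\mathrm{Hom}_\mathbb{Z}(\mathrm{Hom}_\mathcal{T}(M,X[-i]),\mathbb{Q}/\mathbb{Z})$ converts vanishing of $\mathrm{Hom}_\mathcal{T}(\mathcal{H}_M[i],M)$ into vanishing of $\mathrm{Hom}_\mathcal{T}(M^\ast,\mathcal{H}_M[j])$. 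This gives $\mathrm{projdim}_\mathcal{T}M^\ast\leq\mathrm{injdim}_\mathcal{T}M$, and likewise for products of copies of $M^\ast$ and their summands. The reverse inequality comes from symmetrically pairing the nonvanishing $\mathrm{Hom}_\mathcal{T}(\mathcal{H}_M[-n],M)\neq0$ with a copy of $M^\ast$. Finally, Theorem \ref{lem3.2}(1) identifies the common value with $\mathrm{silp}\mathcal{T}$.
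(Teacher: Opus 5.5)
Your argument for the inequality is the paper's: Theorem~\ref{lem3.2}(1) gives $\mathrm{FPD}(\mathcal{T},M)\leq\mathrm{sup}\{\mathrm{injdim}_\mathcal{T}P\mid P\in\mathcal{P}\}$. Lemma~\ref{lem3.6} applied to $M^{(\Lambda)}$, plus passage to retracts, identifies the right side with $\mathrm{injdim}_\mathcal{T}M$. You correctly see that Lemma~\ref{lem3.6} needs $\mathrm{inf}M>-\infty$, which the paper uses silently. However, the route you sketch cannot work, because finite injective dimension does not force $M$ to be bounded below. Take $A=k[x]$ with $x$ in cohomological degree $-2$ and zero differential, $\mathcal{T}=\mathrm{D}(A)$ and $M=A$. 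The heart is $\mathrm{Mod}\,k$, and the triangle $A[2]\stackrel{x}{\rightarrow}A\rightarrow k\rightarrow A[3]$ gives $\mathrm{Hom}_\mathcal{T}(k,A[m])\neq0$ only for $m=3$. So $\mathrm{injdim}_\mathcal{T}A=3$ by Proposition~\ref{lem3.3}, yet $\mathrm{inf}A=-\infty$. The boundedness must come from the noetherian hypothesis instead. With $R$ acting on each $\mathrm{Hom}_\mathcal{T}(M,M[n])$, as in the proof of Lemma~\ref{lem4.5}, finite generation of $\bigoplus_n\mathrm{Hom}_\mathcal{T}(M,M[n])$ forces all but finitely many summands to vanish. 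Since $\mathrm{Hom}_\mathcal{T}(M,M[n])\cong\mathrm{Hom}_\mathcal{T}(M,\mathcal{H}^n_M(M))$ by Lemma~\ref{lem0.2}, and $\mathrm{Hom}_\mathcal{T}(M,-)$ kills no nonzero object of $\mathcal{H}_M$, this gives $M\in\mathcal{T}^\mathrm{b}$. The example above is precisely not noetherian in this sense.

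The equality part has a genuine gap. Brown--Comenetz duality $\mathrm{Hom}_\mathcal{T}(X,M^\ast[i])\cong\mathrm{Hom}_\mathbb{Z}(\mathrm{Hom}_\mathcal{T}(M,X[-i]),\mathbb{Q}/\mathbb{Z})$ controls maps \emph{into} $M^\ast$ only. It says nothing about $\mathrm{Hom}_\mathcal{T}(M^\ast,\mathcal{H}_M[j])$, so it cannot turn $\mathrm{Hom}_\mathcal{T}(\mathcal{H}_M[i],M)=0$ into a bound on $\mathrm{projdim}_\mathcal{T}M^\ast$. Such a bound would not pass to products anyway, since $\mathrm{Hom}_\mathcal{T}(\prod M^\ast,-)$ is not computed factorwise. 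More seriously, nothing in your sketch gives $\mathrm{FPD}(\mathcal{T},M)\geq\mathrm{injdim}_\mathcal{T}M$; Theorem~\ref{lem3.2}(1) only bounds $\mathrm{FPD}$ from above.

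The paper uses no duality. It tests $\mathrm{injdim}_\mathcal{T}M^{(\Lambda)}=n$ (again Lemma~\ref{lem3.6}) against the objects $I\in\mathcal{I}$, which are bounded with $\mathrm{inf}I=0$, via Proposition~\ref{lem3.3}. This gives $\mathrm{Hom}_\mathcal{T}(I,P[j])=0$ for all $P\in\mathcal{P}$ and $j>n$, and the paper concludes $\mathrm{projdim}_\mathcal{T}I\leq n$ from Proposition~\ref{lem2.3}. The last clause of Proposition~\ref{lem3.3} then yields $J\in\mathcal{I}$ with $\mathrm{Hom}_\mathcal{T}(J,M[n])\neq0$. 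So $\mathrm{projdim}_\mathcal{T}J=n$, and $J\in\mathcal{T}^\mathrm{b}\cap\mathcal{T}^{\geq0}$ witnesses $n\leq\mathrm{FPD}(\mathcal{T},M)$.

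Be aware that the step $\mathrm{projdim}_\mathcal{T}I\leq n$ uses the final clause of Proposition~\ref{lem2.3}, which presupposes $\mathrm{projdim}_\mathcal{T}I<\infty$. Such vanishing alone does not bound projective dimension: non-projective modules over a self-injective algebra are counterexamples. Moreover, for $\mathcal{T}=\mathrm{D}(\Lambda)$ with $\Lambda$ an Artin algebra, $\mathcal{I}$ contains the injective module $D(\Lambda_\Lambda)$, whose projective dimension is $\mathrm{injdim}\,\Lambda_\Lambda$. That step would therefore give $\mathrm{injdim}\,\Lambda_\Lambda\leq\mathrm{injdim}\,{}_\Lambda\Lambda$, settling the open Gorenstein symmetry conjecture. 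So an extra finiteness input is needed here on either route.
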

\begin{proof}
As $M$ is a retract of $M^{(\Lambda)}$, $\mathrm{injdim}_\mathcal{T}M\leq\mathrm{injdim}_\mathcal{T}M^{(\Lambda)}$, it follows from Lemma \ref{lem3.6} that $\mathrm{injdim}_\mathcal{T}M^{(\Lambda)}=\mathrm{injdim}_\mathcal{T}M$, so the inequality  follows from Theorem \ref{lem3.2}. Conversely, let $n=\mathrm{injdim}_\mathcal{T}M<\infty$. Then for all $I\in\mathcal{I}$ and $i<-n$,  $\mathrm{Hom}_\mathcal{T}(I[i],M^{(\Lambda)})=0$ by Proposition \ref{lem3.3}, it follows from Proposition \ref{lem2.3} that $\mathrm{projdim}_\mathcal{T}I\leq n$. Also $\mathrm{Hom}_\mathcal{T}(J[-n],M^{(\Lambda)})\neq0$ for some $J\in\mathcal{I}$, so $n=\mathrm{projdim}_\mathcal{T}J\leq\mathrm{projdim}_\mathcal{T}J+\mathrm{inf}J
\leq\mathrm{FPD}(\mathcal{T},M)$, as desired.
\end{proof}

\begin{rem}\label{lem:0.8}{\rm (1) The first inequality in Theorem \ref{lem3.2} is obtained by Chen-Chen-Zhang in
\cite[Proposition 4.8]{CCZ} whenever $M\in\mathcal{T}^\mathrm{b}$.

(2) For a ring $R$, $\mathrm{FPD}(\mathrm{D}(R),R)=\mathrm{FPD}(R)$ the big finitistic dimension of $R$.

(3) Let $A$ be a non-positive dg ring  with bounded cohomology. Then
$\mathrm{FPD}(\mathrm{D}(A),A)=\mathrm{FPD}(A)$ the finitistic projective dimension of $A$ defined in \cite[Definition 1.1]{LS}.

(4) Let $R$ be a commutative noetherian ring and $D$ a dualizing complex over $R$ with
$\mathrm{sup}D<0$, and let
$A=R\ltimes D$ be the trivial extension. Then $\mathrm{FPD}(\mathrm{D}(A),A)\leq\mathrm{injdim}_{A}A<\infty$ by Corollary \ref{lem3.2'} and Remark \ref{lem:2.2}(6), see \cite[Theorem 4.5]{LS}.}
\end{rem}

Following \cite{CCZ,N0},
the full subcategory of \emph{pseudo-compact} objects of $\mathcal{T}$ is $$\mathcal{T}^-_c:
=\bigcap_{m\geq1}(\mathcal{T}^c\ast\mathcal{T}^{\leq-m}),$$ that is, $X\in\mathcal{T}^-_c$ if there is an exact triangle $E\rightarrow X\rightarrow D\rightharpoonup E[1]$ in $\mathcal{T}$ with $E\in\mathcal{T}^c$ and $D\in\mathcal{T}^{\leq-m}$ for any $m\geq1$, and denote $\mathcal{T}^\mathrm{b}_c:=\mathcal{T}^-_c\cap\mathcal{T}^\mathrm{b}$. We next turn to a discussion of the relationships among
$\mathrm{fpd}(\mathcal{T},M)$, $\mathrm{findim}(\mathcal{T}^c,M)$ and $\mathrm{fpd}(\mathcal{H}_M)$ introduced earlier.

\begin{thm}\label{lem4.2} One has the following $($in$)$equalities
 \begin{center}$\begin{aligned}\mathrm{fpd}(\mathcal{T},M)&
 =\mathrm{sup}\{\mathrm{projdim}_\mathcal{T}X+\mathrm{inf}X
\hspace{0.03cm}|\hspace{0.03cm}
X\in\mathcal{T}^c\cap\mathcal{T}^\mathrm{b}\}\\
&=\mathrm{sup}\{\mathrm{projdim}_\mathcal{T}X+\mathrm{inf}X
\hspace{0.03cm}|\hspace{0.03cm}
X\in\mathcal{T}^\mathrm{b}_c\ \textrm{with}\ \mathrm{projdim}_\mathcal{T}X<\infty\}\\
&=\mathrm{sup}\{\mathrm{projdim}_\mathcal{T}X
\hspace{0.03cm}|\hspace{0.03cm}
X\in\mathcal{T}^-_c\cap\mathcal{T}^{\geq0}\ \textrm{with}\ \mathrm{projdim}_\mathcal{T}X<\infty\}\\
&\leq\mathrm{findim}(\mathcal{T}^c,M)\\
&\leq
 \mathrm{max}\{0,\mathrm{fpd}(\mathcal{T},M)\}.\end{aligned}$\end{center}Moreover, if $M$ is tilting then $\mathrm{fpd}(\mathcal{T},M)=\mathrm{fpd}(\mathcal{H}_M)=\mathrm{findim}(\mathcal{T}^c,M)$.
\end{thm}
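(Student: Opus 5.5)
The plan is to follow the proof of Theorem \ref{lem3.2} closely, replacing $\mathcal{T}^\mathrm{b}$ by the compact, or pseudo-compact, bounded objects. The key input is that a compact object $X$ lies in $\mathcal{T}^-$ and has $\mathrm{projdim}_\mathcal{T}X<\infty$. Indeed, $X\in\langle M\rangle$, and a finite extension of shifts of $M$ has $\mathrm{Hom}_\mathcal{T}(X,\mathcal{H}_M[i])=0$ for $i\gg0$, because $\mathrm{Hom}_\mathcal{T}(M,\mathcal{H}_M[\neq0])=0$; we then apply Proposition \ref{lem2.3}. So the condition "with $\mathrm{projdim}_\mathcal{T}X<\infty$" is automatic on $\mathcal{T}^c$.

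\textbf{Equalities.} For $X\in\mathcal{T}^c\cap\mathcal{T}^\mathrm{b}$, the shift $X[\mathrm{inf}X]$ lies in $\mathcal{T}^c\cap\mathcal{T}^\mathrm{b}\cap\mathcal{T}^{\geq0}$. Together with $\mathrm{projdim}_\mathcal{T}X[n]=\mathrm{projdim}_\mathcal{T}X+n$ (Remark \ref{lem:2.1}(3)), this gives the first equality, exactly as for $\mathrm{FPD}$.

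For the second and third equalities, note that $\mathcal{T}^c\subseteq\mathcal{T}^-_c$, which gives one inequality. For the reverse, take $X\in\mathcal{T}^-_c\cap\mathcal{T}^{\geq0}$ with $\mathrm{projdim}_\mathcal{T}X=n<\infty$. Then $X\in\mathcal{T}^-$, since it is an extension of a compact object and an object of $\mathcal{T}^{\leq -m}$. Hence by Remark \ref{lem:2.2}(1), $X\in\mathcal{T}^{\mathrm{sb}}$, so $X\in\mathcal{T}^\mathrm{b}$.

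By Proposition \ref{lem2.5}, a sppj resolution of $X$ stops after $n+\mathrm{sup}X$ steps with all terms in $\mathcal{P}[-\mathrm{sup}X]$. The task is to show that the terms may be taken in $\mathrm{add}(M)$. I would use the triangle $E\to X\to D$ with $E$ compact and $D\in\mathcal{T}^{\leq -m}$, where $m\gg n$. Then $\mathrm{Hom}(X,\mathcal{H}_M[i])\cong\mathrm{Hom}(E,\mathcal{H}_M[i])$ in the relevant range, and truncating $E$ by a finite sppj-type argument inside $\mathcal{T}^c$ should produce a compact object $X'$ with $\mathrm{projdim}_\mathcal{T}X'+\mathrm{inf}X'\geq n$.

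Concretely, I would rely on the fact that $\mathcal{T}^c\cap\mathcal{T}^{\leq0}$ and the $\mathrm{add}(M)$ approximations behave well. A compact object has a finite resolution by $\mathrm{add}(M)$ shifts via sppj morphisms chosen in $\mathrm{add}(M)$, which is possible since $\mathrm{Hom}(M,\mathcal{H}^{s}_M(E))$ is finitely generated as a quotient. One then cuts off at the stage where $\mathrm{projdim}$ is detected. This reduction from pseudo-compact to compact is the \textbf{main obstacle}. If it is delicate, I would first try to show directly that $\mathrm{Hom}(X,\mathcal{H}_M[n])\neq0$ forces $\mathrm{Hom}(E,\mathcal{H}_M[n])\neq0$, and that the compact object $\sigma$ obtained from $E$ by "brutal truncation" at degree $n$ lies in $\mathcal{T}^\mathrm{b}$ with $\mathrm{inf}\geq$ a controlled bound. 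The intermediate line, over $\mathcal{T}^\mathrm{b}_c$, sits between the other two by the same shifting argument.

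\textbf{Inequalities with $\mathrm{findim}$.} I would copy Theorem \ref{lem3.2}(2). If $\mathrm{findim}(\mathcal{T}^c,M)=d$ and $X\in\mathcal{T}^c\cap\mathcal{T}^\mathrm{b}\cap\mathcal{T}^{\geq0}$, then $X\in M(-\infty,-1]^\perp\cap\mathcal{T}^c$, since $\mathcal{T}^{\geq0}=M^{\perp_{>0}}$ up to the indexing convention. Hence $X[-d]\in\langle M\rangle^{[0,\infty)}$, so $\mathrm{Hom}(X[-d],\mathcal{H}_M[t])=0$ for $t>0$, and $\mathrm{projdim}_\mathcal{T}X\leq d$.

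Conversely, set $n=\mathrm{fpd}(\mathcal{T},M)$ and take $X\in M(-\infty,-1]^\perp\cap\mathcal{T}^c$. Compactness gives $X\in\mathcal{T}^\mathrm{b}$ and finite $\mathrm{projdim}$. Proposition \ref{lem2.5}, now with $\mathrm{add}(M)$ terms, places $X$ in $\mathrm{add}(M)[-s]\ast\cdots\ast\mathrm{add}(M)[n]$, hence in $\langle M\rangle^{[0,\infty)}[\max\{0,n\}]$.

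\textbf{Tilting case.} I would repeat Theorem \ref{lem3.2}(3) verbatim. For compact $X$, the truncated syzygy $X_{-i}$ is compact, being built from $X$ and $\mathrm{add}(M)$ terms, and it lies in $\mathcal{H}_M$ with a finite $\mathrm{add}(M)$-resolution. This gives $\mathrm{fpd}(\mathcal{T},M)=\mathrm{fpd}(\mathcal{H}_M)$. The equality with $\mathrm{findim}$ then follows because tilting gives $\mathrm{inf}M=0$, so $\mathrm{fpd}\geq0$ and $\max\{0,\mathrm{fpd}\}=\mathrm{fpd}$.
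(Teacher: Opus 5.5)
\textbf{Gap: the reduction from pseudo-compact to compact objects is never proved.} The gap is at exactly the step you flag as the main obstacle. You need to pass from $X\in\mathcal{T}^-_c\cap\mathcal{T}^{\geq0}$ (or $X\in\mathcal{T}^\mathrm{b}_c$) with $\mathrm{projdim}_\mathcal{T}X=n<\infty$ to a compact object realizing the same value, and you never supply an argument for this.

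The route you actually sketch, replacing $X$ by a compact $X'$ obtained by truncating $E$, has no visible way to control $\mathrm{inf}X'$. The isomorphism $\mathrm{Hom}_\mathcal{T}(X,\mathcal{H}_M[i])\cong\mathrm{Hom}_\mathcal{T}(E,\mathcal{H}_M[i])$ in a range of $i$ says nothing about $\mathrm{inf}E$ or $\mathrm{projdim}_\mathcal{T}E$. There is also no intrinsic brutal truncation of a compact object. Cutting an $\mathrm{add}(M)$-filtration does not control $\mathrm{inf}$ when $M$ is merely silting, since $\mathrm{inf}M$ may be negative or even $-\infty$. As written, the second and third equalities are unproved, and so is the use of $\mathrm{add}(M)$-terms for $X\in\mathcal{T}^\mathrm{b}_c$ in your tilting part.

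\textbf{The missing idea: the triangle you wrote down splits.} Take $E\to X\to D\to E[1]$ with $E\in\mathcal{T}^c$ and $D\in\mathcal{T}^{\leq -m}$, where $m\geq\max\{1,n+1\}$.
\begin{itemize}
\item Then $D\in\mathcal{T}^-$ and $n+\mathrm{sup}D\leq -1<0$.
\item The definition of $\mathrm{projdim}_\mathcal{T}X=n$, applied with $U=D$ and $i=0$, gives $\mathrm{Hom}_\mathcal{T}(X,D)=0$.
\item So the map $X\to D$ vanishes and $E\cong X\oplus D[-1]$.
\item Hence $X$ is a direct summand of $E$, and so $X\in\mathcal{T}^c$.
\end{itemize}
The paper runs this for $X[\mathrm{sup}X]$ with $m=n+\mathrm{sup}X+1$. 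It yields $\{X\in\mathcal{T}^\mathrm{b}_c\mid\mathrm{projdim}_\mathcal{T}X<\infty\}=\mathcal{T}^c\cap\mathcal{T}^\mathrm{b}$. Combined with $\mathcal{T}^\mathrm{b}_c\cap\mathcal{T}^{\geq0}=\mathcal{T}^-_c\cap\mathcal{T}^{\geq0}$ and your shift argument, all three suprema then range over the same objects.

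The rest of your plan agrees with the paper's proof. That covers the two $\mathrm{findim}$ inequalities, and the tilting case by repeating Theorem \ref{lem3.2}(3), where $\mathrm{inf}M=0$ forces $\mathrm{fpd}(\mathcal{T},M)\geq0$.

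\textbf{Minor slips.}
\begin{itemize}
\item $\mathcal{T}^{\geq0}=M^{\perp_{<0}}=M(-\infty,-1]^{\perp}$, whereas $M^{\perp_{>0}}$ is $\mathcal{T}^{\leq0}$.
\item Objects of $\mathcal{T}^c$ or $\mathcal{T}^{\mathrm{sb}}$ need not lie in $\mathcal{T}^+$ when $\mathrm{inf}M=-\infty$. Boundedness of your $X$ therefore comes from $X\in\mathcal{T}^-\cap\mathcal{T}^{\geq0}$, not from compactness or from membership in $\mathcal{T}^{\mathrm{sb}}$.
\end{itemize}
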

\begin{proof}  Clearly,
 $\mathcal{T}^c\cap\mathcal{T}^\mathrm{b}\subseteq\{X\in\mathcal{T}^\mathrm{b}_c
\hspace{0.03cm}|\hspace{0.03cm}\mathrm{projdim}_\mathcal{T}X<\infty\}$ by Proposition \ref{lem2.5}. Let $X\in\mathcal{T}^\mathrm{b}_c$ with $\mathrm{projdim}_\mathcal{T}X=n<\infty$. Set $s=\mathrm{sup}X$. Then $n+s+1>0$, so there is an exact triangle $E\rightarrow X[s]\rightarrow D\rightarrow E[1]$ in $\mathcal{T}$ with $E\in\mathcal{T}^c$ and $D\in\mathcal{T}^{\leq-(n+s+1)}$. As $\mathrm{projdim}_\mathcal{T}X=n$ and $\mathrm{sup}D\leq-(n+s+1)$, $\mathrm{Hom}_\mathcal{T}(X[s],D)=0$, so $X$ is a retract of $E[-s]$. Thus $\mathcal{T}^c\cap\mathcal{T}^\mathrm{b}=\{X\in\mathcal{T}^\mathrm{b}_c
\hspace{0.03cm}|\hspace{0.03cm}\mathrm{projdim}_\mathcal{T}X<\infty\}$.
Note that $\mathcal{T}^\mathrm{b}_c\cap\mathcal{T}^{\geq0}
=\mathcal{T}^-_c\cap\mathcal{T}^{\geq0}$, the equalities hold.

Let $X\in\mathcal{T}^c\cap\mathcal{T}^\mathrm{b}$ and $d=\mathrm{findim}(\mathcal{T}^c,M)$. Then $X[\mathrm{inf}X]\in\langle M\rangle^{[0,\infty)}[d]=\overline{\langle M\rangle}^{[0,\infty)}[d]\cap\mathcal{T}^c$ by \cite[Lemma 2.3]{CCZ}, it follows from Theorem \ref{lem3.2} that $\mathrm{projdim}_\mathcal{T}X[\mathrm{inf}X]\leq d$, the first inequality holds. Set $n=\mathrm{max}\{0,\mathrm{fpd}(\mathcal{T},M)\}$ and let $X\in\mathcal{T}^c\cap\mathcal{T}^{\geq0}$. Then $X\in\mathcal{T}^c\cap\overline{\langle M\rangle}^{[0,\infty)}[n]=\langle M\rangle^{[0,\infty)}[n]$ by Theorem \ref{lem3.2} and \cite[Lemma 2.3]{CCZ}, it yields the second inequality.

Let $M$ be tilting
and $X\in\mathcal{T}^\mathrm{b}_c$ with $\mathrm{projdim}_\mathcal{T}X=n<\infty$. Then $X\in\mathcal{T}^c$,  and we can choose a sppj resolution of $X$, $$X_{s+1}\rightarrow P_s\rightarrow X_s\rightarrow X_{s+1}[1]$$ such that $P_s\in\mathrm{add}(M)[-\mathrm{sup}X]$, where $X_0=X$, $X_{n+1}=0$ and $X_{n-1}\not\in\mathrm{add}(M)[-\mathrm{sup}X]$ by Remark \ref{lem:0.1}(1). The remaining statements can now be obtained by analogy with the proof of Theorem \ref{lem3.2}.
\end{proof}

Krause \cite{K1} introduced a different notion of finitistic dimensions of triangulated categories by \cite[Remark 4]{K1}. We recall this definition in detail.

For objects $X, Y$ in $\mathcal{T}$ and
$n \geq 0$, we write $h(X,Y) \leq n$ when for any $i,j\in\mathbb{Z}$,
$$\operatorname{Hom}_\mathcal{T}(X,Y[i]) \neq 0 \neq \operatorname{Hom}_\mathcal{T}(X,Y[j])
\implies |j-i| < n.$$ An object $X$ is called \emph{homologically finite} if $h(X,Y)<\infty$ for each object $Y$ in $\mathcal{T}$.
We set
$\operatorname{hom}^n(X) := \{ Y \in \mathcal{T}\hspace{0.03cm}|\hspace{0.03cm}h(X,Y) \leq n \}$.
The \emph{amplitude} of $X\in\mathcal{T}$ is defined as
$\operatorname{amp}(X):=\mathrm{sup}\{|n|\geq0\hspace{0.03cm}|\hspace{0.03cm}\mathrm{Hom}_\mathcal{T}(X,X[-n])\neq 0\}$.  An object \( X \) in \( \mathcal{T}^c \) is called  a \emph{finitistic generator} of \( \mathcal{T}^c \) if \( X \) is homologically finite and
$\hom^p(X) \subseteq \langle X\rangle_{p}\ \text{for all}\ p \geq 0$.
The \emph{finitistic dimension} of \( \mathcal{T}^c \) is defined as
\[
\operatorname{fin.dim}\mathcal{T}^c:= \inf \{ \operatorname{amp}(X)\hspace{0.03cm}|\hspace{0.03cm}X \text{ is a finitistic generator of } \mathcal{T}^c \}.
\]

The following result show that $\mathrm{fpd}(\mathcal{T},M)$ is closely related to $\operatorname{fin.dim} \mathcal{T}^c$ and generalizes \cite[Theorem 3.1]{K1}.

\begin{thm}\label{lem4.13} One has $\operatorname{fin.dim} \mathcal{T}^c\leq\mathrm{fpd}(\mathcal{T},M)$. If $M$ is tilting, then
$$\mathrm{fpd}(\mathcal{T},M)<\infty\Longleftrightarrow
\operatorname{fin.dim} \mathcal{T}^c<\infty.$$
\end{thm}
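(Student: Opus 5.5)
The plan has two parts: the inequality $\operatorname{fin.dim}\mathcal{T}^c\leq\mathrm{fpd}(\mathcal{T},M)$ in general, and the reverse finiteness when $M$ is tilting.

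\emph{The inequality.} There is nothing to prove if $\mathrm{fpd}(\mathcal{T},M)=\infty$, so put $d=\mathrm{fpd}(\mathcal{T},M)<\infty$. By Theorem \ref{lem4.2}, $\mathrm{findim}(\mathcal{T}^c,M)\leq\max\{0,d\}$. This gives the key inclusion
$$M(-\infty,-1]^{\bot}\cap\mathcal{T}^c\subseteq\langle M\rangle^{[0,\infty)}[\max\{0,d\}].$$
Following Krause's argument for rings, I will build a candidate finitistic generator $G\in\mathcal{T}^c$ out of $M$, for instance $G=\bigoplus_{k=0}^{d}M[k]$ or a suitable truncation of it. The aim is to show three things: $G$ is homologically finite, $\hom^p(G)\subseteq\langle G\rangle_p$ for every $p\geq0$, and $\mathrm{amp}(G)\leq d$.

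For the generation property, take $Y\in\hom^p(G)$. The condition $h(G,Y)\leq p$ confines the nonvanishing of $\mathrm{Hom}_\mathcal{T}(M,Y[i])$ to a window of length less than $p$. After a shift, $Y$ therefore lies in $M(-\infty,-1]^\bot\cap\mathcal{T}^c$, and $\mathrm{sup}Y-\mathrm{inf}Y<p$. The inclusion above, combined with Proposition \ref{lem2.5} applied to an sppj resolution with $\mathrm{add}(M)$ terms, then produces a filtration of $Y$. I will count its layers carefully: each layer contributes a summand of $\mathrm{add}(G)$ up to shift, so the total number of layers should come out to at most $p$.

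The amplitude bound should come from Lemma \ref{lem0.2} together with the silting vanishing $\mathrm{Hom}_\mathcal{T}(M,M[>0])=0$.

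\emph{Main obstacle.} I expect the hardest step to be controlling $\mathrm{amp}(G)$ and the homological finiteness of $G$ when $M$ is only silting. The groups $\mathrm{Hom}_\mathcal{T}(M,M[-n])$ may be nonzero for arbitrarily large $n$, and $\mathrm{inf}M$ may equal $-\infty$. My first attempt is to measure the relevant Hom groups through $\mathcal{H}^\ell_M$ using Lemma \ref{lem0.2}, and to absorb the defect into the term $-\mathrm{inf}M$ from Theorem A. If that fails, I would replace $M$ by a compact object of the form $\tau$-approximated $M$ with bounded cohomology, using the pseudo-compact description in Theorem \ref{lem4.2}.

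\emph{The tilting equivalence.} Assume $M$ is tilting. The direction $(\Rightarrow)$ follows from the inequality. For $(\Leftarrow)$, let $X$ be a finitistic generator with $\mathrm{amp}(X)=a$. Since $X\in\mathcal{T}^c$, Proposition \ref{lem2.5} shows that $\mathrm{projdim}_\mathcal{T}X$, $\mathrm{sup}X$ and $\mathrm{inf}X$ are all finite. By Theorem \ref{lem4.2}, $\mathrm{fpd}(\mathcal{T},M)=\mathrm{fpd}(\mathcal{H}_M)$, so it suffices to bound the projective dimension of each $H\in\mathcal{H}_M\cap\mathcal{T}^c$.

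For such $H$, Proposition \ref{lem2.3} shows that $\mathrm{Hom}_\mathcal{T}(X,H[i])\neq0$ only for $-\mathrm{sup}X\leq i\leq\mathrm{projdim}_\mathcal{T}X$. Hence $H\in\hom^c(X)$ for a constant $c$ that does not depend on $H$, and therefore $H\in\langle X\rangle_c$.

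Finally, I will show by induction on $c$, using a ghost-lemma style argument, that any object of $\langle X\rangle_c$ lying in $\mathcal{H}_M$ has $\mathrm{projdim}_\mathcal{T}$ bounded in terms of $c$, $a$ and the cohomological range of $X$. The amplitude bound is what forces the shifts of $X$ that can occur in such a filtration to lie in a bounded range. This gives $\mathrm{fpd}(\mathcal{T},M)<\infty$.
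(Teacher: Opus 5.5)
Your plan for the inequality follows the paper's. The paper takes $G=M\oplus M[-d]$; your $\bigoplus_{k=0}^{d}M[k]$ works equally well. It then proves $\hom^p(G)\subseteq\langle M\rangle_p=\langle G\rangle_p$ by counting $\mathrm{add}(M)$-layers with Proposition~\ref{lem2.5}, and asserts $\mathrm{amp}(G)=d$.

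For the layer count, use $\hom^p(G)=\hom^{p-d}(M)$, so that $\sup Y-\inf Y\le p-d-1$. The window of length $<p$ that you state only yields $p+d$ layers.

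The real problem is the step you flagged, and it cannot be closed. Silting vanishing is one-sided. By Lemma~\ref{lem0.2}, $\mathrm{Hom}_\mathcal{T}(M,M[-n])\cong\mathrm{Hom}_\mathcal{T}(M,\mathcal{H}^{-n}_M(M))\neq0$ whenever $\mathcal{H}^{-n}_M(M)\neq0$. Hence $\mathrm{amp}(G)=d-\inf M$, and if $\inf M=-\infty$ then $h(M,M)$ is unbounded. Finiteness of $h(X,Y)$ for a fixed $Y$ passes from $X$ to every object of $\langle X\rangle$, and a finitistic generator generates $\mathcal{T}^c$. So in that case $\mathcal{T}^c$ has no finitistic generator at all, and neither of your fallbacks can work.

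Concretely, let $A=k[x]$ with $|x|=-2$ and zero differential, $\mathcal{T}=\mathrm{D}(A)$ and $M=A$.
The heart is $\mathrm{Mod}\,k$.
The triangle $A[2]\xrightarrow{x}A\to k\to A[3]$ gives $\mathrm{Hom}_\mathcal{T}(k,k[n])\neq0$ if and only if $n\in\{0,3\}$.
Hence $\mathrm{gpd}(\mathcal{T},M)=3$, and $\mathrm{fpd}(\mathcal{T},M)=3$ by Theorem~\ref{lem3.2}(1), with the value attained at $k$.
On the other hand, $h(A,A)=\infty$ forces $\operatorname{fin.dim}\mathcal{T}^c=\infty$.

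So the first assertion is false for non-tilting silting $M$, and the paper's proof has the same hole. Its claim $\mathrm{amp}(M\oplus M[-d])=d$ silently uses $\mathrm{Hom}_\mathcal{T}(M,M[<0])=0$, and homological finiteness of $M\oplus M[-d]$ is never checked.

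What your argument does prove is the inequality when $M$ is tilting. There $\mathrm{amp}(G)=d$, and $G$ is homologically finite against compact $Y$ because such $Y$ have bounded cohomology. When $M\in\mathcal{T}^{\mathrm{b}}$ it proves the weaker bound $\operatorname{fin.dim}\mathcal{T}^c\le\mathrm{fpd}(\mathcal{T},M)-\inf M$. The tilting equivalence is unaffected, since its $(\Rightarrow)$ direction only needs the tilting case.

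For the tilting converse your route is the paper's: $\mathcal{H}_M\cap\mathcal{T}^c\subseteq\hom^c(X)\subseteq\langle X\rangle_c$, followed by a level bound. However, the amplitude of $X$ plays no role, and you do not need the shifts in a filtration to be bounded, because the ghost lemma is insensitive to shifts. What you need is $X\in\langle M\rangle_q$ for some $q$; Proposition~\ref{lem2.5} gives $q=c$. Then $\langle X\rangle_c\subseteq\langle M\rangle_{cq}$.

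For $H\in\mathcal{H}_M$, tilting makes an sppj resolution a projective resolution inside $\mathcal{H}_M$. So any map $H\to N[k]$ with $N\in\mathcal{H}_M$ and $k\geq1$ factors through $k$ connecting maps, each killed by $\mathrm{Hom}_\mathcal{T}(M[i],-)$ for all $i$. This gives $\mathrm{projdim}_\mathcal{T}H\le cq-1$. The paper reaches the same conclusion via $\langle X\rangle_1\subseteq\langle M\rangle_q$ and the level bound it cites as [AT, Corollary 3.6].
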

\begin{proof} We may assume that $d =\mathrm{fpd}(\mathcal{T},M)< \infty$. We show by induction on $p \ge 0$ that
$\operatorname{hom}^p(M) \subseteq \langle M\rangle_{p+d}$.
The case $p = 0$ is clear by \cite[Lemma 2.1(1)]{K1}.
Assume $p > 0$. Let $X \in \operatorname{hom}^p(M)\cap\mathcal{T}^c$ and
$\operatorname{Hom}_\mathcal{T}(M,X[n])=0$ for
$n\not\in[0,p-1]$. Then $X\cong\tau^{\geq 0}(\tau^{\leq p-1}(X))$. If $p = 1$, then $X\cong\mathcal{H}^{0}_M(X)$ with $\mathrm{projdim}_{\mathcal{T}}X=\mathrm{projdim}_{\mathcal{T}}\mathcal{H}^{0}_M(X)\leq d$, and hence $X \in\langle M\rangle_{d+1}$ by Proposition \ref{lem2.5}. For $p > 1$, as $X\in\mathrm{add}(M)[-p+1]\ast\cdots\ast\mathrm{add}(M)[-1]\ast\mathrm{add}(M)$ by Proposition \ref{lem2.5} and \cite[Lemma 2.3]{CCZ}, one has an exact triangle
\[
P\stackrel{f}\rightarrow X\rightarrow X'\rightarrow P[1]
\]in $\mathcal{T}^c$, where $P\in\mathrm{add}(M)[-p+1]$ and $X'\in\mathrm{add}(M)[-p+2]\ast\cdots\ast\mathrm{add}(M)$, it follows from Lemma \ref{lem0.2} that $\operatorname{Hom}_\mathcal{T}(M,X'[n])=0$ for
$n\not\in[0, p-2]$. By the induction,
$X'\in \operatorname{hom}^{p-1}(M) \subseteq\langle M\rangle_{p-1+d}$
and then $X \in \langle M\rangle_{p+d}$. By \cite[Lemma 2.1(4)]{K1},
$\operatorname{hom}^{p+d}(M\oplus M[-d]) = \operatorname{hom}^p(M)$, so $\operatorname{hom}^{p+d}(M\oplus M[-d])\subseteq\langle M\rangle_{p+d}= \langle M\oplus M[-d]\rangle_{p+d}$
for $p \ge 0$. Also $\operatorname{hom}^d(M\oplus M[-d]) =\operatorname{hom}^0(M)= 0$ by \cite[Lemma 2.1(4)]{K1}. Thus $M\oplus M[-d]$ is a finitistic generator of $\mathcal{T}^c$ with $\operatorname{amp}(M\oplus M[-d]) = d$, and then $\operatorname{fin.dim} \mathcal{T}^c\leq d$.

Suppose that $M$ is tilting, and $X$ is a finitistic generator of $\mathcal{T}^c$. As $X\in\langle M\rangle$ and $M\in\langle X\rangle$, it follows from
 \cite[Lemma 2.2]{K1} that
$\operatorname{hom}^1(M) \subseteq \operatorname{hom}^p(X)$ and
$\langle X\rangle_{1} \subseteq \langle M\rangle_{q}$
for some $p, q \ge 0$.
Then
$\operatorname{hom}^1(M) \subseteq \operatorname{hom}^p(X) \subseteq \langle X\rangle_{p} \subseteq\langle M\rangle_{p \cdot q}$, it implies that $\mathrm{fpd}(\mathcal{T},M)\leq p \cdot q$ by Theorem \ref{lem4.2}, Proposition \ref{lem2.5} and \cite[Corollary 3.6]{AT}.
\end{proof}

An object $H$ in $\mathcal{H}_M$ is called \emph{finitely generated} if there is an epimorphism $\mathcal{H}^0_M(M)^n\rightarrow H$ in $\mathcal{H}_M$ for some $n\geq1$. Let $R$ be a commutative noetherian ring and $\mathcal{T}$ be $R$-linear. Following \cite{KPV},  the subcategory of \emph{bounded finite} objects of $\mathcal{T}$ is denoted by
\begin{center}$\mathcal{T}^\mathrm{b}_\mathrm{f}=\{X\in\mathcal{T}\hspace{0.03cm}|\hspace{0.03cm}
\bigoplus_{n\in\mathbb{Z}}\mathrm{Hom}_\mathcal{T}(M,X[n])\in\mathrm{mod}R\}$.\end{center}

\begin{rem}\label{lem:3.8}{\rm
(1) For a ring $R$, $\mathrm{fpd}(\mathrm{D}(R),R)=\mathrm{fpd}(R)$ the finitistic dimension of $R$ by Theorem \ref{lem4.2}, and
$\mathrm{fpd}(R)<\infty$ if and only if $\mathrm{fin.dim}\operatorname{Perf}(R)<\infty$, see \cite[Theorem 3.1]{K1}.

(2) Let $\mathcal{T}$ be $R$-linear and noetherian. By \cite[Remark 5.4]{KPV} and \cite[Lemma 5.7]{Y1},
 \begin{center}$\mathcal{T}^\mathrm{b}_\mathrm{f}
 =\{X\in\mathcal{T}^\mathrm{b}\hspace{0.03cm}|\hspace{0.03cm}\mathcal{H}^\ell_M(X)\ \textrm{is\ finitely\ generated\ in}\ \mathcal{H}_M\ \textrm{for\ all}\  \ell\in\mathbb{Z}\}=\mathcal{T}^\mathrm{b}_c$.\end{center}

(3) Let $A$ be a noetherian dg ring with $\mathrm{amp}A<\infty$. Then $\mathrm{D}(A)^\mathrm{b}_\mathrm{f}=\mathrm{D}^{\mathrm{b}}_\mathrm{f}(A)$  the full subcategory of $\mathrm{D}(A)$
consisting of dg modules with finite and bounded cohomology by (2). Thus $\mathrm{fpd}(\mathrm{D}(A),A)=\mathrm{fpd}(A)$ by \cite[Definition 7.1]{BSSW}.

(4) The inequalities in Theorem \ref{lem4.2} are proved by
\cite[Proposition B.3]{BCRPZ} when $M\in\mathcal{T}^\mathrm{b}$.}
\end{rem}

\bigskip
\section {\bf Dimension behaviour under recollements and applications}
This section systematically analyzes the behavior of the (big) finitistic  and global dimensions under recollements. We begin with the following definition.

\begin{df} {\rm (\cite{BBD}) Let \( \mathcal{R}, \mathcal{S} \) and \( \mathcal{T} \) be triangulated categories. A \emph{recollement} of \( \mathcal{S} \) by \( \mathcal{R} \) and \( \mathcal{T} \) is a diagram of six triangle functors
\begin{align}
\xymatrix@C=25pt@R=5pt{
\mathcal{ R}\ar[rr]|{i_\ast=i_!} &&\mathcal{S}\ar@<-1.5ex>[ll]_{i^\ast}\ar@<+1.5ex>[ll]^{i^!} \ar[rr]|{{j^!=j^\ast}} & &\mathcal{T}\ar@<-1.5ex>[ll]_{j_!} \ar@<+1.5ex>[ll]^{j_\ast} }
\tag{$\star$}\end{align}
satisfying the following conditions:
\begin{itemize}
    \item[(R1)] \((i^*, i_*), (i_!, i^!), (j_!, j^!), (j^*, j_*)\) are adjoint pairs;
    \item[(R2)] \(i_*, j_*, j_!\) are fully faithful;
    \item[(R3)] \(j^* i_* = 0\) (thus \(i^* j_! = 0\) and \(i^! j_* = 0\));
    \item[(R4)] for any object \(Y\) in \(\mathcal{S}\), there are two exact triangles in \(\mathcal{S}\):
    \[
    i_! i^!(Y) \longrightarrow Y \longrightarrow j_* j^*(Y) \longrightarrow i_! i^!(Y)[1],
    \]
    \[
    j_! j^!(Y) \longrightarrow Y \longrightarrow i_* i^*(Y) \longrightarrow j_! j^!(Y)[1].
    \]
\end{itemize}where the maps are given by adjunctions.
The diagram consisting of the upper two rows
\begin{center}$\xymatrix@C=25pt@R=5pt{
\mathcal{ R}\ar[rr]|{i_\ast=i_!} &&\mathcal{S}\ar@<-1.5ex>[ll]_{i^\ast} \ar[rr]|{{j^!=j^\ast}} & &\mathcal{T}\ar@<-1.5ex>[ll]_{j_!} }$\end{center}
is said to be a \emph{left recollement} of \(\mathcal{S}\) by \(\mathcal{R}\) and \(\mathcal{T}\) if the four functors \(i^*, i_*, j_!\) and \(j^*\) satisfy the conditions in (R1)--(R4) involving them. A \emph{right recollement}
 is defined similarly via the lower two rows.}
\end{df}

The following theorem establishes explicit mutual bounds for FPD when the categories in the recollement admit compact silting generators.

\begin{thm}\label{lem6.1} Given a recollement $(\star)$ of compactly generated triangulated categories \( \mathcal{R}, \mathcal{S} \) and \( \mathcal{T} \) with silting compact generators $M_\mathcal{R},M_\mathcal{S},M_\mathcal{T}$.\\
$(1)$ Suppose that $i_*(M_\mathcal{R})\in \mathcal{S}^{\mathrm{sb}}$. Then

\hspace{0.2cm} $(i)$ $\operatorname{FPD}(\mathcal{S},M_\mathcal{S})\leq\operatorname{FPD}(\mathcal{R},M_\mathcal{R})+
\operatorname{FPD}(\mathcal{T},M_\mathcal{T})-\mathrm{inf}M_\mathcal{S}
+\operatorname{projdim}_{\mathcal{S}}i_*(M_\mathcal{R})+\mathrm{sup}i_\ast(M_\mathcal{R})
+\mathrm{projdim}_\mathcal{S}j_!(M_\mathcal{T})+\mathrm{sup}j_!(M_\mathcal{T})
+1$.

\hspace{0.2cm} $(ii)$ $\operatorname{FPD}(\mathcal{R},M_\mathcal{R})\leq\operatorname{FPD}(\mathcal{S},M_\mathcal{S})
+\mathrm{projdim}_\mathcal{R}i^*(M_{\mathcal{S}})+\mathrm{sup}i^*(M_{\mathcal{S}})$.\\
$(2)$ If $j_!(\mathcal{T}^{\mathrm{sb}}\cap\mathcal{T}^{\geq0})\subseteq\mathcal{S}^{\geq-e}$ for some integer $e$, then $\operatorname{FPD}(\mathcal{T},M_\mathcal{T})\leq
\operatorname{FPD}(\mathcal{S},M_\mathcal{S})+\mathrm{sup}j_!(M_\mathcal{T})+e$.
\end{thm}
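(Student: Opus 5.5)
The proof rests on two ingredients. The first is the reformulation $\mathrm{FPD}(\mathcal{T},M)=\sup\{\mathrm{projdim}_\mathcal{T}X+\mathrm{inf}X\mid X\in\mathcal{T}^\mathrm{b},\ \mathrm{projdim}_\mathcal{T}X<\infty\}$, which lets us work with unnormalised bounded objects. The second is a transfer lemma for a coproduct-preserving triangle functor $F$ between such categories with $F(M)$ of finite projective dimension:
$$\mathrm{projdim}\,F(Z)\leq \mathrm{projdim}\,Z+\mathrm{projdim}\,F(M)+\mathrm{sup}\,F(M).$$
I would prove the transfer lemma as in Proposition \ref{lem0.4}. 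By Proposition \ref{lem2.5}, an object $Z$ of finite projective dimension lies in a finite extension $\mathcal{P}[-s]\ast\cdots\ast\mathcal{P}[-s+d]$. Apply $F$ to this extension, bound each $\mathrm{projdim}\,F(P[k])$ by $\mathrm{projdim}\,F(M)+k$, and use Proposition \ref{lem2.3}(3). Here $F(\mathrm{Add}M)\subseteq\mathrm{Add}F(M)$ because $F$ preserves coproducts. For $\mathrm{projdim}_\mathcal{S} i_*(M_\mathcal{R})$ to be finite, the hypothesis $i_*(M_\mathcal{R})\in\mathcal{S}^{\mathrm{sb}}$ is used via Remark \ref{lem:2.2}(1). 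For $j_!(M_\mathcal{T})$ and $i^*(M_\mathcal{S})$ finiteness is automatic: they are compact, since their right adjoints preserve coproducts.

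For (2), let $Z\in\mathcal{T}^{\mathrm{sb}}\cap\mathcal{T}^{\geq0}$. By the transfer lemma $j_!Z$ has finite projective dimension, and $j_!Z\in\mathcal{S}^{\mathrm{b}}$ by boundedness of $j_!$ on $\mathcal{T}^{\mathrm{sb}}$. Hence
$$\mathrm{projdim}_\mathcal{S}j_!Z\leq\mathrm{FPD}(\mathcal{S},M_\mathcal{S})-\mathrm{inf}\,j_!Z\leq\mathrm{FPD}(\mathcal{S},M_\mathcal{S})+e.$$
Next, since $j_!$ is fully faithful, $\mathrm{Hom}_\mathcal{T}(Z,P[t])\cong\mathrm{Hom}_\mathcal{S}(j_!Z,j_!P[t])$ for $P\in\mathrm{Add}(M_\mathcal{T})$, and $\mathrm{sup}\,j_!P\leq\mathrm{sup}\,j_!M_\mathcal{T}$. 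The last clause of Proposition \ref{lem2.3} then gives $\mathrm{projdim}_\mathcal{T}Z\leq\mathrm{projdim}_\mathcal{S}j_!Z+\mathrm{sup}\,j_!(M_\mathcal{T})$, which is (2).

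Part (1)(ii) is the same argument with $i_*$ in place of $j_!$. Take $Y\in\mathcal{R}^{\mathrm{b}}$ of finite projective dimension. Then $i_*Y$ has finite projective dimension by the transfer lemma. To see that $i_*Y$ is bounded, write $\mathrm{Hom}_\mathcal{S}(M_\mathcal{S}[t],i_*Y)\cong\mathrm{Hom}_\mathcal{R}(i^*M_\mathcal{S}[t],Y)$. Since $i^*M_\mathcal{S}$ is compact it has finite projdim by Proposition \ref{lem2.5}, so this Hom vanishes outside a finite range of $t$; this is where $\mathrm{projdim}\,i^*M_\mathcal{S}$ enters. Finally $Y\cong i^*i_*Y$, so applying the transfer lemma to $i^*$ bounds $\mathrm{projdim}_\mathcal{R}Y+\mathrm{inf}Y$ by $\mathrm{FPD}(\mathcal{S})$ plus the constant $\mathrm{projdim}\,i^*M_\mathcal{S}+\mathrm{sup}\,i^*M_\mathcal{S}$. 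The bookkeeping of $\inf$ shifts has to be checked so that exactly this constant appears.

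For (1)(i), take $X\in\mathcal{S}^\mathrm{b}$ of finite projective dimension and use the triangle
$$j_!j^*X\to X\to i_*i^*X\to j_!j^*X[1].$$
\begin{itemize}
\item First I show $j^*X\in\mathcal{T}^\mathrm{b}$ with finite projective dimension. Boundedness follows from $\mathrm{Hom}_\mathcal{T}(M_\mathcal{T}[t],j^*X)\cong\mathrm{Hom}_\mathcal{S}(j_!M_\mathcal{T}[t],X)$ and compactness of $j_!M_\mathcal{T}$. For the projective dimension, write $\mathrm{Hom}_\mathcal{T}(j^*X,H[t])\cong\mathrm{Hom}_\mathcal{S}(X,j_*H[t])$. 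Then $j_*H$ is bounded above, since $\mathrm{Hom}_\mathcal{S}(M_\mathcal{S}[t],j_*H)\cong\mathrm{Hom}_\mathcal{T}(j^*M_\mathcal{S}[t],H)$ with $j^*M_\mathcal{S}$ compact, so Definition \ref{lem:2.2} gives finiteness.
\item Then $j_!j^*X$ has finite projective dimension by the transfer lemma, hence so does $i_*i^*X$ by the triangle. Using $i_*M_\mathcal{R}\in\mathcal{S}^{\mathrm{sb}}$, the same adjunction tricks show that $i^*X\in\mathcal{R}^\mathrm{b}$ has finite projective dimension.
\item Apply the FPD bounds in $\mathcal{R}$ and $\mathcal{T}$ and transfer back via $i_*$ and $j_!$, which produces the $\mathrm{projdim}+\mathrm{sup}$ terms. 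Combine through the triangle, where $\mathrm{projdim}X$ is at most the maximum over the outer terms, with the extra $+1$ coming from the connecting shift.
\end{itemize}

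The main obstacle is controlling $\mathrm{inf}$ of $j^*X$ and $i^*X$ relative to $\mathrm{inf}X$. The FPD bounds are stated in terms of $\mathrm{projdim}+\mathrm{inf}$, so I need to convert the bound back into one for $\mathrm{projdim}\,X+\mathrm{inf}X$. I would try Remark \ref{lem:2.1}(2) for this, namely $\mathrm{inf}\geq\mathrm{inf}M-\mathrm{projdim}$ applied to $X$. This trades the $\mathrm{inf}$ terms for $-\mathrm{inf}M_\mathcal{S}$, and the resulting estimate bounds the sum rather than the maximum of the two outer FPD's, which is why both appear in (1)(i).
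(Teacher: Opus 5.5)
Your argument for (2) is the paper's. Part (1)(i), however, has a genuine gap exactly at the step you call the main obstacle, and the fix you propose cannot work. Remark \ref{lem:2.1}(2) applied to $X$ itself only gives $\mathrm{projdim}_\mathcal{S}X+\mathrm{inf}X\geq\mathrm{inf}M_\mathcal{S}$, which is a \emph{lower} bound on the very quantity you must bound from above.

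The easy half is $j^*X$: your adjunction already gives $\mathrm{inf}\,j^*X\geq\mathrm{inf}X-\mathrm{sup}\,j_!(M_\mathcal{T})$. The real problem is $i^*X$. Since $i^*$ is not left $t$-exact, you have no lower bound on $\mathrm{inf}\,i^*X$, and without one you cannot invoke $\mathrm{FPD}(\mathcal{R},M_\mathcal{R})$ at all. The missing idea is to run the estimates one after another through the triangle:
\begin{enumerate}
\item Bound $\mathrm{projdim}_\mathcal{T}j^*X\leq\mathrm{FPD}(\mathcal{T},M_\mathcal{T})-\mathrm{inf}X+\mathrm{sup}\,j_!(M_\mathcal{T})$, and hence bound $\mathrm{projdim}_\mathcal{S}j_!j^*X$ by adding $\mathrm{projdim}_\mathcal{S}j_!(M_\mathcal{T})$.
\item Reduce to $M_\mathcal{S}\in\mathcal{S}^{\mathrm{b}}$ (the case $\mathrm{inf}M_\mathcal{S}=-\infty$ is trivial). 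Then apply Remark \ref{lem:2.1}(2) to $j_!j^*X$, not to $X$, to bound $\mathrm{inf}\,j_!j^*X$ from below.
\item The triangle gives $\mathrm{inf}\,i_*i^*X\geq\min\{\mathrm{inf}X,\ \mathrm{inf}\,j_!j^*X-1\}$. This is the real source of the $+1$; combining projective dimensions over a triangle costs no shift.
\item Adjunction with $i_*(M_\mathcal{R})$ gives $\mathrm{inf}\,i^*X\geq\mathrm{inf}\,i_*i^*X-\mathrm{sup}\,i_*(M_\mathcal{R})$.
\item Only now apply $\mathrm{FPD}(\mathcal{R},M_\mathcal{R})$ to $i^*X$, and transfer along $i_*$.
\end{enumerate}
This is how $\mathrm{FPD}(\mathcal{T},M_\mathcal{T})$ enters the $\mathcal{R}$-term, which is why the bound is a sum. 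It is the paper's proof, written there for $X\in\mathcal{S}^{\geq0}$.

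Two further steps need repair.

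\emph{The transfer lemma is false as stated.} For $F=[1]$ and $Z=M$ it would give $\mathrm{projdim}\,M[1]\leq\mathrm{projdim}\,M+1+(-1)=0$, whereas $\mathrm{projdim}\,M[1]=1$. What your sketch actually proves is $\mathrm{projdim}\,F(Z)\leq\mathrm{projdim}\,Z+\mathrm{projdim}\,F(M)$. The $\mathrm{sup}$ constants in the theorem come from the $\mathrm{inf}$-control above, not from transfer. With this corrected lemma your (1)(ii) works and gives exactly the stated constant, using $\mathrm{inf}\,i_*Y\geq\mathrm{inf}Y-\mathrm{sup}\,i^*(M_\mathcal{S})$. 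That is a harmless variant of the paper's route: the paper uses full faithfulness of $i_*$ together with $\mathrm{sup}\,i_*(M_\mathcal{R})=\mathrm{projdim}_\mathcal{R}i^*(M_\mathcal{S})$, instead of $Y\cong i^*i_*Y$.

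\emph{In your first bullet, $j^*(M_\mathcal{S})$ need not be compact.} It is compact iff $j_*$ (equivalently $i^!$) preserves coproducts, i.e.\ iff $i_*(M_\mathcal{R})\in\mathcal{S}^c$. That is stronger than the hypothesis $i_*(M_\mathcal{R})\in\mathcal{S}^{\mathrm{sb}}$. You only need $\mathrm{projdim}_\mathcal{T}j^*(M_\mathcal{S})<\infty$, but it must be derived from the hypothesis. For example, use the triangle $j_!j^*(M_\mathcal{S})\to M_\mathcal{S}\to i_*i^*(M_\mathcal{S})$ with $i^*(M_\mathcal{S})$ compact, and then full faithfulness of $j_!$. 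The paper gets this from the induced recollement of $\mathrm{sb}$-categories in \cite[Theorem 3.7]{CCZ}.
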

\begin{proof} (1) As $i_*(M_\mathcal{R})\in \mathcal{S}^{\mathrm{sb}}$, it follows by \cite[Theorem 3.7]{CCZ} that $(\star)$ induces a left recollement
 \begin{center}$\xymatrix@C=25pt@R=5pt{
\mathcal{R}^{\mathrm{sb}}\ar[rr]|{i_\ast=i_!} &&\mathcal{S}^{\mathrm{sb}}\ar@<-1.5ex>[ll]_{i^\ast} \ar[rr]|{{j^!=j^\ast}} & &\mathcal{T}^{\mathrm{sb}}\ar@<-1.5ex>[ll]_{j_!} }$\end{center} and a right recollement
\begin{center}$\xymatrix@C=25pt@R=8pt{
\mathcal{R}^\mathrm{b}\ar[rr]|{i_\ast=i_!} &&\mathcal{S}^\mathrm{b}\ar@<+1.5ex>[ll]^{i^!} \ar[rr]|{{j^!=j^\ast}} & &\mathcal{T}^\mathrm{b}\ar@<+1.5ex>[ll]^{j_\ast} }$\end{center}
Set $\mathcal{P}_\mathcal{S}=\mathrm{Add}(M_\mathcal{S})$ and $\mathcal{P}_\mathcal{R}=\mathrm{Add}(M_\mathcal{R})$.
For $t\in\mathbb{Z}$ and $P\in\mathcal{P}_\mathcal{S}$ and $Q\in\mathcal{P}_\mathcal{R}$,
 $$\mathrm{Hom}_\mathcal{R}(M_\mathcal{R},i^!(P)[t])\cong
 \mathrm{Hom}_\mathcal{S}(i_*(M_\mathcal{R}),P[t]),$$ $$\mathrm{Hom}_\mathcal{T}(M_\mathcal{T},j^\ast(P)[t])
\cong\mathrm{Hom}_\mathcal{S}(j_!(M_\mathcal{T}),P[t]),$$
 $$\mathrm{Hom}_\mathcal{S}(M_\mathcal{S},i_*(Q))[t])
\cong\mathrm{Hom}_\mathcal{R}(i^*(M_\mathcal{S}),Q[t]),$$
As $i_*$ and $j^*$ have right adjoints, it follows from Proposition \ref{lem2.3} and Lemma  \ref{lem0.2} that $$a:=\mathrm{sup}\{\mathrm{sup}i^!(P)
 \hspace{0.03cm}|\hspace{0.03cm}
 P\in\mathcal{P}_\mathcal{S}\}=\operatorname{projdim}_{\mathcal{S}}i_*(M_\mathcal{R})<\infty,$$ $$b:=\mathrm{sup}j^*(M_\mathcal{S})=\mathrm{sup}\{\mathrm{sup}j^*(P)
 \hspace{0.03cm}|\hspace{0.03cm}
 P\in\mathcal{P}_\mathcal{S}\}=\mathrm{projdim}_\mathcal{S}j_!(M_\mathcal{T})<\infty,$$
 $$c:=\mathrm{sup}i_*(M_\mathcal{R})=\mathrm{sup}\{\mathrm{sup}i_*(Q)\hspace{0.03cm}|\hspace{0.03cm}
Q\in\mathcal{P}_\mathcal{R}\}=\mathrm{projdim}_\mathcal{R}i^*(M_\mathcal{S})<\infty.$$

 (i) If either $\operatorname{FPD}(\mathcal{R},M_\mathcal{R})=\infty$ or $\operatorname{FPD}(\mathcal{T},M_\mathcal{T})=\infty$ or $\mathrm{inf}M_\mathcal{S}=-\infty$, then (i) is trivial. We may assume
$m:=\operatorname{FPD}(\mathcal{R},M_\mathcal{R})<\infty$ and $n:=\operatorname{FPD}(\mathcal{T},M_\mathcal{T})<\infty$ and $M_\mathcal{S}\in \mathcal{S}^b$.
Let $Y \in \mathcal{S}^b \cap \mathcal{S}^{\geq 0}$ with $\operatorname{projdim}_{\mathcal{S}}Y < \infty$. There is a canonical exact triangle in $\mathcal{S}$
\begin{align}
j_! j^!(Y) \to Y \to i_* i^*(Y) \to j_! j^!(Y)[1].
\label{exact04}\tag{$\sharp$}\end{align}
As $Y \in \mathcal{S}^{\mathrm{sb}}$, $j^!(Y)\in \mathcal{T}^{\mathrm{sb}}$, and so
$\operatorname{projdim}_{\mathcal{T}}j^!(Y)< \infty$.
Since \( Y \in \mathcal{S}^{\geq 0} \), it follows that
\[
\text{Hom}_{\mathcal{T}}(M_{\mathcal{T}}[p],j^!(Y)) \cong \text{Hom}_\mathcal{S}(j_!(M_{\mathcal{T}})[p],Y) = 0 \text{ for } p >\mathrm{sup}j_!(M_\mathcal{T}).
\]Hence  $j^!(Y)\in \mathcal{T}^{-}\cap\mathcal{T}^{\geq -\mathrm{sup}j_!(M_\mathcal{T})}$, and hence $\operatorname{projdim}_{\mathcal{T}}j^!(Y)=
\operatorname{projdim}_{\mathcal{T}}j^!(Y)[-\mathrm{sup}j_!(M_\mathcal{T})]+
\mathrm{sup}j_!(M_\mathcal{T})\leq n+\mathrm{sup}j_!(M_\mathcal{T})$. For $P\in\mathcal{P}_\mathcal{S}$, one has\[
\text{Hom}_{\mathcal{S}}(j_!j^!(Y),P[t]) \cong \text{Hom}_\mathcal{T}(j^!(Y),j^!(P)[t]) = 0 \text{ for } t >n+\mathrm{sup}j^!(P)+\mathrm{sup}j_!(M_\mathcal{T}),
\]it implies that $\operatorname{projdim}_{\mathcal{S}}j_!j^!(Y)\leq n+b+\mathrm{sup}j_!(M_\mathcal{T})$ by Proposition \ref{lem2.3}, and $j_!j^!(Y)\in\mathcal{S}^{\geq \mathrm{inf}M_\mathcal{S}-n-b-\mathrm{sup}j_!(M_\mathcal{T})}$
by Remark \ref{lem:2.1}(2).
As \( Y \in \mathcal{S}^{\geq 0} \), it follows from the triangle ($\sharp$) that $i_*i^*(Y)\in\mathcal{S}^{\geq \mathrm{inf}M_\mathcal{S}-n-b-\mathrm{sup}j_!(M_\mathcal{T})-1}$.
As $i_*$ is fully faithful, $$\text{Hom}_\mathcal{R}(M_{\mathcal{R}}[p],i^*(Y))
\cong\text{Hom}_\mathcal{S}(i_*(M_{\mathcal{R}})[p],i_*i^*(Y))=0$$ for $p>n-\mathrm{inf}M_\mathcal{S}+\mathrm{sup}i_\ast(M_\mathcal{R})
+\mathrm{sup}j_!(M_\mathcal{T})+b+1$. Then $\mathrm{inf}i^*(Y)\geq-n+\mathrm{inf}M_\mathcal{S}-\mathrm{sup}i_\ast(M_\mathcal{R})
-\mathrm{sup}j_!(M_\mathcal{T})-b-1$, so $\operatorname{projdim}_{\mathcal{R}}i^*(Y)\leq m-\mathrm{inf}i^*(Y)\leq m+n-\mathrm{inf}M_\mathcal{S}+\mathrm{sup}i_\ast(M_\mathcal{R})
+\mathrm{sup}j_!(M_\mathcal{T})+b+1$. As $i^!(P)\in\mathcal{R}^-$ for $P\in\mathcal{P}_\mathcal{S}$  by \cite[Theorem 3.7(3)]{CCZ},
$$\text{Hom}_{\mathcal{S}}(i_*i^*(Y),P[t]) \cong \text{Hom}_\mathcal{R}(i^*(Y),i^!(P)[t]) = 0 \text{ for } t >\mathrm{projdim}_\mathcal{R}i^*(Y)+\mathrm{sup}i^!(P),$$so $\operatorname{projdim}_{\mathcal{S}}i_*i^*(Y)\leq
m+n-\mathrm{inf}M_\mathcal{S}+a+\mathrm{sup}i_\ast(M_\mathcal{R})
+b+\mathrm{sup}j_!(M_\mathcal{T})
+1$. Thus $\operatorname{projdim}_\mathcal{S}Y\leq\operatorname{projdim}_{\mathcal{S}}i_*i^*(Y)$ by the triangle ($\sharp$), as claimed.

(ii)
We may assume $l:=\operatorname{FPD}(\mathcal{S},M_\mathcal{S})<\infty$.
Let $X\in \mathcal{R}^\mathrm{b} \cap \mathcal{R}^{\geq 0}$ with $\operatorname{projdim}_{\mathcal{R}}X < \infty$. Then $i_\ast(X)\in \mathcal{S}^{\mathrm{sb}}\cap\mathcal{S}^{\mathrm{b}}$. As
$\text{Hom}_{\mathcal{S}}(M_{\mathcal{S}},i_*(X)[t]) \cong \text{Hom}_\mathcal{R}(i^*(M_{\mathcal{S}}),X[t]) = 0 \text{ for }t <-\mathrm{sup}i^\ast(M_\mathcal{S})$, it implies that $\mathrm{inf}i_*(X)\geq-\mathrm{sup}i^\ast(M_\mathcal{S})$. For $Q\in\mathcal{P}_\mathcal{R}$, as
$$\text{Hom}_{\mathcal{R}}(X,Q[t]) \cong \text{Hom}_\mathcal{S}(i_*(X),i_*(Q)[t]) = 0 \text{ for } t >\mathrm{projdim}_\mathcal{S}i_*(X)+\mathrm{sup}i_*(Q),$$  one has $\mathrm{projdim}_\mathcal{R}X\leq\mathrm{projdim}_\mathcal{S}i_*(X)+
c\leq
l-\mathrm{inf}i_*(X)
+c\leq l+c+\mathrm{sup}i^*(M_{\mathcal{S}})$, it yields the inequality in (ii).

(2)
Let $Z\in \mathcal{T}^\mathrm{b} \cap \mathcal{T}^{\geq 0}$ with $\operatorname{projdim}_{\mathcal{T}}Z < \infty$. As $\text{Hom}_\mathcal{S}(j_!(Z),P[t])\cong
\text{Hom}_{\mathcal{T}}(Z,j^!(P)[t])= 0 \text{ for } P\in\mathcal{P}_\mathcal{S}\ \text{and}\ t >\mathrm{projdim}_\mathcal{T}Z+\mathrm{sup}j^!(P)$, it implies that $\mathrm{projdim}_\mathcal{S}j_!(Z)\leq\operatorname{projdim}_{\mathcal{T}}Z
+b$. Set $\mathcal{P}_\mathcal{T}:=\mathrm{Add}(M_\mathcal{T})$. As $j_!$ is fully faithful,
$\text{Hom}_{\mathcal{T}}(Z,W[t]) \cong \text{Hom}_\mathcal{S}(j_!(Z),j_!(W)[t]) = 0 \text{ for } W\in\mathcal{P}_\mathcal{T}\ \text{and}\ t >\mathrm{projdim}_\mathcal{S}j_!(Z)+\mathrm{sup}j_!(W)$. Since $(j_!,j^!)$ is an adjoint pair and $Z\in \mathcal{T}^\mathrm{sb} \cap \mathcal{T}^{\geq 0}$, $\mathrm{sup}j_!(W)\leq\mathrm{sup}j_!(M_\mathcal{T})$, it yields that $\mathrm{projdim}_\mathcal{T}Z\leq\mathrm{projdim}_\mathcal{S}j_!(Z)+\mathrm{sup}j_!(M_\mathcal{T})\leq \operatorname{FPD}(\mathcal{S},M_\mathcal{S})-\mathrm{inf}j_!(Z)
+\mathrm{sup}j_!(M_{\mathcal{T}})\leq
\operatorname{FPD}(\mathcal{S},M_\mathcal{S})+\mathrm{sup}j_!(M_\mathcal{T})+e$, as claimed.
\end{proof}

\begin{cor}\label{lem6.3} Let \( R, S, T \) be rings such that there exists a recollement:
\begin{center}$\xymatrix@C=25pt@R=8pt{
\mathrm{D}(R)\ar[rr]|{i_\ast=i_!} &&\mathrm{D}(S)\ar@<-1.5ex>[ll]_{i^\ast}\ar@<+1.5ex>[ll]^{i^!} \ar[rr]|{{j^!=j^\ast}} & &\mathrm{D}(T)\ar@<-1.5ex>[ll]_{j_!} \ar@<+1.5ex>[ll]^{j_\ast} }$\end{center}
$(1)$ Suppose that $\mathrm{projdim}_{S}i_*(R)<\infty$. Then

\hspace{0.2cm} $(i)$ If \( \operatorname{FPD}(R) < \infty \) and \( \operatorname{FPD}(T) < \infty \), then \( \operatorname{FPD}(S) < \infty \).

\hspace{0.2cm} $(ii)$ If \( \operatorname{FPD}(S) < \infty \), then \( \operatorname{FPD}(R) < \infty \).\\
$(2)$ Suppose that $\mathrm{inf}\{\mathrm{inf}j_!(H)\hspace{0.03cm}|\hspace{0.03cm}H\in\mathrm{Mod}T\}\geq-e$ for some $e\in\mathbb{Z}$. If \( \operatorname{FPD}(S) < \infty \), then \( \operatorname{FPD}(T) < \infty \).
\end{cor}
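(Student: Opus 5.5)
The plan is to specialize Theorem \ref{lem6.1} to $\mathcal{R}=\mathrm{D}(R)$, $\mathcal{S}=\mathrm{D}(S)$, $\mathcal{T}=\mathrm{D}(T)$. We take as compact silting (in fact tilting) generators $M_\mathcal{R}=R$, $M_\mathcal{S}=S$, $M_\mathcal{T}=T$. Then we check that all the constants appearing in the inequalities are finite. The translation between the categorical and ring-theoretic dimensions is Remark \ref{lem:0.8}(2): $\mathrm{FPD}(\mathrm{D}(A),A)=\mathrm{FPD}(A)$ for a ring $A$. Each heart here is the corresponding module category. Also $\mathrm{inf}M_\mathcal{S}=\mathrm{inf}S=0$ and $\mathrm{sup}\,S=0$.

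For (1), the hypothesis $\mathrm{projdim}_S i_*(R)<\infty$ should give $i_*(R)\in\mathrm{D}(S)^{\mathrm{sb}}$. First, $i_*$ has the right adjoint $i^!$ and a left adjoint $i^*$ that preserves compacts. Hence $i_*$ preserves coproducts, and $\mathrm{Hom}(S[p],i_*(R))\cong \mathrm{Hom}(i^*(S)[p],R)$. Since $i^*(S)$ is compact, it lies in $\langle R\rangle$, so this Hom vanishes for $p\gg0$ and $i_*(R)\in\mathrm{D}(S)^-$. Remark \ref{lem:2.2}(1) then identifies $\mathcal{S}^{\mathrm{sb}}$ with the objects of $\mathcal{S}^-$ of finite projective dimension, which gives the claim. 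We can then apply Theorem \ref{lem6.1}(1).

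It remains to see that the constants are finite.
\begin{itemize}
\item $\mathrm{sup}\,i_*(R)<\infty$, as just shown.
\item $\mathrm{projdim}_S i_*(R)<\infty$, by hypothesis.
\item $j_!(T)$ is compact, since $j_!$ has a coproduct-preserving right adjoint $j^!$, which itself has the right adjoint $j_*$. Compact objects are perfect complexes, so $\mathrm{projdim}\,j_!(T)$ and $\mathrm{sup}\,j_!(T)$ are finite by Proposition \ref{lem2.5}.
\item $i^*(S)$ is compact in $\mathrm{D}(R)$, so $\mathrm{projdim}_R i^*(S)$ and $\mathrm{sup}\,i^*(S)$ are finite.
\end{itemize}
With these, (i) and (ii) follow directly from the inequalities (1)(i) and (1)(ii).

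For (2), we need to verify $j_!(\mathcal{T}^{\mathrm{sb}}\cap\mathcal{T}^{\geq0})\subseteq\mathcal{S}^{\geq -e'}$ for some $e'$. Take $Z\in\mathcal{T}^{\mathrm{sb}}\cap\mathcal{T}^{\geq0}$; it is a bounded complex. Induct on its amplitude using the truncation triangles $\tau^{\leq \ell-1}Z\to\tau^{\leq\ell}Z\to \mathcal{H}^\ell(Z)[-\ell]\to$. Since $j_!$ is exact, the class $\mathcal{S}^{\geq -e}$ is extension closed, and $\mathrm{inf}\,j_!(H[-\ell])=\mathrm{inf}\,j_!(H)+\ell\geq -e$ for $\ell\geq0$, we get $\mathrm{inf}\,j_!(Z)\geq -e$. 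Then Theorem \ref{lem6.1}(2) gives $\mathrm{FPD}(T)\leq\mathrm{FPD}(S)+\mathrm{sup}\,j_!(T)+e<\infty$.

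The main obstacle is the bookkeeping in (1): showing that $i_*(R)$ is bounded above and lies in $\mathcal{S}^{\mathrm{sb}}$, and that every constant is finite. This rests on standard adjoint and compactness arguments, so I expect no deep difficulty there.
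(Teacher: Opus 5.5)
Your proposal is correct and follows the paper's proof. You specialize Theorem \ref{lem6.1} to the rings as generators, use compactness of $j_!(T)$ and $i^*(S)$ to make every constant finite, and translate via Remark \ref{lem:0.8}(2), while also spelling out the checks the paper leaves implicit, namely $i_*(R)\in\mathrm{D}(S)^{\mathrm{sb}}$ and the d\'evissage verifying the hypothesis of Theorem \ref{lem6.1}(2). One small slip: since $\mathrm{Hom}(S[p],i_*(R))\cong H^{-p}(i_*(R))$, it is vanishing for $p\ll 0$ (not $p\gg0$) that gives $i_*(R)\in\mathrm{D}(S)^-$, but this also holds because $\mathrm{Hom}(i^*(S)[p],R)=0$ for $|p|\gg0$ when $i^*(S)$ is perfect.
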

\begin{proof}  By \cite[Theorem 2.2]{AKL}, $\mathrm{sup}j_!(T)$ is compact in $\mathrm{D}(S)$. By \cite[Lemma 2.9(e)]{AKLY}, $i^*(S)$ is compact in $\mathrm{D}(R)$. The statements follows from Theorem \ref{lem6.1} and Remark \ref{lem:0.8}(2).
\end{proof}

Recall that a ring epimorphism \(\lambda : R \to S\) is \emph{homological} if \(\operatorname{Tor}_i^R(S,S) = 0\) for all \(i > 0\).

\begin{cor}\label{lem6.4} Let \( R \) be a ring and \( e \) an idempotent element of \( R \). Suppose that the canonical surjection
\(\pi: R \to R/ReR \) is homological with \(\mathrm{projdim}_{R}ReR<\infty \). Then
\[ \operatorname{FPD}(R/ReR) \leq \operatorname{FPD}(R)  \leq \operatorname{FPD}(eRe) + \operatorname{FPD}(R/ReR) + \operatorname{projdim}_{R}R/ReR+ 1.\]
\end{cor}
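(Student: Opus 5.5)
We apply Theorem \ref{lem6.1} to the standard recollement induced by the idempotent $e$. Because $\pi:R\to R/ReR$ is a homological ring epimorphism, there is a recollement of derived module categories
\begin{center}$\xymatrix@C=25pt@R=8pt{
\mathrm{D}(R/ReR)\ar[rr]|{i_\ast} &&\mathrm{D}(R)\ar@<-1.5ex>[ll]_{i^\ast}\ar@<+1.5ex>[ll]^{i^!} \ar[rr]|{j^\ast} & &\mathrm{D}(eRe)\ar@<-1.5ex>[ll]_{j_!} \ar@<+1.5ex>[ll]^{j_\ast} }$\end{center}
with the following functors:
\begin{itemize}
\item $i_\ast$ is restriction along $\pi$ and $i^\ast=R/ReR\otimes^{\mathbf L}_R-$;
\item $j_!=Re\otimes^{\mathbf L}_{eRe}-$ and $j^\ast=eR\otimes_R-\cong\mathrm{RHom}_R(Re,-)$.
\end{itemize}
All three categories have the regular modules as compact tilting generators. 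By Remark \ref{lem:0.8}(2) each $\mathrm{FPD}(\mathrm{D}(-),-)$ is the classical big finitistic dimension, so the task is to compute the constants in Theorem \ref{lem6.1}(1) for this recollement.

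We first check the hypothesis of (1). We have $i_\ast(M_{\mathcal R})=R/ReR$ viewed as an $R$-module. From $0\to ReR\to R\to R/ReR\to 0$ and $\mathrm{projdim}_RReR<\infty$ we get $\mathrm{projdim}_RR/ReR\le \mathrm{projdim}_RReR+1<\infty$. Since $R/ReR$ is a module, it is bounded, so $i_\ast(R/ReR)\in\mathrm{D}(R)^{\mathrm{sb}}$ by Remark \ref{lem:2.2}(1).

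Next we compute the constants, using Remark \ref{lem:2.2}(5) to identify the projective dimensions.
\begin{itemize}
\item $\mathrm{inf}M_{\mathcal S}=\mathrm{inf}R=0$.
\item $\mathrm{sup}\,i_\ast(R/ReR)=0$, and $\mathrm{projdim}_{\mathrm{D}(R)}i_\ast(R/ReR)=\mathrm{projdim}_RR/ReR$.
\item $j_!(eRe)=Re$ is a direct summand of $R$, so it is projective. Hence $\mathrm{projdim}_{\mathrm D(R)}j_!(eRe)+\mathrm{sup}\,j_!(eRe)=0+0$. If $Re=0$ the term only becomes smaller, which does no harm.
\end{itemize}
Substituting into Theorem \ref{lem6.1}(1)(i) gives
\[\mathrm{FPD}(R)\le \mathrm{FPD}(R/ReR)+\mathrm{FPD}(eRe)+\mathrm{projdim}_RR/ReR+1,\]
which is the upper bound. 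For the lower bound, Theorem \ref{lem6.1}(1)(ii) gives $\mathrm{FPD}(R/ReR)\le \mathrm{FPD}(R)+\mathrm{projdim}_{R/ReR}i^\ast(R)+\mathrm{sup}\,i^\ast(R)$. Since $i^\ast(R)=R/ReR\otimes^{\mathbf L}_RR=R/ReR$ is the regular module, both added terms are $0$, and we obtain $\mathrm{FPD}(R/ReR)\le\mathrm{FPD}(R)$.

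The main obstacle is justifying the recollement, in particular that the left adjoint of $i_\ast$ and the functor $j_!$ are the derived tensor functors above, so that $i^\ast(R)\cong R/ReR$ and $j_!(eRe)\cong Re$. For this we would cite the standard result (Cline–Parshall–Scott, or Chen–Xi / Nicolás–Saorín) that $\pi$ homological gives a recollement with $\mathrm{D}(eRe)$ on the right via $Re\otimes^{\mathbf L}_{eRe}-$. Homologicality is exactly what makes $i_\ast$ fully faithful and identifies the kernel of $i^\ast$ with the localizing subcategory generated by $Re$, which is $j_!\mathrm{D}(eRe)$. A secondary point is that the sign conventions for $\mathrm{sup}$ and $\mathrm{inf}$ of modules, relative to the $t$-structure generated by $R$, place modules in degree $0$; this is immediate from Lemma \ref{lem0.2}.
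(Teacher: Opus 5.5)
Your proposal is correct and follows the paper's proof. The paper likewise applies Theorem~\ref{lem6.1}(1) to the recollement $\mathrm{D}(R/ReR)\to\mathrm{D}(R)\to\mathrm{D}(eRe)$ induced by the homological epimorphism $\pi$. It uses $\mathrm{projdim}_RReR<\infty$ only to get $\mathrm{projdim}_R\,i_\ast(R/ReR)<\infty$, and it uses Remark~\ref{lem:0.8}(2) to pass to the ring-theoretic $\mathrm{FPD}$. Your explicit evaluation of the constants ($\mathrm{inf}R=0$, $j_!(eRe)=Re$ projective, $i^\ast(R)=R/ReR$) is exactly what the paper leaves implicit.
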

\begin{proof}
As \(\pi: R \to R/ReR \) is homological, there is a recollement
\begin{center}$\xymatrix@C=25pt@R=8pt{
\mathrm{D}(R/ReR)\ar[rr]|{D(\pi_\ast)} &&\mathrm{D}(R)\ar@<-1.5ex>[ll]_{\ \ \ R/ReR \otimes_R^{\mathbf{L}}-}\ar@<+1.5ex>[ll]^{i^!} \ar[rr]|{eR\otimes_R^{\mathbf{L}}-} & &\mathrm{D}(eRe)\ar@<-1.5ex>[ll]_{Re\otimes_{eRe}^{\mathbf{L}}-} \ar@<+1.5ex>[ll]^{j_\ast} }$\end{center}As $D(\pi_\ast)(R/ReR)=R/ReR$ and $\mathrm{projdim}_{R}ReR<\infty$,
\(\mathrm{projdim}_{R}D(\pi_\ast)(R/ReR)<\infty \). The corollary follows from Theorem \ref{lem6.1}(1)  and Remark \ref{lem:0.8}(2).
\end{proof}

\begin{cor}\label{lem6.6} {\rm (\cite[Proposition 2.16]{JYZ}).} Let $R$ be a commutative ring, \( A \) a non-positive dg \( R \)-algebra and \( e \in A \) an idempotent. If $A$ is K-flat as an $R$-complex, then there is a non-positive dg \( R \)-algebra \( B \)  and a recollement of derived categories
\begin{center}$\xymatrix@C=25pt@R=8pt{
\mathrm{D}(B)\ar[rr]|{i_\ast=i_!} &&\mathrm{D}(A)\ar@<-1.5ex>[ll]_{i^\ast}\ar@<+1.5ex>[ll]^{i^!} \ar[rr]|{{j^!=j^\ast}} & &\mathrm{D}(eAe)\ar@<-1.5ex>[ll]_{j_!} \ar@<+1.5ex>[ll]^{j_\ast} }$\end{center}
If $\mathrm{projdim}_{eAe}eA<\infty$, then
 \( \operatorname{FPD}(B)\leq\operatorname{FPD}(A)\leq\operatorname{FPD}(eAe)+\operatorname{FPD}(B)+\operatorname{projdim}_AB+1\).
\end{cor}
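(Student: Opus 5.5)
The plan is to deduce Corollary \ref{lem6.6} from Theorem \ref{lem6.1}, applied to the recollement of \cite[Proposition 2.16]{JYZ} with $\mathcal{R}=\mathrm{D}(B)$, $\mathcal{S}=\mathrm{D}(A)$, $\mathcal{T}=\mathrm{D}(eAe)$ and compact silting generators $M_\mathcal{R}=B$, $M_\mathcal{S}=A$, $M_\mathcal{T}=eAe$. These are silting because the dg algebras are non-positive, so each is a compact silting generator of its derived category. By Remark \ref{lem:0.8}(3) (or directly from the definitions via Remark \ref{lem:2.2}(6)), $\mathrm{FPD}(\mathrm{D}(-),-)$ agrees with the finitistic projective dimension of the dg algebra. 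The bounded-cohomology hypothesis there is only needed to match the \cite{LS} definition, and I would check it is harmless here.

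In the construction of \cite{JYZ}, $j_!=Ae\otimes^{\mathbf L}_{eAe}-$ and $j^!=eA\otimes^{\mathbf L}_A-$, so $j_!(eAe)=Ae$ is a summand of $A$. Hence $\mathrm{projdim}_{\mathrm{D}(A)}j_!(M_\mathcal{T})=0$ and $\mathrm{sup}\,j_!(M_\mathcal{T})=0$. Moreover $i_\ast(B)$ fits in the triangle $Ae\otimes^{\mathbf L}_{eAe}eA\to A\to i_\ast i^\ast(A)$ with $i^\ast(A)=B$, so $i_\ast(B)=B$ as a dg $A$-module, and $\mathrm{sup}\,i_\ast(B)=\mathrm{sup}B\le 0$ since $B$ is non-positive. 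Finally $\mathrm{inf}A\geq$ the relevant bound, and in the inequality of Theorem \ref{lem6.1}(1)(i) the term $-\mathrm{inf}M_\mathcal{S}$ must be absorbed. I expect to use $\mathrm{inf}A$ finiteness (hence the silent bounded-amplitude assumption), or else a refined version of the argument with $\mathrm{inf}$ normalized to $0$ in the dg setting, where $\mathcal{S}^{\geq0}$ membership of $j_!j^!(Y)$ can be obtained directly.

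The key hypothesis $i_\ast(M_\mathcal{R})\in\mathcal{S}^{\mathrm{sb}}$ is where $\mathrm{projdim}_{eAe}eA<\infty$ enters. Since $eA$ has finite projective dimension over $eAe$, $Ae\otimes^{\mathbf L}_{eAe}eA$ is built from $j_!$ of a bounded $\mathrm{Add}(eAe)$-resolution and so has finite projective dimension over $A$. The triangle above then gives $\mathrm{projdim}_A B<\infty$, with $B\in\mathrm{D}(A)^-$, so $B\in\mathcal{S}^{\mathrm{sb}}$ by Remark \ref{lem:2.2}(1).

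Theorem \ref{lem6.1}(1)(i) then yields $\mathrm{FPD}(A)\le \mathrm{FPD}(B)+\mathrm{FPD}(eAe)+\mathrm{projdim}_AB+1$ once the constants above are substituted. For the left inequality, Theorem \ref{lem6.1}(1)(ii) gives $\mathrm{FPD}(B)\le\mathrm{FPD}(A)+\mathrm{projdim}_B i^\ast(A)+\mathrm{sup}\,i^\ast(A)$, and $i^\ast(A)=B$ gives $0+0$. The main obstacle is controlling the $-\mathrm{inf}M_\mathcal{S}$ term: I would first try to show that in this dg situation the step bounding $\mathrm{inf}\,j_!j^!(Y)$ can be sharpened. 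The idea is that $j^!$ is exact for the standard t-structures and $j_!$ is right t-exact, so $j_!j^!(Y)$ sits in $\mathcal{S}^{\geq -\mathrm{projdim}}$ directly, which removes that constant.
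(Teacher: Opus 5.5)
Your reduction is the route the paper intends. The paper gives no separate proof; the corollary is evidently meant to follow from Theorem~\ref{lem6.1}(1) with $M_\mathcal{R}=B$, $M_\mathcal{S}=A$, $M_\mathcal{T}=eAe$, exactly as Corollary~\ref{lem6.4} does. Your bookkeeping of the other constants is correct: $\mathrm{projdim}_A Ae=0=\mathrm{sup}\,Ae$, $\mathrm{sup}\,B\leq0$, $i^\ast(A)=B$, and $B\in\mathcal{S}^{\mathrm{sb}}$ follows from $\mathrm{projdim}_{eAe}eA<\infty$.

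The gap is the term you flagged, $-\mathrm{inf}M_\mathcal{S}=-\mathrm{inf}A$, and neither of your remedies removes it.
\begin{itemize}
\item Finiteness of $\mathrm{inf}A$ only makes the bound finite. It yields $\mathrm{FPD}(A)\leq\mathrm{FPD}(eAe)+\mathrm{FPD}(B)+\mathrm{projdim}_AB-\mathrm{inf}A+1$, where $-\mathrm{inf}A$ is the amplitude of $A$ when $H^0(A)\neq0$. Without bounded cohomology, Theorem~\ref{lem6.1}(1)(i) gives nothing, since its proof assumes $M_\mathcal{S}\in\mathcal{S}^{\mathrm{b}}$.
\item Your sharpening confuses the two bounds. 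It is true that $j^!$ is t-exact and $j_!$ is right t-exact. But right t-exactness gives $j_!(\mathcal{T}^{\leq0})\subseteq\mathcal{S}^{\leq0}$, which is an upper bound on $\mathrm{sup}$. The argument needs a lower bound on $\mathrm{inf}\,j_!j^!(Y)$, and that is governed by the cohomological amplitude of $Ae$. This is precisely what Remark~\ref{lem:2.1}(2) encodes through $\mathrm{inf}M_\mathcal{S}$.
\end{itemize}

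In fact the term cannot be removed, so the inequality as stated is false for genuinely dg $A$. Here is a counterexample.
\begin{itemize}
\item Let $F$ be a field and $d\geq1$. Let $A$ be the graded path algebra, with zero differential, of the quiver $1\xrightarrow{\alpha}2$, where $\alpha=e_2\alpha e_1$ sits in degree $-d$. Take $e=e_1$.
\item Then $eAe=F$ and $eA=Fe_1$, so $\mathrm{projdim}_{eAe}eA=0$. Also $Ae\otimes^{\mathbf L}_{eAe}eA\cong Ae_1\hookrightarrow A$, so $B\simeq F$ and $i_\ast(B)=S_2=Ae_2$ is projective. Hence the right-hand side of the corollary is $0+0+0+1=1$.
\item The non-split triangle $S_2[d]\to Ae_1\to S_1\to S_2[d+1]$ shows that $S_1\in\mathcal{P}\ast\mathcal{P}[d+1]$ and $\mathrm{Hom}_{\mathrm{D}(A)}(S_1,S_2[d+1])\neq0$, with $S_2\in\mathcal{H}_A$.
\item So $S_1\in\mathrm{D}(A)^{\mathrm b}\cap\mathrm{D}(A)^{\geq0}$ has $\mathrm{projdim}_AS_1=d+1$, giving $\mathrm{FPD}(A)\geq d+1>1$.
\end{itemize}
For $Y=S_1$, $j^!(Y)=F$ has projective dimension $0$, while $\mathrm{inf}\,j_!j^!(Y)=\mathrm{inf}\,Ae_1=-d$. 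This refutes your sharpening directly. Theorem~\ref{lem6.1}(1)(i) gives exactly $d+1$ here, so the $-\mathrm{inf}A$ term is sharp.

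What your argument, and the paper's implicit one, actually proves is the corollary with $-\mathrm{inf}A$ added on the right and bounded cohomology of $A$ assumed. The stated form holds only when $H^\ast(A)$ is concentrated in degree $0$, which is the ring case of Corollary~\ref{lem6.4}.
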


We now establish  mutual bounds for fpd under recollements.

\begin{lem}\label{lem4.8} Let
 $X\in\mathcal{T}^\mathrm{b}_c$ with $\mathrm{projdim}_\mathcal{T}X<\infty$. Then $$\mathrm{projdim}_\mathcal{T}X=\mathrm{sup}\{n\in\mathbb{Z}\hspace{0.03cm}|\hspace{0.03cm}
\mathrm{Hom}_\mathcal{T}(X,M[n])\neq0\}.$$
\end{lem}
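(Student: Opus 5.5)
By the last assertion of Proposition \ref{lem2.3}, if $\mathrm{projdim}_\mathcal{T}X<\infty$ then
\[
\mathrm{projdim}_\mathcal{T}X=\mathrm{sup}\{n\in\mathbb{Z}\hspace{0.03cm}|\hspace{0.03cm}\mathrm{Hom}_\mathcal{T}(X,\mathcal{P}[n])\neq0\}.
\]
Here $\mathcal{P}=\mathrm{Add}(M)$ contains arbitrary coproducts $M^{(\Lambda)}$. So the plan is to show that for our $X$ one may replace $\mathcal{P}$ by $M$ itself. One inequality is immediate: $M\in\mathcal{P}$, so the supremum over $M$ is at most $\mathrm{projdim}_\mathcal{T}X$. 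For the reverse, put $n=\mathrm{projdim}_\mathcal{T}X$ and choose $P\in\mathcal{P}$ with $\mathrm{Hom}_\mathcal{T}(X,P[n])\neq0$. Since $P$ is a summand of some $M^{(\Lambda)}$, we get $\mathrm{Hom}_\mathcal{T}(X,M^{(\Lambda)}[n])\neq0$. It remains to show this forces $\mathrm{Hom}_\mathcal{T}(X,M[n])\neq0$.

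The key input is the proof of Theorem \ref{lem4.2}. There it is shown that $X\in\mathcal{T}^\mathrm{b}_c$ with $\mathrm{projdim}_\mathcal{T}X<\infty$ is a retract of a compact object $E[-s]$. Hence $\mathcal{T}^c\cap\mathcal{T}^\mathrm{b}=\{X\in\mathcal{T}^\mathrm{b}_c\hspace{0.03cm}|\hspace{0.03cm}\mathrm{projdim}_\mathcal{T}X<\infty\}$, and in particular $X$ is compact. Compactness of $X$ gives
\[
\mathrm{Hom}_\mathcal{T}(X,M^{(\Lambda)}[n])\cong\coprod_{\lambda\in\Lambda}\mathrm{Hom}_\mathcal{T}(X,M[n]),
\]
so the left side is nonzero only if $\mathrm{Hom}_\mathcal{T}(X,M[n])\neq0$. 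This yields the reverse inequality, hence equality; the supremum is attained at $n$.

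If one prefers not to quote the retract argument, there is an alternative. Proposition \ref{lem2.5} shows that $X$ lies in $\mathcal{P}[-\mathrm{sup}X]\ast\cdots\ast\mathcal{P}[-\mathrm{sup}X+d]$. Compactness, again via the retract, then lets us use $\mathrm{add}(M)$ in place of $\mathcal{P}$, as in the proof of Theorem \ref{lem4.2}. Either way, the only nontrivial step is establishing that $X$ is compact, and that is exactly what the argument in Theorem \ref{lem4.2} provides. That argument uses that $\mathrm{Hom}_\mathcal{T}(X[s],D)=0$ for $D\in\mathcal{T}^{\leq-(n+s+1)}$, which follows from Proposition \ref{lem2.3}(2).
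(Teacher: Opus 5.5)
Your proof is correct and follows the paper's route: both get compactness of $X$ from the retract argument in the proof of Theorem \ref{lem4.2} and then use $\mathrm{Hom}_\mathcal{T}(X,M^{(\Lambda)}[n])\cong\coprod_{\lambda\in\Lambda}\mathrm{Hom}_\mathcal{T}(X,M[n])$. The only difference is cosmetic: the paper reruns the sppj argument of Proposition \ref{lem2.3} with $M^{(\Lambda)}\to H$ directly, whereas you cite its last assertion and pass to a summand of $M^{(\Lambda)}$ (small citation fix: $\mathrm{Hom}_\mathcal{T}(X[s],D)=0$ follows from condition (1) of Proposition \ref{lem2.3}, not (2), since $D\in\mathcal{T}^{\leq-(n+s+1)}\subseteq\mathcal{T}^-$ need not be bounded).
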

\begin{proof} Let $\mathrm{projdim}_\mathcal{T}X=n$. Then $X\in\mathcal{T}^c$ by the proof of Theorem \ref{lem4.2} and $\mathrm{Hom}_\mathcal{T}(X,H[n])\neq0$ for some $H\in\mathcal{H}_M$. Consider the exact triangle $H'\rightarrow M^{(\Lambda)}\stackrel{f}\rightarrow H\rightarrow H'[1]$ with $f$ a sppj morphism and $\Lambda$ a set. As $\mathrm{sup}H'\leq0$, one has the following exact sequence $$\mathrm{Hom}_\mathcal{T}(X,M^{(\Lambda)}[n])\rightarrow \mathrm{Hom}_\mathcal{T}(X,H[n])\rightarrow
\mathrm{Hom}_\mathcal{T}(X,H'[n+1])=0,$$so $\mathrm{Hom}_\mathcal{T}(X,M[n])\neq0$. Conversely, let $\mathrm{sup}\{n\in\mathbb{Z}\hspace{0.03cm}|\hspace{0.03cm}
\mathrm{Hom}_\mathcal{T}(X,M[n])\neq0\}=n$. Then $\mathrm{projdim}_\mathcal{T}X=m\geq n$. If $m>n$, then $\mathrm{Hom}_\mathcal{T}(X,M[m])\neq0$ by the preceding proof, a contradiction. Thus $\mathrm{projdim}_\mathcal{T}X=n$.
\end{proof}

The following theorem provides analogous inequalities for fpd under recollements.

\begin{thm}\label{lem7.1} Given a recollement $(\star)$ of compactly generated triangulated categories \( \mathcal{R}, \mathcal{S} \) and \( \mathcal{T} \) with silting compact generators $M_\mathcal{R},M_\mathcal{S},M_\mathcal{T}$.\\
$(1)$ Suppose that $i_*(M_\mathcal{R})\in \mathcal{S}^{c}$. Then

\hspace{0.2cm} $(i)$ $\operatorname{fpd}(\mathcal{S},M_\mathcal{S})\leq\operatorname{fpd}(\mathcal{R},M_\mathcal{R})+
\operatorname{fpd}(\mathcal{T},M_\mathcal{T})-\mathrm{inf}M_\mathcal{S}
+\mathrm{projdim}_\mathcal{S}i_\ast(M_\mathcal{R})+\mathrm{sup}i_\ast(M_\mathcal{R})
+\mathrm{projdim}_\mathcal{S}j_!(M_\mathcal{T})+\mathrm{sup}j_!(M_\mathcal{T})
+1$.

\hspace{0.2cm} $(ii)$ $\operatorname{fpd}(\mathcal{R},M_\mathcal{R})\leq\operatorname{fpd}(\mathcal{S},M_\mathcal{S})
+\mathrm{projdim}_\mathcal{R}i^\ast(M_\mathcal{S})+\mathrm{sup}i^*(M_{\mathcal{S}})$.\\
$(2)$ If $j_!(\mathcal{T}^c\cap\mathcal{T}^{\geq0})\subseteq\mathcal{S}^{\geq-e}$ for some $e\in\mathbb{Z}$, then $\operatorname{fpd}(\mathcal{T},M_\mathcal{T})\leq
\operatorname{fpd}(\mathcal{S},M_\mathcal{S})+\mathrm{sup}j_!(M_\mathcal{T})+e$.
\end{thm}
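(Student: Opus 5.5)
The plan is to run the argument of Theorem \ref{lem6.1} again, with the big categories $\mathcal{T}^{\mathrm{sb}}$ replaced by the compact ones. We use the description of Theorem \ref{lem4.2}: $\mathrm{fpd}(\mathcal{T},M)=\mathrm{sup}\{\mathrm{projdim}_\mathcal{T}X+\mathrm{inf}X\mid X\in\mathcal{T}^c\cap\mathcal{T}^\mathrm{b}\}$. In other words, we only need to bound $\mathrm{projdim}$ of compact bounded objects lying in $\mathcal{T}^{\geq0}$.

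\textbf{Which functors preserve compactness.}
\begin{itemize}
\item $j_!$ and $i^*$ preserve compacts, since their right adjoints $j^*$ and $i_*$ preserve coproducts.
\item $j^*$ preserves compacts, since $j_*$ preserves coproducts. This holds because $i^!$ commutes with coproducts: $i_*$ preserves compacts by the hypothesis $i_*(M_\mathcal{R})\in\mathcal{S}^c$, and $M_\mathcal{R}$ generates $\mathcal{R}^c$ as a thick subcategory.
\item $i_*$ preserves compacts, by the hypothesis and thickness.
\end{itemize}
With the hypothesis $i_*(M_\mathcal{R})\in\mathcal{S}^c$, the sppj/$\mathcal{S}^{\mathrm{sb}}$ considerations of \cite[Theorem 3.7]{CCZ} are no longer needed. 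We also have $i_*(M_\mathcal{R})\in\mathcal{S}^{\mathrm{sb}}$, because compact objects lie in $\mathcal{S}^-$ and have finite projective dimension by Proposition \ref{lem2.5}. So the constants $a,b,c$ from the proof of Theorem \ref{lem6.1} are again finite and given by the same adjunction formulas.

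\textbf{(1)(i).} Assume $m=\mathrm{fpd}(\mathcal{R},M_\mathcal{R})$, $n=\mathrm{fpd}(\mathcal{T},M_\mathcal{T})$ and $-\mathrm{inf}M_\mathcal{S}$ are finite. Take $Y\in\mathcal{S}^c\cap\mathcal{S}^\mathrm{b}\cap\mathcal{S}^{\geq0}$ and the triangle $(\sharp)$.
\begin{itemize}
\item $j^!(Y)$ is compact, and $\mathrm{inf}\,j^!(Y)\geq-\mathrm{sup}j_!(M_\mathcal{T})$.
\item To conclude $j^!(Y)\in\mathcal{T}^\mathrm{b}$, use the upper bound $\mathrm{sup}\,j^!(Y)\leq\mathrm{sup}Y+b$, which follows from Lemma \ref{lem0.2} and the adjunction.
\end{itemize}
The definition of $n$ then gives $\mathrm{projdim}_\mathcal{T}j^!(Y)\leq n+\mathrm{sup}j_!(M_\mathcal{T})$. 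The rest follows the proof of Theorem \ref{lem6.1}(1)(i) line by line:
\begin{itemize}
\item Bound $\mathrm{projdim}_\mathcal{S}j_!j^!(Y)$ via the adjunction and Proposition \ref{lem2.3}.
\item Get a lower bound on $\mathrm{inf}$ of $j_!j^!(Y)$ from Remark \ref{lem:2.1}(2); this is where $\mathrm{inf}M_\mathcal{S}$ enters.
\item From $(\sharp)$, obtain a lower bound on $\mathrm{inf}\,i^*(Y)$.
\item $i^*(Y)$ is compact and bounded, since $i_*i^*(Y)$ is the cone of two bounded objects and $i_*$ is fully faithful. Hence $\mathrm{projdim}_\mathcal{R}i^*(Y)\leq m-\mathrm{inf}\,i^*(Y)$.
\item Push forward with $\mathrm{Hom}_\mathcal{S}(i_*i^*Y,P[t])\cong\mathrm{Hom}_\mathcal{R}(i^*Y,i^!P[t])$, using $\mathrm{sup}\,i^!(P)\leq a$.
\end{itemize}
The triangle then bounds $\mathrm{projdim}_\mathcal{S}Y$ by the maximum of the two bounds, which gives the stated constant.

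\textbf{(1)(ii).} Take $X\in\mathcal{R}^c\cap\mathcal{R}^\mathrm{b}\cap\mathcal{R}^{\geq0}$. Then $i_*X$ is compact and bounded, and $\mathrm{inf}\,i_*X\geq-\mathrm{sup}\,i^*(M_\mathcal{S})$. Since $i_*$ is fully faithful, $\mathrm{projdim}_\mathcal{R}X\leq\mathrm{projdim}_\mathcal{S}i_*X+c$, and then $\mathrm{projdim}_\mathcal{S}i_*X\leq l-\mathrm{inf}\,i_*X$ with $l=\mathrm{fpd}(\mathcal{S},M_\mathcal{S})$.

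\textbf{(2).} Take $Z\in\mathcal{T}^c\cap\mathcal{T}^\mathrm{b}\cap\mathcal{T}^{\geq0}$. Then $j_!Z$ is compact, lies in $\mathcal{S}^{\geq-e}$ by hypothesis, and lies in $\mathcal{S}^-$ because it is compact. So it is bounded, and $\mathrm{projdim}_\mathcal{S}j_!Z\leq l+e$. Full faithfulness of $j_!$ gives $\mathrm{projdim}_\mathcal{T}Z\leq\mathrm{projdim}_\mathcal{S}j_!Z+\mathrm{sup}j_!(M_\mathcal{T})$. Here the test objects can be taken to be $W=M_\mathcal{T}$ itself, by Lemma \ref{lem4.8}.

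\textbf{Main obstacle.} The delicate point is verifying that the objects obtained from the recollement are both compact and bounded. Compactness is handled by the adjoint arguments above. Boundedness of $j^!(Y)$ and $i^*(Y)$ requires the finite shifts $b$ and $c$; in (2) it is supplied by the hypothesis on $e$.
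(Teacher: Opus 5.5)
Your proposal is correct and is essentially the paper's proof. The paper invokes \cite[Lemma 2.9(2)]{CCZ} to obtain the induced left recollement $\mathcal{R}^c\to\mathcal{S}^c\to\mathcal{T}^c$ and then reruns the argument of Theorem \ref{lem6.1}; you do the same, except that you check by hand that $i_*,i^*,j_!,j^!$ preserve compacts, and your taking the maximum over both ends of $(\sharp)$ is more careful than the paper's bare claim $\mathrm{projdim}_\mathcal{S}Y\leq\mathrm{projdim}_\mathcal{S}i_*i^*(Y)$. The one step you leave implicit is that the bound for $j_!j^!(Y)$ is dominated by the stated constant, which follows from $\operatorname{fpd}(\mathcal{R},M_\mathcal{R})\geq\mathrm{inf}M_\mathcal{R}\geq\mathrm{inf}\,i_*(M_\mathcal{R})-\mathrm{sup}\,i_*(M_\mathcal{R})\geq\mathrm{inf}M_\mathcal{S}-\mathrm{projdim}_\mathcal{S}i_*(M_\mathcal{R})-\mathrm{sup}\,i_*(M_\mathcal{R})$.
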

\begin{proof}  As $i_*(M_\mathcal{R})\in \mathcal{S}^{c}$, it follows by \cite[Lemma 2.9(2)]{CCZ} that $(\star)$ induces a left recollement:
 \begin{center}$\xymatrix@C=25pt@R=5pt{
\mathcal{R}^{c}\ar[rr]|{i_\ast=i_!} &&\mathcal{S}^{c}\ar@<-1.5ex>[ll]_{i^\ast} \ar[rr]|{{j^!=j^\ast}} & &\mathcal{T}^{c}\ar@<-1.5ex>[ll]_{j_!} }$\end{center}
Now, arguing analogously to the proof of Theorem \ref{lem6.1}, the desired inequalities hold.
\end{proof}

Let \( R, S \) and \( T \) be rings with identity, and let \( \lambda : R \to S \) and \( \mu : R \to T \) be ring homomorphisms.
Suppose that \( M \) is an \( S \)-\( T \)-bimodule together with an element \( m \in M \). We say that the quadruple \( (\lambda, \mu, M, m) \) is an \emph{exact context} if
\[
0 \longrightarrow R \xrightarrow{(\lambda, \mu)} S \oplus T \xrightarrow{\begin{pmatrix} \cdot m \\ -m\cdot \end{pmatrix}} M \longrightarrow 0
\]
is an exact sequence of abelian groups, where \( \cdot m \) and \( m \cdot \) denote the right and left multiplication, respectively. Given an exact context \( (\lambda, \mu, M, m) \), there is a ring with identity, called the \emph{noncommutative tensor product} of \( (\lambda, \mu, M, m) \) and denoted by \( T \boxtimes_R S \), see \cite{CX}.

\begin{cor}\label{lem7.5} Let \((\lambda, \mu, M, m)\) be an exact context with \(\operatorname{Tor}_i^R(T, S) = 0\) for all \(i \geq 1\). If \( {_R}S\) has a finite projective resolution by finitely generated projective modules, then

\hspace{0.2cm} $(i)$ \(\operatorname{fpd}(T \boxtimes_R S) \leq \operatorname{fpd}(S) + \operatorname{fpd}(T) + 1\).

\hspace{0.2cm} $(ii)$ \(\operatorname{fpd}(S) \leq \operatorname{fpd}\begin{pmatrix} S & M \\ 0 & T \end{pmatrix} \leq \operatorname{fpd}(R) + \operatorname{fpd}(T \boxtimes_R S) + \max\{1, \operatorname{projdim}_RS\} + 3\).
\end{cor}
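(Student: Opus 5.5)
The plan is to deduce both parts from Theorem \ref{lem7.1} by applying it to two recollements of derived module categories and tracking the constants. For rings one has $\mathrm{inf}\,M_\mathcal{S}=0$ and all generators lie in the heart, and $\mathrm{fpd}(\mathrm{D}(A),A)=\mathrm{fpd}(A)$ by Remark \ref{lem:3.8}(1). So each constant in Theorem \ref{lem7.1} reduces to a $\mathrm{projdim}$ or $\mathrm{sup}$ of an explicit complex of modules.

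\textbf{The triangular matrix ring.} Let $\Lambda=\begin{pmatrix} S & M\\ 0 & T\end{pmatrix}$ and $e$ the idempotent corresponding to $T$. There is the standard recollement with outer terms $\mathrm{D}(S)$ and $\mathrm{D}(T)$ and middle term $\mathrm{D}(\Lambda)$. I would arrange it so that $i_*$ sends $S$ to a projective $\Lambda$-module (namely $\Lambda(1-e)$) and $j_!$ sends $T$ to the projective module $\Lambda e$.
\begin{itemize}
\item Then $i_*(S)\in\mathrm{D}(\Lambda)^c$, and both $\mathrm{projdim}$ and $\mathrm{sup}$ of $i_*(S)$ and of $j_!(T)$ vanish.
\item Theorem \ref{lem7.1}(1)(i) gives $\mathrm{fpd}(\Lambda)\le \mathrm{fpd}(S)+\mathrm{fpd}(T)+1$.
\item Here $i^*(\Lambda)$ is a projective $S$-module (a summand of copies of $S$), so Theorem \ref{lem7.1}(1)(ii) gives $\mathrm{fpd}(S)\le \mathrm{fpd}(\Lambda)$. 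This is the first inequality in (ii).
\end{itemize}
If the orientation of the idempotent forces $S$ into the $\mathcal{T}$-position instead, I would use Theorem \ref{lem7.1}(2) with $e=0$, which applies because $j_!$ is then $\Lambda e\otimes-$, an exact functor onto a projective.

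\textbf{The Chen--Xi recollement.} By \cite{CX}, the Tor-vanishing hypothesis $\operatorname{Tor}^R_{i}(T,S)=0$ for $i\ge1$ yields a recollement
\[
\mathrm{D}(T\boxtimes_R S)\rightleftarrows \mathrm{D}(\Lambda)\rightleftarrows \mathrm{D}(R),
\]
with $i_*$ induced by the ring map $\Lambda\to$ (a matrix ring Morita equivalent to) $T\boxtimes_R S$, and $j_!(R)$ a two-term complex built from $\lambda$ and $\mu$. Two checks are needed.
\begin{itemize}
\item \textbf{Compactness.} From the exact context $0\to R\to S\oplus T\to M\to0$ and the finite resolution of ${}_RS$ by finitely generated projectives, $i_*(T\boxtimes_R S)$ is perfect over $\Lambda$. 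Its projective dimension is at most $\max\{1,\mathrm{projdim}_RS\}$ and its $\mathrm{sup}$ is $0$.
\item \textbf{The $j_!$ constants.} One has $\mathrm{sup}\,j_!(R)\le 1$ and $\mathrm{projdim}_\Lambda j_!(R)\le 1$, since $j_!(R)$ is a complex of projective $\Lambda$-modules concentrated in two degrees.
\end{itemize}
Feeding these into Theorem \ref{lem7.1}(1)(i) gives
\[
\mathrm{fpd}(\Lambda)\le \mathrm{fpd}(T\boxtimes_RS)+\mathrm{fpd}(R)+\max\{1,\mathrm{projdim}_RS\}+0+1+1+1,
\]
which is the second inequality in (ii). Theorem \ref{lem7.1}(1)(ii) applied to the same recollement gives $\mathrm{fpd}(T\boxtimes_RS)\le\mathrm{fpd}(\Lambda)+\mathrm{projdim}\,i^*(\Lambda)+\mathrm{sup}\,i^*(\Lambda)$. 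Since $i^*(\Lambda)=(T\boxtimes_RS)\otimes^{\mathbf L}_\Lambda\Lambda$ is projective, those constants vanish. Combining with the first step yields (i).

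\textbf{Main obstacle.} The delicate step is identifying the functors of the Chen--Xi recollement concretely enough to read off $\mathrm{projdim}$ and $\mathrm{sup}$ of $i_*(T\boxtimes_RS)$ and $j_!(R)$, and checking that the compactness hypothesis of Theorem \ref{lem7.1}(1) holds. I would first try to realise $i_*(T\boxtimes_RS)$ as the cone of an explicit map between $\Lambda$-modules of the form $\Lambda e\otimes_R P_\bullet$, with $P_\bullet$ the finite projective resolution of ${}_RS$. From that description both bounds follow, and the $\max\{1,\cdot\}$ appears because the cone adds one degree when $S$ is already projective over $R$.
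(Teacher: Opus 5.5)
Your overall route is the paper's. The triangular recollement $\mathrm{D}(S)\to\mathrm{D}(\Lambda)\to\mathrm{D}(T)$ (left modules, $i_*(S)=\Lambda e_1$, $j_!(T)=\Lambda e_2$) gives $\mathrm{fpd}(S)\le\mathrm{fpd}(\Lambda)\le\mathrm{fpd}(S)+\mathrm{fpd}(T)+1$. The Chen--Xi recollement, with $C=M_2(T\boxtimes_RS)$, gives $\mathrm{fpd}(T\boxtimes_RS)\le\mathrm{fpd}(\Lambda)$ and the upper bound in (ii). Part (i) and the first inequality of (ii) are fine.

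The gap is in the constants for the last inequality: your bound $\mathrm{projdim}_\Lambda i_*(T\boxtimes_RS)\le\max\{1,\mathrm{projdim}_RS\}$ is false. Take $R=T=k[x]$, $\mu=\mathrm{id}$, $\lambda\colon k[x]\to k=S$, $M=k$, $m=1$. This is an exact context, the Tor condition is automatic, $\mathrm{projdim}_RS=1$, $C=M_2(k)$ and $\Lambda=\begin{pmatrix}k&k\\0&k[x]\end{pmatrix}$. Then $i_*(T\boxtimes_RS)$ is the column module $N=\begin{pmatrix}k\\k\end{pmatrix}$, with resolution $0\to\Lambda e_1\xrightarrow{\cdot m}\Lambda e_2\xrightarrow{\cdot x}\Lambda e_2\to N\to0$. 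Since $\mathrm{Hom}_\Lambda(\Lambda e_2,\Lambda e_1)=e_2\Lambda e_1=0$, one gets $\mathrm{Ext}^2_\Lambda(N,\Lambda e_1)\cong k\neq0$. So $\mathrm{projdim}_\Lambda N=2>1$.

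The mechanism is general. Let $Q=j_!(R)$ be the complex $\Lambda e_1\xrightarrow{\cdot m}\Lambda e_2$ in degrees $-1,0$; its differential is multiplication by $m$, and $\lambda,\mu$ enter only through the right $R$-action. Then $j^!(\Lambda e_1)=\mathrm{Hom}_\Lambda(Q,\Lambda e_1)\cong S[-1]$, and $i^*(\Lambda e_1)$ is a column of $C$. So the recollement triangle for $\Lambda e_1$ becomes $\Lambda e_1\to i_*(T\boxtimes_RS)\to j_!(S)\to\Lambda e_1[1]$. Here $\mathrm{projdim}_\Lambda j_!(S)$ can equal $\mathrm{projdim}_RS+1$: the cone costs an extra degree for every value of $\mathrm{projdim}_RS$, not only when ${}_RS$ is projective. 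The valid estimate is the one the paper imports from \cite[Corollary 5.8(1)]{CX0}, namely $\mathrm{projdim}_\Lambda i_*(\cdot)\le\max\{2,\mathrm{projdim}_RS+1\}$.

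Your final constant survives only because you also overcounted $j_!(R)$. For a two-term complex of projectives in adjacent degrees, $\mathrm{projdim}+\mathrm{sup}\le1$, since this sum is shift-invariant. With the normalization above, $\mathrm{sup}\,Q\le0$ and $\mathrm{projdim}_\Lambda Q\le1$, so the $j_!$ terms contribute $1$, not $2$. With the correct values, Theorem \ref{lem7.1}(1)(i) gives the constant $\max\{2,\mathrm{projdim}_RS+1\}+0+1+0+1=\max\{1,\mathrm{projdim}_RS\}+3$, exactly as in the paper. As written, your derivation reaches the right number through two compensating errors, one of which is a false inequality.
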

\begin{proof}
(i) Let \( B := \begin{pmatrix} S & M \\ 0 & T \end{pmatrix},\ C := M(T \otimes_R S) \). As \( \operatorname{Tor}_i^R(T, S) = 0 \) for all \( i \geq 1 \), there exists a recollement:
\begin{center}$\xymatrix@C=25pt@R=8pt{
\mathrm{D}(C)\ar[rr]|{i_\ast} &&\mathrm{D}(B)\ar@<-1.5ex>[ll]_{\ \ \ C \otimes_B^{\mathrm{L}}-}\ar@<+1.5ex>[ll] \ar[rr]|{j^!} & &\mathrm{D}(R)\ar@<-1.5ex>[ll]_{j_!} \ar@<+1.5ex>[ll]}$\end{center} (see \cite{CX0} for details). By \cite[Corollary 5.8(1)]{CX0},
  $\operatorname{projdim}_Bi_\ast(C)\leq \max\{2, \operatorname{projdim}_RS + 1\}$ and \( C \otimes_B^{\mathrm{L}} B \cong C \) in \( \mathcal{D}(C) \), it follows from Theorem \ref{lem7.1} that \( \operatorname{fpd}(T \boxtimes_R S)=\operatorname{fpd}(C) \leq \operatorname{fpd}(B)\leq \operatorname{fpd}(S) + \operatorname{fpd}(T) + 1 \).

(ii) As $\operatorname{projdim}_Bj_!(R)=1$ and $\operatorname{sup}j_!(R)=0$, it follows from Theorem \ref{lem7.1} that
$\operatorname{fpd}(S) \leq\operatorname{fpd}(B) \leq \operatorname{fpd}(R) + \operatorname{fpd}(T \boxtimes_R S) + \max\{2, \operatorname{projdim}_RS + 1\} + 1 + 1$.
\end{proof}

Given a ring \( R \) and an \( R \)-\( R \)-bimodule \( M \), the \emph{trivial extension}, \( R \ltimes M \), of \( R \) by \( M \) is a ring  with abelian group \( R \oplus M \) and multiplication:
\[
(r, m)(r', m') = (rr', rm' + mr') \quad \text{for } r, r' \in R \text{ and } m, m' \in M.
\]

\begin{cor}\label{lem7.6} Let \( \lambda : R \to S \) be a ring epimorphism and \( M \) an \( S \)-\( S \)-bimodule such that \( \operatorname{Tor}_i^R(M, S) = 0 \) for all \( i \geq 1 \). If \( {_R}S \) has a finite projective resolution by finitely generated projective \( R \)-modules, then

$(a)$ \( \operatorname{fpd}(S \ltimes M) \leq \operatorname{fpd}(S) + \operatorname{fpd}(R \ltimes M) + 1 \).

$(b)$ \( \operatorname{fpd}(S) \leq \operatorname{fpd}(R) + \operatorname{fpd}(S \ltimes M)  + \max\{1, \operatorname{projdim}_RS\} + 3\).
\end{cor}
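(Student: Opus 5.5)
The plan is to obtain the statement as a special case of Corollary \ref{lem7.5}, by building an exact context whose noncommutative tensor product is $S\ltimes M$. This follows Chen--Xi \cite{CX}. Let $T:=R\ltimes M$, where $M$ is viewed as an $R$-$R$-bimodule through $\lambda$. Take $\mu:R\to T$, $r\mapsto(r,0)$, which is a ring homomorphism. Take the $S$-$T$-bimodule $N:=S\ltimes M=S\oplus M$, with $S$ acting on the left by left multiplication in $S\ltimes M$. The ring $T$ acts on the right through the ring homomorphism $R\ltimes M\to S\ltimes M$, $(r,m)\mapsto(\lambda(r),m)$. Finally take the element $n:=(1,0)\in N$.

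The first step is to check that $(\lambda,\mu,N,n)$ is an exact context. For $s\in S$ we have $s\cdot n=(s,0)$, and for $(r,m)\in T$ we have $n\cdot(r,m)=(\lambda(r),m)$. The relevant map $S\oplus T\to N$ is therefore $(s,(r,m))\mapsto(s-\lambda(r),-m)$.
\begin{itemize}
\item It is surjective: $(s,(0,-m))$ maps to $(s,m)$.
\item Its kernel is $\{(\lambda(r),(r,0))\}$, which is precisely the image of $(\lambda,\mu)$.
\item The map $(\lambda,\mu)$ is injective because $\mu$ is.
\end{itemize}
Next I will invoke the computation in \cite{CX} of the noncommutative tensor product for contexts of this type. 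It identifies $T\boxtimes_R S$ with $S\ltimes M$; this uses that $\lambda$ is a ring epimorphism, so $S\otimes_RS\cong S$ and $M\otimes_RS\cong M$. I expect this identification to be the main obstacle, since it requires unwinding the multiplication on $T\boxtimes_R S$ defined in \cite{CX}. If necessary I would verify directly that the underlying group $T\otimes_RS\cong S\oplus(M\otimes_RS)\cong S\oplus M$ carries the trivial-extension multiplication.

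The Tor hypothesis of Corollary \ref{lem7.5} holds because $\operatorname{Tor}_i^R(T,S)\cong\operatorname{Tor}_i^R(R,S)\oplus\operatorname{Tor}_i^R(M,S)=0$ for $i\geq1$. The finiteness assumption on ${_R}S$ is the same as in Corollary \ref{lem7.5}. Part (i) of that corollary then gives
\[
\operatorname{fpd}(S\ltimes M)\leq\operatorname{fpd}(S)+\operatorname{fpd}(R\ltimes M)+1,
\]
which is (a). Part (ii) gives
\[
\operatorname{fpd}(S)\leq\operatorname{fpd}\begin{pmatrix} S & N\\ 0 & T\end{pmatrix}\leq\operatorname{fpd}(R)+\operatorname{fpd}(S\ltimes M)+\max\{1,\operatorname{projdim}_RS\}+3,
\]
and dropping the middle term yields (b).
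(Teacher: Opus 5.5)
Your proposal is correct and follows the paper's argument: take $T=R\ltimes M$ with $\mu$ the inclusion, use the Chen--Xi identification $T\boxtimes_R S\simeq S\ltimes M$ for the exact pair $(\lambda,\mu)$, note that $\operatorname{Tor}_i^R(T,S)\cong\operatorname{Tor}_i^R(M,S)=0$, and apply Corollary~\ref{lem7.5}. The one difference is that you write out and check the exact context $(\lambda,\mu,S\ltimes M,(1,0))$ yourself, renaming the bimodule to $N$ to avoid a clash with $M$, whereas the paper simply cites Chen--Xi for it.
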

\begin{proof}
Let \( T := R \ltimes M, \mu : R \to T \) be the inclusion. Then the pair \( (\lambda, \mu) \) is exact (see \cite{CX} for details),
so \( T \boxtimes_R S \simeq S \ltimes M \) as rings. As \( \operatorname{Tor}_i^R (T, S)\cong \operatorname{Tor}_i^R (M, S) = 0 \) for all \( i \geq 1 \), the statements follows from Corollary \ref{lem7.5}.
\end{proof}

The next theorem provides analogous inequalities for global dimensions.

\begin{thm}\label{lem6.2} Given a recollement $(\star)$ of compactly generated triangulated categories \( \mathcal{R}, \mathcal{S} \) and \( \mathcal{T} \) with silting compact generators $M_\mathcal{R},M_\mathcal{S},M_\mathcal{T}$.

$(a)$ $\operatorname{gpd}(\mathcal{S},M_\mathcal{S})\leq\operatorname{gpd}(\mathcal{R},M_\mathcal{R})+
\operatorname{gpd}(\mathcal{T},M_\mathcal{T})-\mathrm{inf}M_\mathcal{S}+
\mathrm{projdim}_\mathcal{S}i_\ast(M_\mathcal{R})+\mathrm{sup}i_\ast(M_\mathcal{R})
+\mathrm{projdim}_\mathcal{S}j_!(M_\mathcal{T})+\mathrm{sup}j_!(M_\mathcal{T})
+1$.

$(b)$ If  $M_\mathcal{S}\in \mathcal{S}^{\mathrm{b}}$, then $\operatorname{gpd}(\mathcal{R},M_\mathcal{R})\leq\operatorname{gpd}(\mathcal{S},M_\mathcal{S})
+\mathrm{sup}i_*(M_\mathcal{R})+\mathrm{sup}i^*(M_{\mathcal{S}})$.

$(c)$ If  $M_\mathcal{R}\in \mathcal{R}^{\mathrm{b}}$, then $\operatorname{gpd}(\mathcal{T},M_\mathcal{T})\leq
\operatorname{gpd}(\mathcal{S},M_\mathcal{S})+\mathrm{sup}j_\ast(M_\mathcal{T})+\mathrm{sup}j^\ast(M_\mathcal{S})$.
\end{thm}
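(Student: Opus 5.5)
Part (a) follows the proof of Theorem \ref{lem6.1}(1)(i) almost word for word, with $\mathrm{FPD}$ replaced by $\mathrm{gpd}$. By Remark \ref{lem:2.2}(4), it suffices to bound $\mathrm{projdim}_\mathcal{S}Y$ for $Y\in\mathcal{S}^\mathrm{b}\cap\mathcal{S}^{\geq0}$; no finiteness of $\mathrm{projdim}_\mathcal{S}Y$ is needed in advance. We may assume $m:=\mathrm{gpd}(\mathcal{R},M_\mathcal{R})<\infty$, $n:=\mathrm{gpd}(\mathcal{T},M_\mathcal{T})<\infty$ and $M_\mathcal{S}\in\mathcal{S}^\mathrm{b}$, since otherwise the right-hand side is $+\infty$.

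The constants $a=\mathrm{projdim}_\mathcal{S}i_*(M_\mathcal{R})$ and $b=\mathrm{projdim}_\mathcal{S}j_!(M_\mathcal{T})$ control $\sup i^!(P)$ and $\sup j^!(P)$ for $P\in\mathcal{P}_\mathcal{S}$, exactly via the adjunction isomorphisms displayed in the proof of Theorem \ref{lem6.1}.

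Apply the triangle $(\sharp)$ $j_!j^!(Y)\to Y\to i_*i^*(Y)\to$.
\begin{enumerate}[(i)]
\item Adjunction gives $\mathrm{Hom}(M_\mathcal{T}[p],j^!Y)\cong\mathrm{Hom}(j_!M_\mathcal{T}[p],Y)=0$ for $p>\sup j_!(M_\mathcal{T})$. So $j^!(Y)\in\mathcal{T}^{\geq-\sup j_!(M_\mathcal{T})}$. It is also bounded, because $\mathrm{Hom}(M_\mathcal{T},j^!Y[t])\cong\mathrm{Hom}(j_!M_\mathcal{T},Y[t])$ and $j_!M_\mathcal{T}$ is compact, hence in $\mathcal{S}^-$.
\item Hence $\mathrm{projdim}_\mathcal{T}j^!Y\le n+\sup j_!(M_\mathcal{T})$.
\item Adjunction then gives $\mathrm{projdim}_\mathcal{S}j_!j^!Y\le n+b+\sup j_!(M_\mathcal{T})$, by Proposition \ref{lem2.3}, testing against $P[t]$ with $P\in\mathcal{P}_\mathcal{S}$.
\item Remark \ref{lem:2.1}(2) bounds $\inf j_!j^!Y$ from below by $\inf M_\mathcal{S}-n-b-\sup j_!(M_\mathcal{T})$.
\item From $(\sharp)$ we get the corresponding lower bound on $\inf i_*i^*Y$, one less.
\item Since $\mathrm{Hom}(M_\mathcal{R}[p],i^*Y)\cong\mathrm{Hom}(i_*M_\mathcal{R}[p],i_*i^*Y)$, this yields a lower bound on $\inf i^*Y$, after also using $\sup i_*(M_\mathcal{R})$.
\end{enumerate}

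One point needs checking here: that $i^*Y\in\mathcal{R}^\mathrm{b}$. Its upper bound comes from $\mathrm{Hom}(M_\mathcal{R},i^*Y[t])$ via the triangle, since both $Y$ and $j_!j^!Y$ are in $\mathcal{S}^-$. Then $\mathrm{projdim}_\mathcal{R}i^*Y\le m-\inf i^*Y$. Transporting back through $\mathrm{Hom}(i_*i^*Y,P[t])\cong\mathrm{Hom}(i^*Y,i^!P[t])$ adds $a$, which requires $i^!P\in\mathcal{R}^-$ with $\sup\le a$. Finally, $(\sharp)$ gives $\mathrm{projdim}_\mathcal{S}Y\le\max$ of the two bounds, and this is the stated constant.

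Part (b). Let $H\in\mathcal{H}_{M_\mathcal{R}}$. Then $i_*H\in\mathcal{S}^\mathrm{b}$. Indeed, $\mathrm{Hom}(M_\mathcal{S},i_*H[t])\cong\mathrm{Hom}(i^*M_\mathcal{S},H[t])$ vanishes for $t<-\sup i^*(M_\mathcal{S})$ and for $t>0$ (as $i^*M_\mathcal{S}$ is compact in $\mathcal{R}$, hence in $\mathcal{R}^{\le \sup}$ with $\inf$ possibly needed). So $\inf i_*H\ge-\sup i^*(M_\mathcal{S})$. Thus $\mathrm{projdim}_\mathcal{S}i_*H\le\mathrm{gpd}(\mathcal{S})-\inf i_*H\le \mathrm{gpd}(\mathcal{S})+\sup i^*(M_\mathcal{S})$, using the reformulation $\mathrm{gpd}=\sup\{\mathrm{projdim}X+\inf X\mid X\in\mathcal{T}^\mathrm{b}\}$, proved like the analogous FPD identity. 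By full faithfulness, $\mathrm{Hom}_\mathcal{R}(H,Q[t])\cong\mathrm{Hom}_\mathcal{S}(i_*H,i_*Q[t])$, and this vanishes once $t>\mathrm{projdim}_\mathcal{S}i_*H+\sup i_*(M_\mathcal{R})$. Testing only against $\mathcal{P}_\mathcal{R}$ is legitimate because $\mathrm{projdim}_\mathcal{R}H<\infty$ is not yet known, so I would instead test against $\mathcal{H}_{M_\mathcal{R}}[t]$. For $H'\in\mathcal{H}_{M_\mathcal{R}}$, $\sup i_*H'\le\sup i_*(M_\mathcal{R})$ holds, since $\mathrm{Hom}(M_\mathcal{S}[p],i_*H')\cong\mathrm{Hom}(i^*M_\mathcal{S}[p],H')$ and $i^*M_\mathcal{S}\in\mathcal{R}^{\ge?}$. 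This is where $M_\mathcal{S}\in\mathcal{S}^\mathrm{b}$ enters: $\mathrm{Hom}(i^*M_\mathcal{S},M_\mathcal{R}[t])\cong\mathrm{Hom}(M_\mathcal{S},i_*M_\mathcal{R}[t])$, so $i^*M_\mathcal{S}$ has projdim at most $\sup i_*(M_\mathcal{R})$, and Remark \ref{lem:2.1}(2) applies. I expect this bookkeeping — replacing $\mathcal{P}_\mathcal{R}$ by $\mathcal{H}$ and controlling $\sup i_*H'$ — to be the main obstacle.

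Part (c) is symmetric, using the right adjoint pair $(j^*,j_*)$. For $H\in\mathcal{H}_{M_\mathcal{T}}$ and $H'\in\mathcal{H}_{M_\mathcal{T}}$, full faithfulness of $j_*$ gives $\mathrm{Hom}_\mathcal{T}(H,H'[t])\cong\mathrm{Hom}_\mathcal{S}(j_*H,j_*H'[t])$. Here $j_*H$ is bounded, with $\inf j_*H\ge-\sup j^*(M_\mathcal{S})$ by adjunction, and $\sup j_*H'\le\sup j_*(M_\mathcal{T})$. The latter uses $M_\mathcal{R}\in\mathcal{R}^\mathrm{b}$ to make $j_*$ preserve boundedness through the triangle $i_*i^!\to\mathrm{id}\to j_*j^*$. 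This gives $\mathrm{Hom}(H,H'[t])=0$ for $t>\mathrm{gpd}(\mathcal{S})+\sup j^*(M_\mathcal{S})+\sup j_*(M_\mathcal{T})$, and Proposition \ref{lem2.3} concludes.
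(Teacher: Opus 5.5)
\textbf{The gap is in part (c).} Part (c) is not symmetric to (b). Your step $\sup j_*(H')\le\sup j_*(M_\mathcal{T})$ for all $H'\in\mathcal{H}_{M_\mathcal{T}}$ does not follow from boundedness of $j_*$, and it is false in general.

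By adjunction, $\mathrm{Hom}_\mathcal{S}(M_\mathcal{S},j_*(H')[\ell])\cong\mathrm{Hom}_\mathcal{T}(j^*(M_\mathcal{S}),H'[\ell])$. So Proposition \ref{lem2.3} shows that the best uniform bound is $\sup_{H'}\sup j_*(H')=\mathrm{projdim}_\mathcal{T}j^*(M_\mathcal{S})$. When this is finite it equals $\sup\{\sup j_*(P)\mid P\in\mathrm{Add}(M_\mathcal{T})\}$. It agrees with $\sup j_*(M_\mathcal{T})$ when $j^*(M_\mathcal{S})$ is compact, for example when $j_*$ preserves coproducts (equivalently $i_*(M_\mathcal{R})\in\mathcal{S}^c$), but not otherwise. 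The triangle $i_*i^!\to\mathrm{id}\to j_*j^*$ together with $M_\mathcal{R}\in\mathcal{R}^{\mathrm{b}}$ can only show that it is finite.

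Here is a concrete example. Take $\Lambda=\left(\begin{smallmatrix}\mathbb{Z}_p&\mathbb{Q}_p\\ 0&\mathbb{Z}_p\end{smallmatrix}\right)$ and $f=e_{22}$. Since $f\Lambda\cong\mathbb{Z}_p$ is free over $f\Lambda f$, the ideal $\Lambda f\Lambda$ is stratifying. It gives a recollement $\mathrm{D}(\mathbb{Z}_p)\to\mathrm{D}(\Lambda)\to\mathrm{D}(\mathbb{Z}_p)$ satisfying all hypotheses of (c), with $j_*=\mathbf{R}\mathrm{Hom}_{\mathbb{Z}_p}(\mathbb{Q}_p\oplus\mathbb{Z}_p,-)$.
\begin{itemize}
\item Since $\mathbb{Z}_p$ is pure-injective, $\mathrm{Ext}^1_{\mathbb{Z}_p}(\mathbb{Q}_p,\mathbb{Z}_p)=0$, so $\sup j_*(\mathbb{Z}_p)=0$.
\item But $\mathrm{Ext}^1_{\mathbb{Z}_p}(\mathbb{Q}_p,\mathbb{Z}_p^{(\mathbb{N})})\neq0$: a map $\mathbb{Q}_p\to(\mathbb{Q}_p/\mathbb{Z}_p)^{(\mathbb{N})}$ whose image meets infinitely many summands cannot lift to $\mathbb{Q}_p^{(\mathbb{N})}$. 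Hence $\sup j_*(\mathbb{Z}_p^{(\mathbb{N})})=1$.
\end{itemize}
This does not contradict (c) itself, but it kills your intermediate inequality. So your argument proves (c) only with $\sup j_*(M_\mathcal{T})$ replaced by $\mathrm{projdim}_\mathcal{T}j^*(M_\mathcal{S})$, or as stated under the extra hypothesis $i_*(M_\mathcal{R})\in\mathcal{S}^c$.

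The paper's proof of (c) does not supply this uniformity either. It tests $Z$ only against $M_\mathcal{T}$, which determines $\mathrm{projdim}_\mathcal{T}Z$ only for pseudo-compact $Z$ (Lemma \ref{lem4.8}). In $\mathrm{D}(\mathbb{Z}_p)$, $\mathrm{Hom}(\mathbb{Q}_p,\mathbb{Z}_p[n])=0$ for all $n$, although $\mathrm{projdim}\,\mathbb{Q}_p=1$.

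\textbf{Parts (a) and (b) are fine, with two repairs.}

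For (a) you follow the paper. The paper first shows that every object of $\mathcal{S}^{\mathrm{b}}$ has finite projective dimension, so $\mathrm{gpd}=\mathrm{FPD}$, and then quotes the chain in Theorem \ref{lem6.1}(1)(i); you run that chain directly. Because finiteness is not known in advance, test against all $U\in\mathcal{S}^-$ rather than only against $P\in\mathcal{P}_\mathcal{S}$. The same adjunctions give $\sup j^!(U)\le\sup U+\mathrm{projdim}_\mathcal{S}j_!(M_\mathcal{T})$ and $\sup i^!(U)\le\sup U+\mathrm{projdim}_\mathcal{S}i_*(M_\mathcal{R})$. Likewise, $j^!(Y)\in\mathcal{T}^-$ comes from $\mathrm{projdim}_\mathcal{S}j_!(M_\mathcal{T})<\infty$, not merely from $j_!(M_\mathcal{T})\in\mathcal{S}^-$.

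In (b) your bound $\sup i_*(H')\le\sup i_*(M_\mathcal{R})$ is correct, but the reason is compactness, not Remark \ref{lem:2.1}(2) or $M_\mathcal{S}\in\mathcal{S}^{\mathrm{b}}$.
\begin{itemize}
\item Since $i_*$ has the right adjoint $i^!$, the object $i^*(M_\mathcal{S})$ is compact, hence of finite projective dimension.
\item Because $\mathrm{Hom}(i^*(M_\mathcal{S}),-)$ commutes with coproducts, the last clause of Proposition \ref{lem2.3} gives $\mathrm{projdim}_\mathcal{R}i^*(M_\mathcal{S})=\sup\{n\mid\mathrm{Hom}_\mathcal{R}(i^*(M_\mathcal{S}),M_\mathcal{R}[n])\neq0\}=\sup i_*(M_\mathcal{R})$.
\item Proposition \ref{lem2.3}(3) then gives $\mathrm{Hom}_\mathcal{S}(M_\mathcal{S},i_*(H')[\ell])\cong\mathrm{Hom}_\mathcal{R}(i^*(M_\mathcal{S}),H'[\ell])=0$ for $\ell>\sup i_*(M_\mathcal{R})$.
\end{itemize}
Also, $\mathrm{Hom}(i^*(M_\mathcal{S}),H[t])$ vanishes for $t>\mathrm{projdim}_\mathcal{R}i^*(M_\mathcal{S})$, not for all $t>0$. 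This compactness is exactly what is unavailable for $j^*(M_\mathcal{S})$ in (c).
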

\begin{proof} $(a)$ We may assume that $\operatorname{gpd}(\mathcal{R},M_\mathcal{R})<\infty$ and $\operatorname{gpd}(\mathcal{T},M_\mathcal{T})<\infty$ and $M_\mathcal{S}\in\mathcal{S}^\mathrm{b}$. Then $M_\mathcal{T}\in \mathcal{T}^{\mathrm{b}}$ by
\cite[Lemma 3.3]{CCZ}, and so $j^\ast(M_\mathcal{S})\in\mathcal{T}^{\mathrm{b}}=\mathcal{T}^{\mathrm{sb}}$ by \cite[Corollary 3.12]{BNP} and Remark \ref{lem:2.2}(4). Hence \cite[Theorem 3.7]{CCZ} implies that $i_*(M_\mathcal{R})\in \mathcal{S}^{\mathrm{sb}}$. Let $Y \in \mathcal{S}^\mathrm{b} \cap \mathcal{S}^{\geq 0}$. Then $j^!(Y)\in \mathcal{T}^{\mathrm{b}}$, and so $j^!(Y)\in \mathcal{T}^{\mathrm{sb}}$. Consequently,
 $j_!j^!(Y)\in\mathcal{S}^{\mathrm{sb}}\subseteq\mathcal{S}^{\mathrm{b}}$
by \cite[Proposition 3.4]{CCZ}, and hence $i_*i^*(Y)\in\mathcal{S}^{\mathrm{b}}$. Now, by analogy with the proof of Theorem \ref{lem6.1}(i), one has $\operatorname{projdim}_\mathcal{S}Y<\infty$, so $\operatorname{gpd}(\mathcal{S},M_\mathcal{S})=\mathrm{FPD}(\mathcal{S},M_\mathcal{S})$, as claimed.

$(b)$  We may assume that $\operatorname{gpd}(\mathcal{S},M_\mathcal{S})<\infty$.
Let $X\in \mathcal{R}^\mathrm{b} \cap \mathcal{R}^{\geq 0}$. Then $i_\ast(X)\in \mathcal{S}^{\mathrm{b}}=\mathcal{S}^{\mathrm{sb}}$ by Remark \ref{lem:2.2}(4) as $M_\mathcal{S}\in \mathcal{S}^{\mathrm{b}}$. An analogous argument to that used in the proof of Theorem \ref{lem6.1}(ii), one has $\mathrm{projdim}_\mathcal{R}X<\infty$, it yields the inequality in (b).

$(c)$ We may assume that $\operatorname{gpd}(\mathcal{S},M_\mathcal{S})<\infty$. As  $M_\mathcal{R}\in \mathcal{R}^{\mathrm{b}}$, $i_\ast(M_\mathcal{R})\in\mathcal{S}^{\mathrm{sb}}$ by \cite[Corollary 3.12]{BNP} and Remark \ref{lem:2.2}(4). Let $Z\in \mathcal{T}^\mathrm{b} \cap \mathcal{T}^{\geq 0}$. Then $j_\ast(Z)\in \mathcal{S}^\mathrm{b}$ by \cite[Theorem 3.7]{CCZ} and $\mathrm{projdim}_\mathcal{S}j_\ast(Z)<\infty$. Since $\text{Hom}_\mathcal{S}(M_\mathcal{S},j_\ast(Z)[t])\cong
\text{Hom}_{\mathcal{T}}(j^\ast(M_\mathcal{S}),Z[t])= 0 \text{ for } t <\mathrm{inf}Z-\mathrm{sup}j^\ast(M_\mathcal{S})$, $\mathrm{inf}j_\ast(Z)\geq-\mathrm{sup}j^\ast(M_\mathcal{S})$.
As $j_\ast$ is fully faithful,
$$\text{Hom}_{\mathcal{T}}(Z,M_\mathcal{T}[t]) \cong \text{Hom}_\mathcal{S}(j_\ast(Z),j_\ast(M_\mathcal{T})[t]) = 0\ \textrm{for}\ t >\mathrm{projdim}_\mathcal{S}j_\ast(Z)+\mathrm{sup}j_\ast(M_\mathcal{T}),$$ it yields that $\mathrm{projdim}_\mathcal{T}Z\leq\mathrm{projdim}_\mathcal{S}j_\ast(Z)+\mathrm{sup}j_\ast(M_\mathcal{T})
\leq \operatorname{gpd}(\mathcal{S},M_\mathcal{S})-\mathrm{inf}j_\ast(Z)
+\mathrm{sup}j_\ast(M_{\mathcal{T}})\leq
\operatorname{gpd}(\mathcal{S},M_\mathcal{S})+
\mathrm{sup}j_\ast(M_\mathcal{T})+\mathrm{sup}j^\ast(M_\mathcal{S})$, as required.
\end{proof}

\begin{cor}\label{lem8.7}  Given a recollement $(\star)$ of compactly generated triangulated categories \( \mathcal{R}, \mathcal{S} \) and \( \mathcal{T} \) with silting compact generators $M_\mathcal{R},M_\mathcal{S},M_\mathcal{T}$.
Let $M_\mathcal{R}\in \mathcal{R}^{\mathrm{b}}$ and $M_\mathcal{S}\in \mathcal{S}^{\mathrm{b}}$. Then $$\operatorname{gpd}(\mathcal{S},M_\mathcal{S})<\infty\Longleftrightarrow \operatorname{gpd}(\mathcal{R},M_\mathcal{R})<\infty\ \textrm{and}\ \operatorname{gpd}(\mathcal{T},M_\mathcal{T})<\infty.$$
\end{cor}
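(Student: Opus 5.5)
The equivalence will follow from Theorem \ref{lem6.2}; the work is to check that every constant appearing there is finite under the hypotheses $M_\mathcal{R}\in\mathcal{R}^{\mathrm{b}}$ and $M_\mathcal{S}\in\mathcal{S}^{\mathrm{b}}$.

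For ``$\Leftarrow$'' we use Theorem \ref{lem6.2}(a). Since $M_\mathcal{S}\in\mathcal{S}^{\mathrm{b}}$, we have $\mathrm{inf}M_\mathcal{S}>-\infty$. The objects $j_!(M_\mathcal{T})$ and $i_*(M_\mathcal{R})$ must have finite sup and finite projective dimension.
\begin{itemize}
\item $j_!(M_\mathcal{T})$ is compact, because $j_!$ preserves compacts (its right adjoint $j^!$ has a right adjoint $j_*$, so $j^!$ preserves coproducts). Hence $j_!(M_\mathcal{T})\in\mathcal{S}^c\subseteq\mathcal{S}^-$, and by Proposition \ref{lem2.5}, equivalently Remark \ref{lem:2.2}(1), it has finite projective dimension and finite sup.
\item For $i_*(M_\mathcal{R})$, the hypothesis $M_\mathcal{R}\in\mathcal{R}^{\mathrm{b}}$ gives $i_*(M_\mathcal{R})\in\mathcal{S}^{\mathrm{sb}}$, by \cite[Corollary 3.12]{BNP} and Remark \ref{lem:2.2}(4), exactly as in the proof of Theorem \ref{lem6.2}(c). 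By Remark \ref{lem:2.2}(1), $\mathcal{S}^{\mathrm{sb}}$ consists of objects of $\mathcal{S}^-$ with finite projective dimension. Hence $\mathrm{projdim}_\mathcal{S}i_*(M_\mathcal{R})<\infty$ and $\mathrm{sup}\,i_*(M_\mathcal{R})<\infty$.
\end{itemize}
So every constant in (a) is finite, and finiteness of $\mathrm{gpd}$ for $\mathcal{R}$ and $\mathcal{T}$ gives finiteness for $\mathcal{S}$.

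For ``$\Rightarrow$'' we use parts (b) and (c), which apply because $M_\mathcal{S}\in\mathcal{S}^{\mathrm{b}}$ and $M_\mathcal{R}\in\mathcal{R}^{\mathrm{b}}$ respectively. We need four finiteness facts.
\begin{itemize}
\item $\mathrm{sup}\,i_*(M_\mathcal{R})<\infty$ was shown above.
\item $i^*(M_\mathcal{S})$ is compact, since $i^*$ has the coproduct-preserving right adjoint $i_*$. So $i^*(M_\mathcal{S})\in\mathcal{R}^-$ and $\mathrm{sup}\,i^*(M_\mathcal{S})<\infty$.
\item $j^*(M_\mathcal{S})$ is compact, since $j^*=j^!$ has the coproduct-preserving right adjoint $j_*$. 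So $\mathrm{sup}\,j^*(M_\mathcal{S})<\infty$.
\item The remaining term is $\mathrm{sup}\,j_*(M_\mathcal{T})$, and this is the step I expect to be the main obstacle. First, $M_\mathcal{T}\in\mathcal{T}^{\mathrm{b}}$ by \cite[Lemma 3.3]{CCZ}, using $M_\mathcal{S}\in\mathcal{S}^{\mathrm{b}}$ as in the proof of (a). Next, by Lemma \ref{lem0.2},
\[\mathrm{Hom}_\mathcal{S}(M_\mathcal{S}[p],j_*(M_\mathcal{T}))\cong\mathrm{Hom}_\mathcal{T}(j^*(M_\mathcal{S})[p],M_\mathcal{T}).\]
Since $j^*(M_\mathcal{S})$ is compact, it lies in $\mathcal{T}^{\leq s}$ for some $s$, while $M_\mathcal{T}\in\mathcal{T}^{\geq \mathrm{inf}M_\mathcal{T}}$. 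So the Hom vanishes for $p\gg0$, which gives $\mathrm{sup}\,j_*(M_\mathcal{T})<\infty$. Alternatively, \cite[Theorem 3.7]{CCZ} says that $j_*$ maps $\mathcal{T}^{\mathrm{b}}$ into $\mathcal{S}^{\mathrm{b}}$, which also gives the bound.
\end{itemize}
With these, (b) and (c) show that finiteness of $\mathrm{gpd}(\mathcal{S},M_\mathcal{S})$ gives finiteness of $\mathrm{gpd}(\mathcal{R},M_\mathcal{R})$ and $\mathrm{gpd}(\mathcal{T},M_\mathcal{T})$.
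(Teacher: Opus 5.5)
Your overall route is the intended one. The paper gives no separate argument and reads the corollary off Theorem \ref{lem6.2}(a)--(c), with finiteness of the constants supplied inside those proofs. However, the finiteness check that carries the real content in the direction ``$\Leftarrow$'' is circular. To get $\mathrm{projdim}_\mathcal{S}i_*(M_\mathcal{R})<\infty$ you import the step from the proof of Theorem \ref{lem6.2}(c). There, the passage from $i_*(M_\mathcal{R})\in\mathcal{S}^{\mathrm{b}}$ to $i_*(M_\mathcal{R})\in\mathcal{S}^{\mathrm{sb}}$ via Remark \ref{lem:2.2}(4) uses $\mathrm{gpd}(\mathcal{S},M_\mathcal{S})<\infty$, which is assumed at the start of that proof and is exactly what you are proving here.

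Boundedness of $M_\mathcal{R}$ alone cannot replace it. Take $A=k[x]/(x^2)$, $S=\begin{pmatrix}A&k\\0&k\end{pmatrix}$ and $e=e_{11}$. The ideal $SeS$ is stratifying, so there is a recollement of $\mathrm{D}(S)$ by $\mathrm{D}(k)$ and $\mathrm{D}(A)$ in which all generators are bounded. Yet $\mathrm{projdim}_S\,i_*(k)=\mathrm{projdim}_S(S/SeS)=1+\mathrm{projdim}_Ak=\infty$.

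The input you need is $\mathrm{gpd}(\mathcal{T},M_\mathcal{T})<\infty$, as in the paper's proof of (a). By \cite[Lemma 3.3]{CCZ}, $M_\mathcal{T}\in\mathcal{T}^{\mathrm{b}}$, hence $j^*(M_\mathcal{S})\in\mathcal{T}^{\mathrm{b}}=\mathcal{T}^{\mathrm{sb}}$, and \cite[Theorem 3.7]{CCZ} then gives $i_*(M_\mathcal{R})\in\mathcal{S}^{\mathrm{sb}}$. Directly: $j_!j^*(M_\mathcal{S})\in\mathcal{S}^{\mathrm{sb}}$ because $j_!(M_\mathcal{T})$ is compact. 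The triangle $j_!j^*(M_\mathcal{S})\to M_\mathcal{S}\to i_*i^*(M_\mathcal{S})\to$ then places $i_*i^*(M_\mathcal{S})$ in the thick subcategory $\mathcal{S}^{\mathrm{sb}}$. Finally $M_\mathcal{R}\in\mathrm{thick}(i^*(M_\mathcal{S}))$, since $i^*(M_\mathcal{S})$ is a compact generator of $\mathcal{R}$. The hypothesis $M_\mathcal{R}\in\mathcal{R}^{\mathrm{b}}$ plays no role in this direction; it is needed only for (c).

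In the direction ``$\Rightarrow$'' two further justifications are wrong, although the conclusions can be rescued.

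First, $j^*(M_\mathcal{S})$ need not be compact. $j_*$ is merely a right adjoint, and it preserves coproducts if and only if $i_*$ preserves compact objects. In the example above, $j^*(S)=eS\cong A\oplus k$ is not perfect over $A$. The bound $\mathrm{sup}\,j^*(M_\mathcal{S})<\infty$ comes instead from $\mathrm{Hom}_\mathcal{T}(M_\mathcal{T}[-\ell],j^*(M_\mathcal{S}))\cong\mathrm{Hom}_\mathcal{S}(j_!(M_\mathcal{T}),M_\mathcal{S}[\ell])=0$ for $\ell\gg0$, since $j_!(M_\mathcal{T})$ is compact and $M_\mathcal{S}$ is silting. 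This is the constant $b=\mathrm{projdim}_\mathcal{S}j_!(M_\mathcal{T})$ in the proof of Theorem \ref{lem6.1}.

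Second, your main argument for $\mathrm{sup}\,j_*(M_\mathcal{T})<\infty$ bounds the wrong end. By Lemma \ref{lem0.2}, vanishing of $\mathrm{Hom}_\mathcal{S}(M_\mathcal{S}[p],j_*(M_\mathcal{T}))$ for $p\gg0$ says $\mathcal{H}^{-p}_{M_\mathcal{S}}(j_*(M_\mathcal{T}))=0$ for $p\gg0$, i.e.\ $\mathrm{inf}\,j_*(M_\mathcal{T})>-\infty$. Your alternative via \cite[Theorem 3.7]{CCZ} is the right one, but you must check its hypothesis $i_*(M_\mathcal{R})\in\mathcal{S}^{\mathrm{sb}}$. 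In this direction it does hold, by \cite[Corollary 3.12]{BNP} and Remark \ref{lem:2.2}(4), since now $\mathrm{gpd}(\mathcal{S},M_\mathcal{S})<\infty$ and $M_\mathcal{R}\in\mathcal{R}^{\mathrm{b}}$; this is where your borrowed step from (c) is legitimate. Together with $M_\mathcal{T}\in\mathcal{T}^{\mathrm{b}}$, it gives $j_*(M_\mathcal{T})\in\mathcal{S}^{\mathrm{b}}$, exactly as in the paper's proof of (c).
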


\bigskip \centerline {\bf Acknowledgements}
This research was partially supported by National Natural Science Foundation of China (12571035, 12671053).

\end{document}